\newcommand{\bs}{\boldsymbol}
\newcommand{\1}{\mathbbm 1}
\renewcommand{\eqdef}{:=}
\newcommand{\NN}{\mathbb N}
\newcommand{\RR}{\mathbb R}
\newcommand{\PP}{\mathbb P}
\newcommand{\EE}{\mathbb E}
\newcommand{\SSS}{\mathbb S}
\newcommand{\mC}{{\mathcal C}}
\newcommand{\mK}{{\mathcal K}}
\newcommand{\ps}[1]{\langle #1 \rangle}
\newcommand{\mcB}{{\mathscr B}}
\newcommand{\mcT}{{\mathscr T}}
\newcommand{\mcF}{{\mathscr F}}
\newcommand{\sH}{{\mathsf H}}
\newcommand{\sx}{{\mathsf x}}
\newcommand{\sy}{{\mathsf y}}
\newcommand{\sX}{{\mathsf X}}
\newcommand{\Pg}{P_\gamma}
\theoremstyle{plain} 
\newtheorem{theorem}{Theorem}
\newtheorem{lemma}{Lemma}
\newtheorem{definition}{Definition}
\newtheorem{proposition}{Proposition}
\newtheorem{assumption}{Assumption}
\newtheorem{example}{Example}
\newcommand{\neutralize}[1]{\expandafter\let\csname c@#1\endcsname\count@}
\DeclareMathOperator*{\support}{supp}
\DeclareMathOperator{\zer}{zer}
\DeclareMathOperator{\inter}{int}
\DeclareMathOperator{\cl}{cl}
\DeclareMathOperator{\dist}{dist}
\title{Convergence of constant step stochastic gradient descent \\
   for non-smooth non-convex functions}
\author[1]{Pascal Bianchi}
\author[2]{Walid Hachem}
\author[2]{Sholom Schechtman}
\affil[1]{LTCI, Telecom Paris, IP Paris, France.}
\affil[2]{LIGM, CNRS, Univ Gustave Eiffel, ESIEE Paris, F-77454 Marne-la-Vall\'ee, France.}
\date{January 2021}
\begin{document}

\maketitle

\begin{abstract}
  This paper studies the asymptotic behavior
  of the constant step Stochastic Gradient Descent for the minimization
  of an unknown function, defined as the expectation of a
  non convex, non smooth, locally Lipschitz random function.
  As the gradient may not exist, it is replaced by a certain operator:
  a reasonable choice is to use an element of the Clarke subdifferential of the random function;
  another choice is the output of the celebrated backpropagation algorithm, which is popular
  amongst practioners, and whose properties have recently
  been studied by Bolte and Pauwels.
  Since the expectation of the chosen operator is not in general an element of the Clarke subdifferential
  of the mean function,
  it has been assumed in the literature that an oracle of the Clarke subdifferential of the mean function
  is available. As a first result, it is shown
  in this paper that such an oracle is not needed for almost all initialization
  points of the algorithm.
  Next, in the small step size regime, it is shown that  the
  interpolated trajectory of the algorithm converges in probability (in the compact convergence sense)
  towards the set of solutions of a particular differential inclusion: the subgradient flow.
  Finally, viewing the iterates as a Markov chain whose transition kernel is indexed by the
  step size, it is shown that the invariant distribution of the kernel converge weakly
  to the set of invariant distribution of this differential inclusion as the step size tends to zero.
  These results show that when the step size is small, with large probability, the
  iterates eventually lie in a neighborhood of the critical points of the
  mean function.

%

\end{abstract}

{\bf Keywords: }
Clarke subdifferential, Backpropagation algorithm,
Differential inclusions,
Non convex and non smooth optimization,
Stochastic approximation.

\section{Introduction}
\label{intro}

In this work, we study the asymptotic behavior of the constant step
Stochastic Gradient Descent (SGD) when the objective function is
neither differentiable nor convex.  Given an integer $d\geq 1$ and a
probability space $(\Xi, \mcT, \mu)$, let
$f : \RR^d \times \Xi \to \RR, (x,s) \mapsto f(x,s)$ be a function
which is assumed to be locally Lipschitz, generally non-differentiable
and non-convex in the variable $x$, and $\mu$-integrable in the
variable $s$. The goal is to find a local minimum, or at least a
critical point of the function
$F(x) = \int f(x, s) \, \mu(ds) = \EE f(x, \cdot)$, \emph{i.e.}, a
point $x_\star$ such that $0 \in \partial F(x_\star)$, where
$\partial F$ is the so-called Clarke subdifferential of $F$.  It is
assumed that the function $f$ is available to the observer along with
a sequence of independent $\Xi$-valued random variables
$(\xi_k)_{k\in\NN}$ on some probability space with the same
probability law $\mu$.  The function $F$ itself is assumed unknown due
to, \emph{e.g.}, the difficulty of computing the integral
$\EE f(x,\cdot)$.  Such non-smooth and non-convex problems are
frequently encountered in the field of statistical learning. For instance this
type of problem arises in the study of neural networks when the activation function
is non-smooth, which is the case of the commonly used ReLU function.

We establish the weak convergence of SGD to the set of (Clarke) critical points of $F$.
Our main contributions are:
\begin{itemize}
\item We investigate the constant step size regime, whereas most works address the vanishing step size regime.
\item We study an \emph{oracle-free} version of SGD, which does not require to have access to the
  Clarke subgradient of the unknown function $F$.
\end{itemize}
To that end, our main hypotheses is that the function $F$ is \emph{Whitney stratifiable}.
We also need to posit that the sequence of iterates is \emph{bounded in probability}.
Boundedness assumptions are quite standard in stochastic approximation, we nevertheless provide sufficient conditions:
first, it holds when $F$ is assumed coercive and smooth outside an arbitrary compact set; second, it naturally holds
in the case of \emph{projected SGD} \emph{i.e.}, when the iterates are projected onto some compact set.
The convergence of the projected SGD is as well addressed in the paper.

We say that a sequence of random variables $(x_n)_{n\in \NN}$ on $\RR^d$
is a \emph{SGD sequence} with step size $\gamma>0$ if, with probability one,
\begin{equation}
  \label{eq:sgd}
  x_{n+1} = x_n - \gamma\nabla f(x_n,\xi_{n+1})
\end{equation}
for every $n$ such that the function $f(\cdot,\xi_{n+1})$ is
differentiable at point $x_n$, where $\nabla f(x_n,\xi_{n+1})$
represents the gradient w.r.t. the variable $x_n$. When
$f(\cdot,\xi_{n+1})$ is non-differentiable at  $x_n$, the
update equation $x_n\to x_{n+1}$ is left undefined.
The practioner is free to choose the value of $x_{n+1}$ according to a
predetermined selection policy. Typically, a reasonable choice
is to select $x_{n+1}$ in the set $x_n-\gamma \partial
f(x_n,\xi_{n+1})$, where $\partial f(x, s)$ represents the Clarke
subdifferential of the function $f(\cdot, s)$ at the point $x$.  When
such a policy is used, the resulting sequence will be referred to as a
\emph{Clarke-SGD} sequence.
In fact, our study extends to the case where $x_{n+1}$ is chosen in the set $x_n - \gamma G_{f(\cdot, \xi_{n+1})}$,
where $G_{f(\cdot, \xi_{n+1})}$ is a generalized subdifferential of $f(\cdot, \xi_{n+1})$
in Norkin's sense \cite{nor_gen80} (we refer to such a sequence as a \emph{Norkin-SGD} sequence).
The Clarke subdifferential is a special case of generalized subdifferential.

An alternative used by practioners is to
compute the derivative using the automatic differentiation provided in popular
API's such as Tensorflow, PyTorch, etc. \emph{i.e.}, for all $n$,
\begin{equation}
x_{n+1} = x_n - \gamma a_{f(\cdot,\xi_{n+1})}(x_n)\label{eq:autograd}
\end{equation}
where $a_h$ stands for the output of the automatic differentiation applied to a function $h$.
We refer to such a sequence as an \emph{autograd} sequence.
This approach is useful when $f(\cdot,s)$ is a composition of matrix multiplications and
non-linear activation functions, of the form
\begin{equation}
\label{cnn}
f(x,s) =
  \ell(\sigma_L(W_L\sigma_{L-1}(W_{L-1} \cdots \sigma_1(W_1 X_s))),Y_s)\,,
\end{equation}
where $x=(W_1,\cdots,W_L)$ are the weights of the network represented
by a finite sequence of $L$ matrices, $\sigma_1,\cdots,\sigma_L$ are
vector-valued functions, $X_s$ is a feature vector, $Y_s$ is a label
and $\ell(\cdot,\cdot)$ is some loss function.  In such a case, the
automatic differentiation is computed using the chain rule of function
differentiation, by means of the celebrated backpropagation algorithm.
When the mappings $\sigma_1,\cdots,\sigma_L,\ell(\cdot,Y_s)$ are differentiable, the
chain rule indeed applies and the output coincides with the gradient.
However, the chain rule fails in case of non-differentiable functions.
The properties of the map $a_h$ are studied in the recent work
\cite{bolte2019conservative}.
In general, $a_h(x)$ may not be an element of the Clarke-subdifferential
$\partial h(x)$. It can even happen that $a_h(x)\neq \nabla h(x)$ at some
points $x$ where $h$ is differentiable. However, the set of such peculiar
points is proved to be Lebesgue negligible. As a consequence, if the initial
point $x_0$ is chosen random according to some density w.r.t. the Lebesgue
measure, an autograd sequence can be shown to be a SGD sequence in the sense of
Equation~(\ref{eq:sgd}) under some conditions.
The aim of this paper is to analyze the asymptotic behavior of SGD sequences
in the case where the step $\gamma$ is constant.

\smallskip

{\bf About the literature.} In the nonsmooth and non convex case, the
convergence of SGD has been studied in \cite{ermoliev2003solution} and \cite{erm-nor-98}
using the concept of generalized differentiability \cite{nor_gen80},
and assuming a Sard-like condition on the critical set.
More recently, using a differential inclusion (DI) approach, the papers \cite{dav-dru-kak-lee-19}
provide a similar result under the additional assumption that the objective function
is Whitney-stratifiable (see also \cite{maj-mia-mou18}, in the particular case of
subdifferentially regular functions). These papers make two major hypotheses on
the algorithm under study, which we avoid in this paper.

The first major hypothesis in the above papers if the fact that the step size
is vanishing, \emph{i.e.}, $\gamma$ is replaced with a sequence $(\gamma_n)$ that tends to zero
as $n\to \infty$.
From a theoretical point of view, the vanishing step size is convenient
because, under various assumptions,
it allows to demonstrate the almost sure convergence of the iterates
$x_n$ to the set
\begin{equation}
\cZ \eqdef \{ x \in \RR^d \, : \, 0 \in \partial F(x) \} \label{eq:S}
\end{equation}
of critical points of $F$. However, in practical applications such as
neural nets, a vanishing step size is rarely used because of slow
convergence issues. In most computational frameworks,
a possibly small but nevertheless constant step size is used by default.
The price to pay is that the iterates are no longer expected to converge
almost surely to the set $\cZ$ but to
fluctuate in the vicinity of $\cZ$ as $n$ is large. In this paper, we aim at
establishing a result of the type
\begin{equation}
\label{eq:main_res}
  \forall \varepsilon > 0, \quad \limsup_{n \to \infty}
  \bbP(\bs d(x_n, \cZ) > \varepsilon) \xrightarrow[\gamma \downarrow 0]{} 0,
\end{equation}
where $\bs d$ is the Euclidean distance between $x_n$ and the set $\cZ$.
Although this result is weaker than in the vanishing step case, constant step
stochastic algorithms can reach a neighborhood of $\cZ$ faster than their
decreasing step analogues, which is an important advantage in the applications
where the accuracy of the estimates is not essential.  Moreover, in practice
they are able to cope with non stationary or slowly changing environments which
are frequently encountered in signal processing, and possibly track a changing
set of solutions \cite{ben-met-pri-livre90,kus-yin-(livre)03}.

The second important difference between the present paper and the papers
\cite{maj-mia-mou18,dav-dru-kak-lee-19} lies in the algorithm under study.
In these papers, the iterates are supposed
to satisfy the inclusion
\begin{equation}
\frac{x_{n+1}- x_n}{\gamma_{n+1}} \in - \partial F(x_n)+  \eta_{n+1}\,\label{eq:oracle}
\end{equation}
for all $n$, where $(\eta_{n})$ is a martingale increment noise w.r.t. the filtration
$(\sigma(x_0, \xi_1, \ldots, \xi_n))_{n\geq 1}$.
Under the assumption that $\gamma_n \to 0$ as $n\to\infty$,
the authors of \cite{maj-mia-mou18,dav-dru-kak-lee-19} prove that almost surely,
the continuous time linearly interpolated process constructed from a sequence $(x_n)$ satisfying (\ref{eq:oracle})
is a so-called asymptotic pseudotrajectory \cite{ben-hof-sor-05} of the
Differential Inclusion (DI)
\begin{equation}
\dot \sx(t) \in - \partial F(\sx(t))\,,
\label{eq:DI}
\end{equation}
that will be defined on $\RR_+ = [0,\infty)$.
Heuristically, this means that a sequence $(x_n)$ satisfying~(\ref{eq:oracle})
shadows a solution to (\ref{eq:DI}) as $n$ tends to infinity. This result is
one of the key ingredients to establish the almost sure convergence of $x_n$ to
the set $\cZ$.  Unfortunately, a SGD sequence does not satisfy the
condition~(\ref{eq:oracle}) in general (setting apart the fact that $\gamma$ is
constant).  To be more precise, consider a Clarke-SGD sequence as defined
above.  For all $n$, $x_{n+1} = x_n -\gamma \partial f(x_n,\xi_{n+1})$, which
in turn implies
\begin{align*}
\frac{x_{n+1}- x_n}\gamma &\in - \EE\partial f(x_n,\,.\,) + \eta_{n+1}\,,
\end{align*}
where $(\eta_n)$ is a martingale increment noise sequence, and where $\EE\partial f(x,\,.\,)$
represents the set-valued expectation $\int \partial f(x,s)d\mu(s)$.
The above inclusion is analogous to~(\ref{eq:oracle}) in the case where
$\partial F(x) = \EE \partial f(x, \cdot)$ for all $x$ \emph{i.e.},
if one can interchange the expectation $\EE$ and the Clarke subdifferential operator $\partial$.
Although the interchange
holds if \emph{e.g.}, the functions $f(\cdot, s)$ are convex (in which
case $\partial f(x,s)$ would coincide with the classical convex
subdifferential), one has in general
$\partial \EE f(x, \cdot) \subset \EE \partial f(x, \cdot)$ and the
inclusion can be strict \cite[Proposition
2.2.2]{cla-led-ste-wol-livre98}.
As a consequence, a Clarke-SGD sequence does not admit the oracle form
(\ref{eq:oracle}) in general. For such a sequence, the corresponding DI reads
\begin{equation}
\dot \sx(t) \in - \EE \partial f(\sx(t),\,.\,)\,,
\end{equation}
but unfortunately, the flow of this DI may contain spurious equilibria
(an example is provided in the paper).
In \cite{maj-mia-mou18} the authors restrict their analysis to \emph{regular} functions \cite[\S
2.4]{cla-led-ste-wol-livre98}, for which the interchange of the expectation and
the subdifferentiation applies.  However, this assumption can be restrictive, since a
function as simple as $- | x|$ is not regular at the critical point zero.
The issue of the absence of interchange between the expectation and the Clarke subdifferential
was addressed in \cite{erm-nor-98} using the notion of generalized differentiability.
In this work, the convergence is established towards the set of zeroes of the
generalized subdifferential of $F$. However, this set can be substantially larger than
the set $\cZ$ of critical points.

A second example where the oracle form of Equation~(\ref{eq:oracle}) does not hold
is given by autograd sequences. Such an example is studied in
\cite{bolte2019conservative}, assuming that the step size is vanishing and that
$\xi$ takes its values over a finite set. It is proved that the autograd
sequence is an almost sure asymptotic pseudotrajectory of the DI $\dot
\sx(t)\in - D(\sx(t))$, for some set-valued map $D$ which is shown to be a
\emph{conservative} field with $F$ as a potential. Properties of conservative
fields are studied in \cite{bolte2019conservative}. In particular, it is proved
that $D=\{\nabla f\}$ Lebesgue almost everywhere. Despite this property, the DI
$\dot \sx(t)\in - D(\sx(t))$ substantially differs from (\ref{eq:DI}).  Again,
the set of equilibria may be strictly larger than the set $\cZ$ of
critical points of $F$.

We finally mention the paper~\cite{rusz_20}, which studies an
inertial version of SGD in the vanishing step size regime.
Similarly to \cite{maj-mia-mou18,dav-dru-kak-lee-19} and contrary to the present paper,
the author assumes the oracle form of Equation~(\ref{eq:oracle}).
The almost sure convergence is established, under the rather weak assumption that
$F$ is differentiable in Norkin's generalized sense.

\subsubsection*{Contributions}

\begin{itemize}
\item We analyze the SGD algorithm~(\ref{eq:sgd}) in the non-smooth,
  non-convex setting, under realistic assumptions: the step size is
  assumed to be constant along the iterations, and we neither assume
  the regularity of the functions involved, nor the knowledge of an
  oracle of $\partial F$ as in (\ref{eq:oracle}).
  Our assumptions encompass Clarke SGD sequences, autograd and Norkin SGD sequences
  as special cases.

\item Under mild conditions, we prove that when the initialization $x_0$ is
randomly chosen with a density, all SGD sequences
coincide almost surely, irrespective to the particular selection policy used at the
points of non-differentiability. In this case, $x_n$ almost never hits
a non-differentiable point of $f(\cdot,\xi_{n+1})$ and Equation~(\ref{eq:sgd}) actually holds for all $n$.
Moreover, we prove that
$$
\frac{x_{n+1}- x_n}\gamma = - \nabla F(x_n) + \eta_{n+1}\,,
$$
where $(\eta_n)$ is a martingale difference sequence, and $\nabla F(x_n)$ is the true gradient
of $F$ at $x_n$. This argument allows to bypass the oracle assumption of
\cite{maj-mia-mou18,dav-dru-kak-lee-19}.
\item We establish that the continuous process obtained by piecewise affine interpolation
of $(x_n)$ is a \emph{weak asymptotic pseudotrajectory} of the DI~(\ref{eq:DI}).
In other words, the interpolated process converges in probability to the set
of solutions to the DI, as $\gamma\to 0$, for the metric of uniform convergence on compact intervals.
\item We establish the long run convergence of the iterates $x_n$ to the set
$\cZ$ of Clarke critical points of $F$, in the sense of Equation~(\ref{eq:main_res}).
This result holds under two main assumptions. First, it assumed that
$F$ admits a chain rule, which is satisfied for instance if $F$ is a so-called tame function.
Second, we assume a standard drift condition on the Markov chain~(\ref{eq:sgd}).
Finally, we provide verifiable conditions of the functions $f(\cdot,s)$
under which the drift condition holds.

\item In many practical situations, the drift conditions alluded to above are
not satisfied.  To circumvent this issue, we analyze a projected version of the
SGD algorithm, which is similar in its principle to the well-known projected
gradient algorithm in the classical stochastic approximation theory.

\end{itemize}

\subsubsection*{Paper organization}
Section~\ref{subsec:prel} recalls some known facts about Clarke
subdifferentials, conservative fields and differential inclusions. In
Section~\ref{sec:ae-gradient}, we study the elementary properties of
almost-everywhere gradient functions, defined as the functions $\varphi(x,s)$
which coincide with $\nabla f(x,s)$ almost everywhere. Practical examples are
provided. In Section~\ref{sec:sgd}, we study the elementary properties of SGD
sequences. Section~\ref{sec:dyn_beh} establishes the convergence in probability
of the interpolated process to the set of solutions to the DI. In
Section~\ref{sec:longrun}, we establish the long run convergence of the
iterates to the set of Clarke critical points.  Section~\ref{sec:proj-sgd} is
devoted to the projected subgradient algorithm.  The proofs are found in
Section~\ref{sec:proofs}.

\section{Preliminaries}\label{subsec:prel}

\subsection{Notations}\label{sec:not}

If $\nu,\nu'$ are two measures on some measurable space $(\Omega,\cF)$,
$\nu\ll\nu'$ means that $\nu$ is absolutely continuous w.r.t. $\nu$.
The $\nu$-completion of $\mcF$ is defined as the sigma-algebra consisting of
the sets $S \subset \Omega$ such that there exist $A,B \in \mcF$ with
$A \subset S \subset B$ and $\nu(B\setminus A) = 0$. For these sets,
$\nu(S) = \nu(A)$.

If $E$ is a metric space, we denote by $\mcB(E)$ the Borel sigma field on $E$.
Let $d$ be an integer. We denote by $\cM(\RR^d)$ the set of probability
measures on $\mcB(\RR^d)$ and by $\cM_1(\RR^d)\eqdef\{\nu\in
\cM(\RR^d):\int\|x\|\nu(dx)<\infty\}$. We denote as $\lambda^d$ the Lebesgue
measure on $\RR^d$. When the dimension is clear from the context, we denote
as $\lambda$ this Lebesgue measure. For a subset $\cK\subset\RR^d$, we
denote by
$$
\cM_{abs}(\cK)\eqdef\{\nu\in \cM(\RR^d):\nu\ll\lambda\text{ and }\text{supp}(\nu)\subset\cK\}\,,
$$
where $\text{supp}(\nu)$ represents the support of $\nu$.

If $P$ is a Markov kernel on $\bbR^d$ and
$g:\bbR^d\to\bbR$ is a measurable function, $Pg$ represents the function on
$\bbR^d\to\bbR$ given by $Pg(x) = \int P(x, \dif y)g(y)$, whenever the integral
is well-defined (the integral is understood in the weak sense). For every measure $\pi\in {\mathcal M}(\bbR^d)$, we denote by
$\pi P$ the measure given by $\pi P = \int \pi(\dif x) P(x, \cdot)$.  We use
the notation $\pi(g) = \int g \dif \pi$ whenever the integral is well-defined.

For every $x\in \RR^d$, $r>0$, $B(x,r)$ is the open
Euclidean ball with center $x$ and radius $r$.
The notation $\1_A$ stands for the indicator function of a set $A$, equal to
one on that set and to zero otherwise. The notation $A^c$ represents the
complementary set of a set $A$ and $\cl(A)$ its closure.

\subsection{Subdifferentials and Conservative Fields}


\label{sec:clark_cons_field}

A set valued map $H : \bbR^d \rightrightarrows \bbR^d$ is a map such that for
each $x \in \bbR^d$, $H(x)$ is a subset of $\bbR^d$.  We say that $H$ is upper semi continuous, if its graph $\{ (x, y): y \in H(x)\}$ is closed in $\bbR^{d \times d}$. For any function
$F:\bbR^d \rightarrow \bbR$, we denote by $\cD_F$ the set of points $x\in
\RR^d$ such that $F$ is differentiable at $x$. If $F$ is locally Lipschitz
continuous, it is by Rademacher's theorem almost everywhere differentiable. In
this case, the Clarke's subdifferential of $F$ coincides with the set-valued
map $\partial F:\bbR^d\rightrightarrows\bbR^d$ given for all $x\in \bbR^d$ by
\begin{equation*}
  \partial F (x) = \co \left\{ y\in \bbR^d\,:\,\exists (x_n)_{n\in \NN}\in \cD_F^\NN\ \text{ s.t. } (x_n,\nabla F(x_n)) \rightarrow (x,y)\right\}\,,
\end{equation*}
where $\co$ stands for the convex hull \cite{cla-led-ste-wol-livre98}.

We now briefly review some recent results of
\cite{bolte2019conservative}.  A set-valued map
$D:\bbR^d\rightrightarrows \bbR^d$ is called a \emph{conservative field}, if
for each $x \in \bbR^d$, $D(x)$ is a nonempty and compact subset of
$\bbR^d$, $D$ has a closed graph, and for each absolutely continuous
$a \colon [0, 1] \to \bbR^d$, with $a(0) = a(1)$, it holds that:
 \begin{equation*}
   \int_{0}^1 \min_{v \in D(a(t))} \scalarp{\dot a(t)}{v} \dif t= \int_{0}^1 \max_{v \in D(a(t))}\scalarp{\dot a(t)}{v} \dif t = 0 \, .
 \end{equation*}
We say that a function $F:\bbR^d\to\bbR$ is a \emph{potential} for the conservative
field $D$ if for every $x\in \bbR^d$ and every absolutely continuous
 $a \colon [0, 1] \to \bbR^d$, with $a(0) = 0$ and $a(1)=x$,
\begin{equation}
  F(x) = F(0) +  \int_{0}^1 \min_{v \in D(a(t))} \scalarp{\dot a(t)}{v} \dif t \, .
\label{eq:conservative}
\end{equation}
In this case, such a function $F$
is locally Lipschitz continuous,
and for every absolutely continuous curve
$a:[0,1]\to \bbR^d$, the function $t\mapsto F(a(t))$ satisfies
for almost every $t\in [0,1]$,
$$
\frac{d}{dt}F(a(t)) = \ps{v,\dot a(t)}\qquad(\forall v\in D(a(t)))\,,
$$
that is to say, $F$ admits a ``chain rule'' \cite[Lemma 2]{bolte2019conservative}.
Moreover, by \cite[Theorem 1]{bolte2019conservative},
it holds that $D=\{\nabla F\}$ Lebesgue almost everywhere.

We say that a function $F$
is \emph{path differentiable} if there exists a conservative field $D$
such that $F$ is a potential for $D$.
If $F$ is path differentiable, then the Clarke subdifferential $\partial F$
is a conservative field for the potential $F$
\cite[Corollary 2]{bolte2019conservative}.
Another useful example of a conservative field for composite functions is
the automatic differentiation field \cite[Section 5]{bolte2019conservative}.
A broad class of functions used in optimization are path differentiable,
e.g. any convex, concave, regular or tame.
A tame function is a function defined in some o-minimal structure
(\cite{van96}), they enjoy some nice stability properties such as any
elementary operation on them remain tame (e.g. composition, sum,
inverse). The domain $f$ of a tame function admits a so-called Whitney
stratification, that is to say a collection of manifolds $(S_i)$ on
each of which $f$ is smooth with the additional property that the various
gradients fit well together (see \cite{bolte2007clarke} for more
details). The exponential and the logarithm are tame, as well as any
semialgebraic function, an interested reader can find more on tameness
and its usefulness in optimization in \cite{iof08}, and more details
in \cite{van96}, \cite{bolte2007clarke} and \cite{dav-dru-kak-lee-19}.

A similar point of view on differentiation of non-smooth functions is given by
the generalized subdifferential introduced by Norkin \cite{nor_gen80}. A
function $F: \RR^d \rightarrow \RR$ is said to be differentiable in a
generalized sense if there is a set-valued map $G_{F}: \RR^d \rightrightarrows
\RR^d$ such that for every $x$, $G_F(x)$ is nonempty, convex, compact valued,
the graph of $G_F$ is closed, and
\begin{equation*}
  F(y) = F(x) + \scalarp{g(y)}{y-x} + o(x, y, g)\, , \quad{} \textrm{with } g(y) \in G_{F}(y) \textrm{ and } \lim_{y \rightarrow x}\sup_{g \in G_{F}(y)} \frac{o(x,y,g)}{\norm{x - y}} = 0 \, .
\end{equation*}
As in the path-differentiable case, the class of such functions contains tame,
regular and Whitney stratifiable functions. A nice feature of this class is
that, under mild conditions, it is closed with respect to the expectation. That
is to say, if $f: \bbR^d \times \Xi \rightarrow \bbR$ is such that for every $s
\in \Xi$, $f(\cdot, s)$ differentiable in a generalized sense, then the same is
true for $F(x) := \int f(x, s) \mu(\dif s)$ \cite{mikh_gup_nor87}.
Stochastic algorithms with decreasing steps involving the generalized
subdifferential were studied in \cite{erm-nor-98,rusz_20}.

\subsection{Differential Inclusions}\label{subsec:dif_incl}

We endow the set of continuous function from $\bbR_{+}$ to $\bbR^d$ with the metric of uniform convergence on compact intervals of $\bbR_{+}$:
\begin{equation}\label{eq:dC}
\bs d_C (\sx,\sy) = \sum_{n \in \bbN}2^{-n}
  \left(1\wedge \sup_{t\in [0,n]}\|\sx(t)-\sy(t)\|\right)
\end{equation}
Given a set valued map $\sH:\bbR^d \rightrightarrows \bbR^d$, we
say that $x:\bbR_{+} \rightrightarrows \bbR^d$ is a solution of the differential inclusion
\begin{equation}
  \dot{x}(t) \in \sH(x(t))
\label{eq:dif_incl}
\end{equation}
with initial condition $x_0\in \bbR^d$, if $x$ is absolutely continuous,
$x(0) = x_0$ and \eqref{eq:dif_incl} holds for almost every  $t\in\bbR_{+}$.
We denote by $\cS_{\sH}:\RR^d\rightrightarrows C(\RR_+, \RR^d)$ the set-valued
mapping such that for every $a\in \RR^d$, $\cS_{\sH}(a)$ is set of solutions
of~\eqref{eq:dif_incl} with $x_0 = a$. For every
subset $A\subset E$, we define $\cS_\sH(A) = \bigcup_{a\in A}\cS_\sH(a)$.

If a map $\sH$ has nonempty values we will say that it is upper semicontinuous
if the graph of $\sH$, $\{ (x,y) :  y \in \sH(x) \}$, is closed.
In the case where $\sH$ is upper semicontinuous with compact and convex
values and satisfies the condition
\begin{equation}\label{eq:di_lin_growth}
  \exists K  \geq 0, \ \forall x \in \bbR^d,\   \sup \{ \| v \| : v\in H(x)\}\leq K(1+\|x\|)
\end{equation}
then $\cS_\sH(a)$ is non empty for each $a \in \RR^d$, and moreover,
$\cS_\sH(\RR^d)$ is closed in the metric space $(C(\RR_+, \RR^d), \bs d_C)$
\cite{aub-cel-(livre)84}.
The Clarke subdifferential of a locally Lipschitz function
is upper semicontinuous set valued map with
nonempty compact convex values \cite[Chap.~3]{cla-led-ste-wol-livre98}.

\section{Almost-Everywhere Gradient Functions}
\label{sec:ae-gradient}

\subsection{Definition}

Let $(\Xi, \mcT, \mu)$ be a probability space, where the $\sigma$-field $\mcT$
is $\mu$-complete. Let $d>0$ be an integer.
Consider a function $f : \bbR^d \times \Xi \to \RR$. We denote by
$\Delta_f \eqdef \{ (x,s) \in \RR^d \times \Xi \, : \, x\in \cD_{f(\cdot,
s)} \}$ the set of points $(x,s)$ s.t. $f(\cdot,s)$ is differentiable at
$x$. We denote by $\nabla f(x,s)$ the gradient of $f(\cdot,s)$ at $x$,
whenever it exists.

The following technical lemma, the proof of which is provided in
Section~\ref{sec:proofnabmes}, is essential.
\begin{lemma}
\label{nabmes}
Assume that  $f$ is $\mcB(\RR^d) \otimes
\mcT$-measurable  and that $f(\cdot, s)$ is continuous for every $s\in \Xi$.
Then $\Delta_f\in \mcB(\RR^d) \otimes \mcT$, and the function
$\varphi_0 : \RR^d \times \Xi \to \RR^d$ defined as
\begin{equation}
\label{lazy-phi}
\varphi_0(x,s) = \left\{\begin{array}{cl} \nabla f(x,s) & \text{if }
     (x,s) \in \Delta_f \\
     0 & \text{otherwise,} \end{array}\right.
\end{equation}
is $\mcB(\RR^d) \otimes \mcT$-measurable. Moreover, if $f(\cdot,s)$ is locally
Lipschitz continuous for every $s\in \Xi$, then
$(\lambda\otimes\mu)(\Delta_f^c) = 0$.
\end{lemma}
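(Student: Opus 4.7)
My plan is to prove the three claims in order, each by reducing the statement to a countable combination of jointly measurable operations. The first claim is the delicate one; once $\Delta_f \in \mcB(\RR^d) \otimes \mcT$ is established, the measurability of $\varphi_0$ and the null-measure statement follow from standard tools.

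For the first claim, the plan is to characterize differentiability of the continuous function $g := f(\cdot,s)$ at $x$ by the countable condition
\[
  \forall \varepsilon \in \QQ_{>0},\ \exists n \in \NN,\ \exists y \in \QQ^d,\ \forall h \in (\QQ^d \cap B(0,1/n)) \setminus \{0\}:\ |g(x+h) - g(x) - \langle y, h \rangle| \leq \varepsilon \|h\|.
\]
The $(\Rightarrow)$ direction is easy: if $g$ is differentiable at $x$ with derivative $y_\star \in \RR^d$, then any rational $y_\varepsilon$ with $\|y_\star - y_\varepsilon\| \leq \varepsilon/2$ works on a sufficiently small ball. The $(\Leftarrow)$ direction is the main obstacle: I would show that the rational quasi-derivatives $y_\varepsilon$ form a Cauchy sequence in $\RR^d$ by testing $\langle y_{\varepsilon_1} - y_{\varepsilon_2}, h\rangle$ at small rational multiples of the coordinate vectors, which forces $\|y_{\varepsilon_1}-y_{\varepsilon_2}\| \leq \sqrt{d}\,(\varepsilon_1 + \varepsilon_2)$; the limit $y_\star$ is then identified as the derivative by passing from rational $h$ to arbitrary $h \in B(0,1/n) \setminus \{0\}$ via continuity of $g$. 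Once the equivalence is in hand, each atomic set $\{(x,s): |f(x+h,s) - f(x,s) - \langle y, h\rangle| \leq \varepsilon\|h\|\}$ lies in $\mcB(\RR^d) \otimes \mcT$ (joint measurability of $f$ is preserved under translation in $x$ by a standard monotone class argument), so $\Delta_f$ is exhibited as a countable Boolean combination of measurable sets.

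For the second claim, on $\Delta_f$ the $i$-th component of $\nabla f(x,s)$ is the pointwise limit, along any rational sequence $t_k \to 0$, of the difference quotient $(f(x+t_k e_i, s) - f(x,s))/t_k$; since each quotient is jointly measurable, this componentwise limit is measurable wherever defined, and extending by $0$ on $\Delta_f^c$ preserves measurability. The third claim follows directly: under the local Lipschitz hypothesis, Rademacher's theorem gives $\lambda(\{x: (x,s) \notin \Delta_f\}) = 0$ for every fixed $s$, and Tonelli's theorem applied to the (now jointly measurable) indicator $\1_{\Delta_f^c}$ yields $(\lambda \otimes \mu)(\Delta_f^c) = \int \lambda(\{x : (x,s) \notin \Delta_f\}) \, \mu(ds) = 0$.
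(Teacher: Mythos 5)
Your proposal is correct and follows essentially the same route as the paper: both characterize $\Delta_f$ as a countable Boolean combination of jointly measurable sets indexed by rational approximate derivatives and rational increments (using continuity of $f(\cdot,s)$ to pass from rational to arbitrary increments), then obtain measurability of $\varphi_0$ from difference quotients on $\Delta_f$ and the null-measure claim from Rademacher's theorem plus Fubini. The only cosmetic difference is that you let the rational quasi-derivative depend on $\varepsilon$ and recover the true derivative via a Cauchy argument in coordinate directions, whereas the paper fixes a single rational $d$ per outer tolerance $\varepsilon'$ and extracts a convergent subsequence of the resulting $d_k$; both verifications are sound.
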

Thanks to this lemma, the following definition makes sense.
\begin{definition}
\label{def:gradient}
Assume that $f(\cdot,s)$ is locally Lipschitz continuous for every $s\in \Xi$.
A function $\varphi : \RR^d \times \Xi \to \RR^d$ is called an
\emph{almost everywhere (a.e.)-gradient} of $f$ if
 $\varphi = \nabla f$ $\lambda\otimes\mu$-almost everywhere.
\end{definition}
By Lemma~\ref{nabmes}, we observe that a.e.-gradients exist, since
$(\lambda\otimes\mu)(\Delta_f^c) = 0$.
Note that in Definition~\ref{def:gradient}, we do not assume that
$\varphi$ is $\mcB(\RR^d)\otimes\mcT / \mcB(\RR^d)$-measurable. The
reason is that this property is not always easy to check on practical examples.
However, if one denotes by $\overline{\mcB(\RR^d)\otimes\mcT}$ the $\lambda\otimes\mu$ completion of the
$\sigma$-field $\mcB(\RR^d)\otimes\mcT$, an immediate consequence of Lemma~\ref{nabmes}
is that any a.e.-gradient of $f$ is a $\overline{\mcB(\RR^d)\otimes\mcT} /
\mcB(\RR^d)$-measurable function. 

\subsection{Examples}

\noindent{\bf Lazy gradient function.} The function $\varphi_0$ given by Equation~\eqref{lazy-phi}
is an a.e. gradient function.

\medskip

\noindent {\bf Clarke gradient function.}
We shall refer to as a Clarke gradient function as any function $\varphi(x,s)$
such that
\begin{equation}
\begin{cases}
\varphi(x,s) &= \nabla f(x, s)\text{  if }(x,s)\in \Delta_f ,  \\
\varphi(x,s) &\in \partial f(x,s)\text{  otherwise.}
\end{cases}
\label{eq:clarkeSGD}
\end{equation}
Note that the inclusion $\varphi(x,s) \in \partial f(x,s)$ obviously holds
for \emph{all} $(x,s) \in \RR^d \times \Xi$, because
$\nabla f(x,s)$ is an element of $\partial f(x,s)$ when the former exists.
However, conversely, a function $\psi(x,s) \in \partial f(x,s)$ does not
necessarily satisfy $\psi(x,s) = \nabla f(x, s)$ if $(x,s)\in \Delta_f$
(see the footnote\footnote{If a locally Lipschitz function $g$ is differentiable
  at a point $x$, we have $\{\nabla g(x)\} \subset \partial g(x)$ but the
  inclusion could be strict (the two sets are equal if $g$ is
    regular at $x$): for example, $g(x) = x^2 \sin(1/x)$ is s.t. $\nabla g(0) = 0$ and $\partial g(0) =
  [-1, 1]$. There even exist functions for which the set of $x$ s.t. $\{
  \nabla g(x) \} \subsetneq \partial g(x)$ is a set of full measure (see \cite[Proposition 1.9]{leb-79}).
}). By construction, a Clarke gradient function is an a.e. gradient function.

\medskip

\noindent {\bf Selections of conservative fields.}
\begin{proposition}\label{lm:cons_sgd}
Assume that for every $s\in \Xi$,
$f(\cdot, s)$ is locally Lipschitz, path differentiable, and is a potential of some conservative field
$D_s:\bbR^d\rightrightarrows \bbR^d$.
Consider a function $\varphi:\bbR^d\times \Xi\to\bbR^d$ which is
$\mcB(\RR^d)\otimes\mcT / \mcB(\RR^d)$ measurable and satisfies
$\varphi(x,s) \in D_{s}(x)$ for all $(x,s)\in \RR^d\times\Xi$.
Then, $\varphi$ is an a.e. gradient function for $f$.
\end{proposition}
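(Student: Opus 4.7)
The plan is to reduce the statement to the Bolte--Pauwels identity $D_s = \{\nabla f(\cdot, s)\}$ Lebesgue-almost everywhere (cited above from \cite[Theorem 1]{bolte2019conservative}) and then integrate over $s$ via Tonelli. Fix $s\in\Xi$. Because $f(\cdot,s)$ is locally Lipschitz and is a potential of the conservative field $D_s$, the cited theorem yields a Lebesgue-null set $N_s\subset\RR^d$ such that $D_s(x)=\{\nabla f(x,s)\}$ for every $x\in N_s^c$. Since $\varphi(x,s)\in D_s(x)$ by hypothesis, this forces $\varphi(x,s)=\nabla f(x,s)$ for every $x\in N_s^c$, hence for Lebesgue-almost every $x\in\RR^d$.

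Next I would set up the ``bad set''
\begin{equation*}
B \eqdef \{(x,s)\in \Delta_f \,:\, \varphi(x,s)\neq \nabla f(x,s)\}
\end{equation*}
and verify its measurability so that Tonelli can be applied. By Lemma~\ref{nabmes}, $\Delta_f\in\mcB(\RR^d)\otimes\mcT$ and the lazy gradient $\varphi_0$ defined in \eqref{lazy-phi} is $\mcB(\RR^d)\otimes\mcT$-measurable; since $\varphi$ is also $\mcB(\RR^d)\otimes\mcT$-measurable by assumption and $\varphi_0=\nabla f$ on $\Delta_f$, one sees that $B = \Delta_f\cap\{\varphi\neq\varphi_0\}$ lies in $\mcB(\RR^d)\otimes\mcT$.

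Finally, the first step shows that the $s$-section $B_s=\{x:(x,s)\in B\}$ is contained in $N_s$, hence $\lambda(B_s)=0$ for every $s$. Tonelli's theorem then gives
\begin{equation*}
(\lambda\otimes\mu)(B) = \int_{\Xi} \lambda(B_s)\,\mu(ds) = 0.
\end{equation*}
Combining this with the second conclusion of Lemma~\ref{nabmes}, namely $(\lambda\otimes\mu)(\Delta_f^c)=0$, we deduce that $\varphi=\nabla f$ holds $\lambda\otimes\mu$-almost everywhere, which is exactly the defining property of an a.e.-gradient function.

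There is no real obstacle here: the substantive content is entirely supplied by the Bolte--Pauwels a.e.-coincidence theorem, and the remainder is a measurability bookkeeping argument to ensure that Tonelli applies to $B$. The only point that requires a little care is making sure that the assumed $\mcB(\RR^d)\otimes\mcT$-measurability of $\varphi$ (rather than mere measurability of its sections) together with Lemma~\ref{nabmes} yields joint measurability of $B$; once this is checked, the fibrewise Lebesgue-null estimate closes the argument.
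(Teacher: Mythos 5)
Your proposal is correct and follows essentially the same route as the paper: the paper defines the bad set $A=\{(x,s): \varphi(x,s)\neq\nabla f(x,s)\}$, invokes the same Bolte--Pauwels result that $D_s=\{\nabla f(\cdot,s)\}$ Lebesgue-a.e.\ for each $s$, and concludes by Fubini that $(\lambda\otimes\mu)(A)=0$. Your version merely adds the (welcome) bookkeeping of splitting off $\Delta_f^c$ and checking joint measurability of the bad set explicitly, which the paper leaves implicit.
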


\begin{proof}
Define $A \eqdef \{(x,s) \text{ s.t. } \varphi(x,s) \neq \nabla f(x,s)\}$. Applying Fubini's theorem we have:
\begin{equation*}
  \int 1_A(z) \lambda \otimes \mu(\dif z) = \int \int 1_A((x,s)) \lambda (\dif x) \mu (\dif s) = 0 \, ,
\end{equation*}
where the last equality comes from the fact that for every $s$, $D_s=\{ \nabla f(\cdot, s)\}$ $\lambda$-a.e. \cite[Theorem 1]{bolte2019conservative}.
\end{proof}
We provide below an application of Proposition~\ref{lm:cons_sgd}.

\medskip

\noindent {\bf Autograd function.} Consider Equation~\eqref{cnn}, which represents a loss of a neural network. Although $f$ is just a composition of some simple functions, a direct calculation of the gradient (if it exists) may be tedious. Automatic differentiation deals with such functions by recursively applying the chain rule to the components of $f$.
More formally consider a function $f : \bbR^d \to \bbR$ that can be written as a closed formula of simple functions, mathematically speaking this means that we can represent $f$ by a directed graph. This graph (with $q>d$ vertices) is defined through a set-valued function $\bold{parents}(i) \subset \{1, \dots, i-1\}$, a directed edge in this setting will be $j \rightarrow i$ with $j \in \bold{parents}(i)$. Associate to each vertex a simple function
$g_i \colon \bbR^{|\bold{parents}(i)|} \to \bbR$, given an input $ x = (x_1, \dots, x_d) \in \bbR^d$ we recursively define  $x_i = g_i((x_{j})_{j \in \bold{parents}(i)})$ for $i > d$ and finally $f(x) = x_q$. For instance, if $f$ is a cross entropy loss of a neural network, with activation functions being  ReLu or sigmoid functions, then $g_i$ are some compositions of simple functions $\bold{log}$, $\bold{exp}$, $\bold{\frac{1}{1 + x^2}}$,
 norms and piecewise polynomial functions, all being path differentiable \cite[section 6]{bolte2019conservative}, \cite[Section 5.2]{dav-dru-kak-lee-19}.
Automatic differentiation libraries calculate the gradient of $f$ by successively applying the chain rule (in the sense $(g_1 \circ g_2)' = (g_1'\circ g_2 )g_2'$) to the simple functions $g_i$. While the chain rule is no longer valid in a nonsmooth setting (see e.g. \cite{kak_lee19}), it is shown in \cite[Section 5]{bolte2019conservative} that when the simple functions are path-differentiable, the output of automatic differentiation (e.g. {\tt{autograd}} in PyTorch (\cite{pytorch})) is a selection of some conservative field $D$ for $f$.
We refer to \cite{bolte2019conservative} for a more detailed account.
We denote by $a_f(x)$ the output of automatic differentiation of a function $f$ at some point $x$.

Assume that $\Xi = \bbN$ and for each $s \in \Xi$, $f(\cdot, s)$ is defined through a recursive graph of path differentiable functions (in the machine learning paradigm $f(\cdot, s)$ will represent the loss related to one data point, while $F(\cdot)$ is the average loss).
By Proposition~\ref{lm:cons_sgd}, the map $(x,s)\mapsto a_{f(\cdot, s)}(x)$ is an a.e. gradient function for $f$.

\medskip

\noindent{\bf Selections of generalized subdifferentials of Norkin.}
Noticing that a generalized subdifferential of a function is equal to its gradient a.e. (\cite[Theorem 1.12]{mikh_gup_nor87}), the proof of the next proposition is identical to the one of Proposition~\ref{lm:cons_sgd}.
\begin{proposition}
Assume that for every $s\in \Xi$,
$f(\cdot, s)$ is differentiable in a generalized sense, with $G_{f(\cdot, s)} : \bbR^d \rightrightarrows\bbR^d $ being its generalized subdifferential.
Consider a function $\varphi:\bbR^d\times \Xi\to\bbR^d$ which is
$\mcB(\RR^d)\otimes\mcT / \mcB(\RR^d)$ measurable and satisfies
$\varphi(x,s) \in G_{f(\cdot,s)}(x)$ for all $(x,s)\in \RR^d\times\Xi$.
Then, $\varphi$ is an a.e. gradient function for $f$.
\end{proposition}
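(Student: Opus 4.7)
The plan is to mimic the proof of Proposition \ref{lm:cons_sgd} essentially verbatim, replacing the use of \cite[Theorem 1]{bolte2019conservative} with the corresponding result of Norkin cited just above, namely \cite[Theorem 1.12]{mikh_gup_nor87}, which asserts that for a generalized differentiable function $h$, the generalized subdifferential satisfies $G_h(x) = \{\nabla h(x)\}$ for $\lambda$-almost every $x$.

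First I would introduce the set on which $\varphi$ disagrees with the gradient. Because $\varphi$ is $\mcB(\RR^d)\otimes\mcT$-measurable by assumption and because $\Delta_f$ and $\varphi_0 = \nabla f \1_{\Delta_f}$ are $\mcB(\RR^d)\otimes\mcT$-measurable by Lemma~\ref{nabmes}, the set
\begin{equation*}
A \eqdef \{(x,s) \in \RR^d\times\Xi : \varphi(x,s) \neq \varphi_0(x,s)\} \cup \Delta_f^c
\end{equation*}
lies in $\mcB(\RR^d)\otimes\mcT$, and showing $(\lambda\otimes\mu)(A)=0$ is exactly what is needed to conclude $\varphi = \nabla f$ $\lambda\otimes\mu$-a.e.

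Next, I would apply Fubini-Tonelli to write
\begin{equation*}
(\lambda\otimes\mu)(A) = \int_\Xi \lambda\bigl(A_s\bigr)\,\mu(ds),
\end{equation*}
where $A_s = \{x\in\RR^d : (x,s)\in A\}$. Fix $s\in\Xi$. By Lemma~\ref{nabmes} applied for this fixed $s$ (or directly by Rademacher's theorem, since $f(\cdot,s)$ is locally Lipschitz as a generalized differentiable function), $\lambda(\Delta_{f(\cdot,s)}^c)=0$. On $\Delta_{f(\cdot,s)}$, the inclusion $\varphi(x,s)\in G_{f(\cdot,s)}(x)$ combined with \cite[Theorem 1.12]{mikh_gup_nor87} forces $\varphi(x,s)=\nabla f(x,s)=\varphi_0(x,s)$ for $\lambda$-a.e.\ $x$. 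Hence $\lambda(A_s)=0$ for every $s$, and the outer integral vanishes.

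There is essentially no obstacle: all measurability questions are handled by Lemma~\ref{nabmes} and the measurability hypothesis on $\varphi$, and the a.e.\ coincidence of $G_{f(\cdot,s)}$ with the classical gradient is precisely the content of Norkin's theorem invoked in the paragraph preceding the proposition.
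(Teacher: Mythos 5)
Your proposal is correct and follows essentially the same route as the paper: the paper's proof is exactly the Fubini argument of Proposition~\ref{lm:cons_sgd} with \cite[Theorem 1]{bolte2019conservative} replaced by \cite[Theorem 1.12]{mikh_gup_nor87}. Your additional care with the measurability of the disagreement set (via Lemma~\ref{nabmes}) is a welcome but minor elaboration of the same argument.
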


\section{SGD Sequences}
\label{sec:sgd}
\subsection{Definition}

Given a probability measure $\nu$ on $\mcB(\RR^d)$, define the
probability space $(\Omega, \mcF, \PP^\nu)$ as $\Omega = \RR^d \times
\Xi^\NN$, $\mcF = \mcB(\RR^d) \otimes \mcT^{\otimes \NN}$, and
$\PP^\nu = \nu \otimes \mu^{\otimes \NN}$. We denote by $(x_0,(\xi_n)_{n\in \NN^*})$
the canonical process on $\Omega\to\RR^d$ \emph{i.e.},
writing an elementary event in the space $\Omega$ as $\omega=(\omega_n)_{n\in \NN}$, we
set $x_0(\omega) = \omega_0$ and
$\xi_n(\omega)=\omega_n$ for each $n\geq 1$.
Under $\PP^\nu$, $x_0$ is a $\RR^d$-valued random variable with the probability
distribution $\nu$, and the process $(\xi_n)_{n\in\NN^*}$ is an independent and
identically distributed (i.i.d.) process such that the distribution of
$\xi_1$ is $\mu$, and $x_0$ and $(\xi_n)$ are independent.
We denote by $\overline\mcF$ the
$\lambda\otimes\mu^{\otimes\NN}$-completion of $\mcF$.

Let $f : \bbR^d \times \Xi \to \RR$ be a $\mcB(\RR^d)\otimes \mcT / \mcB(\RR)$-measurable function.
\begin{definition}
\label{def:sgd}
Assume that $f(\cdot,s)$ is locally Lipschitz continuous for every $s\in\Xi$.
A sequence $(x_n)_{n\in\NN^*}$ of functions on $\Omega\to \bbR^d$
is called an SGD sequence for $f$ with the
step $\gamma > 0$ if there exists an a.e.-gradient $\varphi$ of $f$ such that
\[
 x_{n+1} = x_n - \gamma\varphi(x_n, \xi_{n+1}) \qquad (\forall n \geq 0)\,.
\]
\end{definition}

\subsection{All SGD Sequences Are Almost Surely Equal}

Consider the SGD sequence
\begin{equation}
\label{eq:simpleSGD}
x_{n+1} = x_n - \gamma\varphi_0(x_n,\xi_{n+1}),
\end{equation}
generated by the lazy a.e. gradient $\varphi_0$. Denote by
$\Pg : \RR^d \times \mcB(\RR^d) \to [0,1]$ the kernel
of the homogeneous Markov process defined by this equation, which exists thanks
to the $\mcB(\RR^d)\otimes\mcT$-measurability of $\varphi_0$. This kernel is
defined by the fact that its action on a measurable function
$g : \RR^d \to \RR_+$, denoted as $\Pg g(\cdot)$, is
\begin{equation}
\label{kernel}
\Pg g(x) = \int g(x - \gamma\varphi_0(x,s)) \, \mu(ds) .
\end{equation}
Define $\Gamma$ as the set of all steps $\gamma>0$ such that $P_\gamma$ maps $\cM_{abs}(\RR^d)$ into itself:
$$
\Gamma \eqdef \{\gamma\in (0,+\infty)\,:\, \forall \rho\in
\cM_{abs}(\RR^d),\ \rho\Pg\ll\lambda\}\,.
$$
\begin{proposition}
\label{tout-se-vaut}
Consider $\gamma\in \Gamma$ and $\nu\in\cM_{abs}(\RR^d)$.
Then, each SGD sequence $(x_n)$ with the step $\gamma$ is $\overline\mcF /
\mcB(\RR^d)^{\otimes\NN}$-measurable. Moreover, for any two SGD sequences
$(x_n)$ and $(x'_n)$ with the step $\gamma$, it holds that $\PP^\nu \left[ (x_n) \neq (x'_n) \right] =
0$. Finally, the probability distribution of $x_n$ under $\bbP^\nu$ is Lebesgue-absolutely
continuous for each $n\in\NN$.
\end{proposition}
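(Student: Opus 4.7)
The plan is to compare every SGD sequence to the \emph{lazy} SGD sequence $(x_n^0)$ generated by $\varphi_0$, namely $x_0^0=x_0$ and $x_{n+1}^0=x_n^0-\gamma\varphi_0(x_n^0,\xi_{n+1})$. Since $\varphi_0$ is $\mcB(\RR^d)\otimes\mcT$-measurable by Lemma~\ref{nabmes}, this sequence is genuinely $\mcF$-measurable, and under $\PP^\nu$ the law of $x_n^0$ is $\nu\Pg^n$. An immediate induction on $n$ using $\nu\in\cM_{abs}(\RR^d)$ and $\gamma\in\Gamma$ yields $\nu\Pg^n\in\cM_{abs}(\RR^d)$ for every $n$, which already settles the absolute-continuity assertion for $(x_n^0)$.

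Now let $(x_n)$ be an arbitrary SGD sequence built from an a.e.-gradient $\varphi$. Both $\varphi$ and $\varphi_0$ coincide with $\nabla f$ $\lambda\otimes\mu$-almost everywhere, so $\{\varphi\neq\varphi_0\}$ lies in $\overline{\mcB(\RR^d)\otimes\mcT}$ and is $\lambda\otimes\mu$-null; one can therefore pick a dominating set $N\in\mcB(\RR^d)\otimes\mcT$ with $(\lambda\otimes\mu)(N)=0$ and $\varphi=\varphi_0$ outside $N$. I shall establish by induction on $n$ the existence of $E_n\in\mcF$ with $(\lambda\otimes\mu^{\otimes\NN})(E_n)=0$ and $\{x_n\neq x_n^0\}\subset E_n$. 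The base case is trivial, and for the inductive step, on the complement of $E_n\cup\{(x_n^0,\xi_{n+1})\in N\}$ one has $x_n=x_n^0$ and $\varphi(x_n,\xi_{n+1})=\varphi_0(x_n^0,\xi_{n+1})$, whence $x_{n+1}=x_{n+1}^0$. It then suffices to set $E_{n+1}\eqdef E_n\cup\{(x_n^0,\xi_{n+1})\in N\}$ and show that the second piece is $\lambda\otimes\mu^{\otimes\NN}$-null.

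To do the latter I would localize the non-finite measure $\lambda$ to an arbitrary bounded Borel set $A\subset\RR^d$ with $\lambda(A)\in(0,\infty)$, so that $\nu_A\eqdef\lambda|_A/\lambda(A)\in\cM_{abs}(\RR^d)$. Applying the first paragraph to the initial law $\nu_A$, the distribution of $x_n^0$ under $\PP^{\nu_A}$ is $\nu_A\Pg^n\ll\lambda$. The independence of $\xi_{n+1}$ from $x_n^0$ (with law $\mu$) then makes the joint law of $(x_n^0,\xi_{n+1})$ under $\PP^{\nu_A}$ absolutely continuous with respect to $\lambda\otimes\mu$, which does not charge $N$. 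Hence $\lambda|_A\otimes\mu^{\otimes\NN}$ does not charge $\{(x_n^0,\xi_{n+1})\in N\}$, and exhausting $\RR^d$ by a countable union of such $A$'s finishes the argument.

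To conclude: $x_n$ coincides $\lambda\otimes\mu^{\otimes\NN}$-almost everywhere with the $\mcF$-measurable $x_n^0$, so $x_n$ is $\overline\mcF$-measurable; and since $\nu\ll\lambda$ implies $\PP^\nu\ll\lambda\otimes\mu^{\otimes\NN}$, the same equality holds $\PP^\nu$-a.s., yielding the absolute continuity of the law of $x_n$ under $\PP^\nu$ and the a.s.\ coincidence of any two SGD sequences (each equals $(x_n^0)$ a.s.). The delicate step will be the bookkeeping across the two completions: $\varphi$ is only $\overline{\mcB(\RR^d)\otimes\mcT}$-measurable, so the bad set $\{\varphi\neq\varphi_0\}$ must first be dominated by a true Borel $N$ for the induction to live inside $\mcF$, and the $\sigma$-finite localization on $A$ is what bridges the probabilistic statement under $\PP^{\nu_A}$ with the completion-level null statement under $\lambda\otimes\mu^{\otimes\NN}$.
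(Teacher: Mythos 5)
Your proof is correct and follows essentially the same route as the paper's: compare the arbitrary sequence to the lazy one, induct on $n$ using the absolute continuity of the lazy iterate's law (via $\gamma\in\Gamma$) together with the $\lambda\otimes\mu$-nullity of the set where the a.e.-gradient disagrees with $\varphi_0$, and conclude measurability and a.s.\ coincidence. Your explicit localization of the infinite measure $\lambda$ to bounded sets $A$ in order to pass between $\PP^{\nu_A}$-null and $\lambda\otimes\mu^{\otimes\NN}$-null sets is a slightly more careful rendering of a step the paper treats implicitly, but it is the same argument.
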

Note that $\PP^\nu \ll \lambda \otimes \mu^{\otimes\NN}$ since $\nu\ll\lambda$.
Thus, the probability $\PP^\nu \left[ (x_n) \neq (x'_n) \right]$ is
well-defined as an integral w.r.t.~$\lambda \otimes \mu^{\otimes\NN}$.
\begin{proof}
Let $(x_n)$ be the lazy SGD sequence given by (\ref{eq:simpleSGD}).
Given an a.e. gradient
$\varphi$, define the SGD sequence $(z_n)$ as $z_0 = x_0$, $z_{n+1} = z_n -
\gamma\varphi(z_n, \xi_{n+1})$ for $n\geq 0$.  The sequence $(x_n)$
is ${\mcF}/ \mcB(\RR^d)^{\otimes\NN}$-measurable thanks to Lemma~\ref{nabmes}.
Moreover, applying recursively the property that $\rho\Pg\ll\lambda$ when
$\rho\ll\lambda$, we obtain that the distribution of $x_n$ is absolutely
continuous for each $n\in\NN$.

To establish the proposition, it suffices to show that $z_n$ is $\overline\mcF
/ \mcB(\RR^d)$-measurable for each $n\in\NN$, and that $\PP^\nu[ z_n \neq x_n ]
= 0$, which results in particular in the absolute continuity of the
distribution of $z_n$. We shall prove these two properties by induction on $n$.
They are trivial for $n = 0$. Assume they are true for $n$.  Recall that
$z_{n+1} = z_n - \gamma \nabla f(z_n, \xi_{n+1})$ if $(z_n, \xi_{n+1}) \in A$,
where $A\in \overline{\mcB(\RR^d)\otimes\mcT}$ is such that
$(\lambda\otimes\mu)(A^c) = 0$, and $x_{n+1} = x_n - \gamma \nabla f(x_n,
\xi_{n+1}) \1_{(x_n, \xi_{n+1}) \in\Delta_f}$.  The set $B = \{ \omega\in \Omega \,
: \, z_{n+1} \neq x_{n+1} \}$ satisfies $B \subset B_1 \cup B_2$, where
\[
B_1 = \{ \omega\in \Omega \, : \, z_n \neq x_n \} \quad \text{and} \quad
       B_2 = \{ \omega\in \Omega \, : \, (z_n,\xi_{n+1}) \not\in A \}.
\]
By induction, $B_1 \in \overline\mcF$ and
  $\PP^\nu(B_1) = 0$. By the aforementioned properties of $A$, the
$\overline{\mcF}$-measurability of $z_n$, and the absolute continuity of its
distribution, we also obtain that $B_2 \in \overline\mcF$ and $\PP^\nu(B_2) = 0$.
Thus, $B \in \overline\mcF$ and $\PP^\nu(B) = 0$, and since $x_{n+1}$ is
$\mcF$-measurable, $z_{n+1}$ is $\overline{\mcF}$-measurable.
\end{proof}
Proposition~\ref{tout-se-vaut} means that the SGD sequence does not depend on
the specific a.e. gradient used by the practioner, provided that the law of
$x_0$ has a density and $\gamma\in \Gamma$. Let us make this last assumption
clearer.  Consider for instance $d=1$ and suppose that $f(x,s) = 0.5 x^2$ for
all $s$.  If $\gamma = 1$, the SGD sequence $x_{n+1} = x_n -\gamma x_n$
satisfies $x_1=0$ for any initial point and thus, does not admit a density,
whereas for any other value of $\gamma$, $x_{n}$ has a density for all $n$,
provided that $x_0$ has a density. Otherwise stated,
$\Gamma = \RR_+ \setminus \{ 1 \}$ in this example.

It is desirable to ensure that $\Gamma$ contains almost all the points of
$\RR_+$. The next proposition shows that this will be the case under mild
conditions. The proof is given in~\ref{sec:proofC2}.
\begin{proposition}
\label{prop:C2}
Assume that for $\mu$--almost every $s\in \Xi$, the function $f(\cdot,s)$
satisfies the property that at $\lambda$--almost every point of $\RR^d$,
there is a neighborhood of this point on which it is $C^2$.
Then, $\Gamma^c$ is Lebesgue negligible.
\end{proposition}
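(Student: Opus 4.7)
The plan is to reduce the claim to a Tonelli argument on the Jacobian of the map $T_{\gamma, s}(x) := x - \gamma \nabla f(x, s)$. Given $\gamma > 0$ and $\rho = h \cdot \lambda$ with $h$ a probability density on $\RR^d$, unfolding~(\ref{kernel}) yields, for every Borel $N \subset \RR^d$,
\[
\rho P_\gamma(N) = \int h(x) \, \mathbf{1}_N\bigl(x - \gamma \varphi_0(x, s)\bigr) \, (\lambda \otimes \mu)(dx, ds).
\]
Consequently $\rho P_\gamma \ll \lambda$ for every such $\rho$ as soon as, for every $\lambda$-null Borel set $N$,
\[
(\lambda \otimes \mu)\bigl(\{(x, s) : x - \gamma \varphi_0(x, s) \in N\}\bigr) = 0. \qquad (\star)
\]
It therefore suffices to show that $(\star)$ holds for Lebesgue-almost every $\gamma > 0$.

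For $\mu$-almost every $s$ there is a $\lambda$-full measure set $U_s \subset \RR^d$ on which $f(\cdot, s)$ is locally $C^2$; on $U_s$ one may replace $\varphi_0(\cdot, s)$ by $\nabla f(\cdot, s)$, and the map $T_{\gamma, s}$ is $C^1$ with Jacobian $J_{\gamma, s}(x) := I - \gamma \nabla^2 f(x, s)$. The key observation is that for each fixed pair $(x, s)$ with $x \in U_s$, $\gamma \mapsto \det J_{\gamma, s}(x)$ is a polynomial of degree at most $d$ taking the value $1$ at $\gamma = 0$, so its zero set in $\RR_+$ is finite, hence Lebesgue-null. Applying Tonelli to the set
\[
B := \{(\gamma, x, s) \in \RR_+ \times \RR^d \times \Xi : x \in U_s,\ \det J_{\gamma, s}(x) = 0\},
\]
one gets $(\lambda \otimes \lambda \otimes \mu)(B) = 0$; hence for $\lambda$-a.e.\ $\gamma > 0$ the slice $B_\gamma = \{(x, s) : x \in \Sigma_{\gamma, s}\}$, with $\Sigma_{\gamma, s} := \{x \in U_s : \det J_{\gamma, s}(x) = 0\}$, has $\lambda \otimes \mu$ measure zero.

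Fix a good $\gamma$ in this sense and any $\lambda$-null Borel set $N$. For $\mu$-a.e.\ $s$, $\Sigma_{\gamma, s}$ is $\lambda$-null, so $V_s := U_s \setminus \Sigma_{\gamma, s}$ has full Lebesgue measure. By the inverse function theorem and separability of $\RR^d$, $V_s$ can be covered by countably many open balls on each of which $T_{\gamma, s}$ is a $C^1$-diffeomorphism onto its image, and such diffeomorphisms send $\lambda$-null sets to $\lambda$-null sets in both directions. Thus $T_{\gamma, s}^{-1}(N) \cap V_s$ is $\lambda$-null, which together with $\lambda(V_s^c) = 0$ gives $\lambda(\{x : x - \gamma \nabla f(x, s) \in N\}) = 0$ for $\mu$-a.e.\ $s$. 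Integrating over $s$ yields $(\star)$, hence $\gamma \in \Gamma$, completing the argument.

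The main technical obstacle lies in the measurability of $B$ required to apply Tonelli: this will be handled by building jointly Borel-measurable versions of $\nabla f$ and $\nabla^2 f$ from finite difference quotients (as in Lemma~\ref{nabmes}, iterated once), and defining $U_s$ through the measurable condition that these quotients converge and are locally continuous near $x$. A secondary point is the bookkeeping needed to combine a $\mu$-null exceptional set of $s$ (outside which $U_s$ has full measure) with the $\lambda$-null exceptional set of $\gamma$, but both collapse under a single application of Tonelli to the ambient measure $\lambda \otimes \lambda \otimes \mu$.
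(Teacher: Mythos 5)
Your proposal is correct and follows essentially the same route as the paper's proof: both reduce the claim to showing that $T_{\gamma,s}(x)=x-\gamma\nabla f(x,s)$ is a local diffeomorphism at $\lambda\otimes\mu$-almost every $(x,s)$ for Lebesgue-almost every $\gamma$, via a Fubini/Tonelli argument over $(\gamma,x,s)$ applied to the vanishing set of $\det(I-\gamma\nabla^2 f(x,s))$ (the paper phrases your polynomial-in-$\gamma$ observation as ``$l(x,s,\gamma)=0$ only if $1/\gamma$ lies in the finite spectrum of the pseudo-Hessian,'' a measurable matrix built from limsups of difference quotients of $\nabla f$ --- exactly the construction you defer to at the end). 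The concluding step, covering the good set by countably many neighborhoods on which $T_{\gamma,s}$ is a $C^1$-diffeomorphism and using that such maps preserve null sets, is also identical to the paper's.
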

This assumption holds true as soon as for $\mu$-almost all $s$, $f(\cdot,s)$ is tame, since in this case $\bbR^d$ can be partitioned in manifolds on each of which $f(\cdot, s)$ is $C^2$ (\cite{bolte2007clarke}), and therefore $f(\cdot, s)$ is $C^2$ (in the classical sense) on the union of manifolds of full dimension, and therefore almost everywhere.


\subsection{SGD as a Robbins-Monro Algorithm}

We make the following assumption on the function $f:\RR^d\times \Xi\to\RR$.
\begin{assumption}\label{hyp:model}
  \begin{enumerate}[i)]
\item\label{floclip}
  There exists a measurable function $\kappa \, : \, \bbR^d \times \Xi \to \bbR_{+}$
  s.t. for each $x \in \bbR^d$, $\int \kappa(x,s) \, \mu(ds) < \infty$ and there exists $\varepsilon > 0$ for which
    \[
    \forall y, z \in B(x,\varepsilon), \
    \forall s \in \Xi, \
     | f(y,s) - f(z,s) | \leq \kappa(x,s) \| y - z \| .
    \]
\item There exists $x \in \bbR^d$ such that $f(x,\cdot)$ is $\mu$-integrable.
  \end{enumerate}
\end{assumption}
By this assumption, $f(x,\cdot)$ is $\mu$-integrable for each $x \in \RR^d$, and the function
\begin{equation}
F : \RR^d \to \RR, \quad x \mapsto \int f(x,s) \, \mu(ds)
\label{eq:F}
\end{equation}
is locally Lipschitz on $\RR^d$. We denote by $\cZ$ the set
of (Clarke) critical points of $F$, as defined in Equation~(\ref{eq:S}).

Let $(\mcF_n)_{n\geq 0}$ be the filtration $\mcF_n =
\sigma(x_0,\xi_1,\dots,\xi_n)$. We denote by $\bbE_{n} =
\bbE[ \cdot | \overline{\mcF}_n]$ the conditional expectation
w.r.t. $\overline{\mcF}_n$, where $\overline{\mcF}_n$, stands for the
$\lambda \otimes \mu^{\bbN}$-completion of ${\mcF}_n$.
\begin{theorem}
  \label{th:rm}
Let Assumption~\ref{hyp:model} holds true.
Consider $\gamma\in \Gamma$ and $\nu \in \cM_{abs}(\bbR^d) \cap \cM_{1}(\bbR^d)$.
Let $(x_n)_{n\in \NN^*}$ be a SGD sequence for $f$ with the step $\gamma$.
Then, for every $n\in \NN$, it holds $\PP^\nu$-a.e. that
 \begin{enumerate}[i)]
   \item $F$,  $f(\cdot, \xi_{n+1})$ and $f(\cdot, s)$ (for $\mu$-almost every $s$) are differentiable at $x_n$.
   \item $x_{n+1} = x_n - \gamma \nabla f(x_n, \xi_{n+1})$.
   \item $\bbE_n[x_{n+1}] = x_n - \gamma \nabla F(x_n)$.
 \end{enumerate}
\end{theorem}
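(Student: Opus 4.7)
The plan rests on the key fact---Proposition~\ref{tout-se-vaut}---that under $\PP^\nu$ with $\nu \in \cM_{abs}(\RR^d)$ and $\gamma \in \Gamma$, the law $\nu_n$ of $x_n$ is absolutely continuous with respect to $\lambda$. Combined with Lemma~\ref{nabmes} and a Fubini argument, this will give (i) and (ii); item (iii) is then an interchange of gradient and integral. For (i), Assumption~\ref{hyp:model}\ref{floclip}, integrated against $\mu$, makes $F$ locally Lipschitz on $\RR^d$, so Rademacher's theorem gives $F$ differentiable $\lambda$-a.e., and hence at $x_n$ $\PP^\nu$-a.s. By Lemma~\ref{nabmes}, $(\lambda \otimes \mu)(\Delta_f^c) = 0$. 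Since $\xi_{n+1}$ is independent of $\mcF_n$ with law $\mu$ and $x_n$ is $\mcF_n$-measurable, the joint law of $(x_n,\xi_{n+1})$ is $\nu_n \otimes \mu$, absolutely continuous w.r.t. $\lambda \otimes \mu$; therefore $(x_n,\xi_{n+1}) \in \Delta_f$ almost surely. Applying Fubini to $\Delta_f^c$ against $\nu_n \otimes \mu$ further yields $\mu(\{s : (x,s) \notin \Delta_f\}) = 0$ for $\nu_n$-a.e. $x$, finishing (i). For (ii), let $\varphi$ be the a.e.-gradient underlying the SGD sequence; by Definition~\ref{def:gradient}, $\varphi = \nabla f$ on a $\lambda \otimes \mu$-full set, and the absolute continuity of the law of $(x_n,\xi_{n+1})$ promotes this to $\varphi(x_n,\xi_{n+1}) = \nabla f(x_n,\xi_{n+1})$ $\PP^\nu$-a.s.

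The crux of (iii) is a deterministic interchange lemma: whenever $x \in \RR^d$ is a point at which $F$ is differentiable and $f(\cdot,s)$ is differentiable for $\mu$-a.e. $s$, one has $\nabla F(x) = \int \nabla f(x,s)\,\mu(ds)$. To prove it, fix $v \in \RR^d$; for $t$ small enough, Assumption~\ref{hyp:model}\ref{floclip} dominates $|t^{-1}[f(x+tv,s)-f(x,s)]|$ by the $\mu$-integrable function $s \mapsto \kappa(x,s)\|v\|$, so dominated convergence gives
\[
\lim_{t \to 0} \frac{F(x+tv)-F(x)}{t} = \int \langle \nabla f(x,s), v\rangle\,\mu(ds),
\]
while differentiability of $F$ at $x$ identifies the limit with $\langle \nabla F(x), v\rangle$; letting $v$ vary proves the claim.

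Applying this lemma at the random point $x_n$---where, by (i), the hypotheses hold a.s.---yields $\nabla F(x_n) = \int \nabla f(x_n,s)\,\mu(ds)$ almost surely. Since $\xi_{n+1}$ is independent of $\overline{\mcF}_n$ with law $\mu$ (independence of $\xi_{n+1}$ from $\mcF_n$ is preserved under completion by the standard product-measure argument) and $x_n$ is $\overline{\mcF}_n$-measurable, combining with (ii) gives
\[
\EE_n[x_{n+1}] = x_n - \gamma\,\EE_n[\nabla f(x_n,\xi_{n+1})] = x_n - \gamma \int \nabla f(x_n,s)\,\mu(ds) = x_n - \gamma\, \nabla F(x_n),
\]
which is (iii). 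The main technical step is the interchange lemma, but it is routine thanks to the uniform Lipschitz envelope $\kappa$ supplied by Assumption~\ref{hyp:model}\ref{floclip}; the real structural work has already been carried out in Proposition~\ref{tout-se-vaut} (absolute continuity of $\nu_n$) and Lemma~\ref{nabmes} (negligibility of $\Delta_f^c$).
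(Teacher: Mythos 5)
Your proposal is correct and follows essentially the same route as the paper: Rademacher's theorem plus Lemma~\ref{nabmes} and Fubini for (i), the absolute continuity of the law of $x_n$ (from Proposition~\ref{tout-se-vaut}) to upgrade $\lambda\otimes\mu$-a.e.\ statements to $\PP^\nu$-a.s.\ ones, and the same dominated-convergence interchange $\nabla F(x)=\int\nabla f(x,s)\,\mu(ds)$ for (iii). The only cosmetic difference is that the paper runs the argument as an induction on $n$ while you invoke the absolute continuity of every $\nu_n$ upfront; you might just add a word on the integrability of $x_n$ (inherited from $\nu\in\cM_1$ and the bound $\|\nabla f(x,\cdot)\|\le\kappa(x,\cdot)$) so that $\EE_n[x_{n+1}]$ is well defined.
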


Theorem~\ref{th:rm} is important because it shows that $\PP^\nu$-a.e., the SGD
sequence $(x_n)$ verifies
$$
x_{n+1} =x_n - \gamma \nabla F(x_n) + \gamma\eta_{n+1}
$$
for some random sequence $(\eta_n)$ which is a martingale difference sequence
adapted to $(\overline\mcF_n)$.

\section{Dynamical Behavior}\label{sec:dyn_beh}

\subsection{Assumptions and Result}


In this section we prove that the SGD sequence $(x_n)_{n\in \NN^*}$ (which is by
Theorem~\ref{th:rm}, under the stated
assumptions, unique) closely follows a trajectory of a solution to the DI~(\ref{eq:DI})
as the step size $\gamma$ tends to zero.
To state the main result of this section, we need to strengthen
Assumption~\ref{hyp:model}.
\begin{assumption}
\label{hyp:model-reinf}
The function $\kappa$ of Assumption~\ref{hyp:model} satisfies:
\begin{enumerate}[i)]
  \item There exists a constant $K \geq 0$ s.t. $\int \kappa(x,s) \, \mu(ds) \leq K ( 1 + \| x \|)$ for all $x$.
  \item For each compact set $\mK \subset \RR^d$, $\sup_{x\in \mK} \int \kappa(x,s)^{2} \mu(ds) < \infty$.
\end{enumerate}
\end{assumption}
The first point guarantees the existence of global solutions
to~\eqref{eq:DI} starting from any initial point (see Section~\ref{subsec:dif_incl}).
\begin{assumption}\label{hyp:zero_in_gamma}
  The closure of $\Gamma$ contains $0$.
\end{assumption}
By Proposition~\ref{prop:C2}, Assumption~\ref{hyp:zero_in_gamma} is mild. It holds for instance
if every $f(\cdot, s)$ is a tame function.

We recall that $\cS_{-\partial F}(A)$
is the set of solutions to (\ref{eq:DI}) that start from any point in the set $A\subset \RR^d$.
\begin{theorem}
\label{th:apt}
Let Assumptions~\ref{hyp:model}--\ref{hyp:zero_in_gamma} hold true.
Let $\{(x_n^\gamma)_{n\in \NN^*}:\gamma\in (0,\gamma_0]\}$
be a collection of SGD sequences of steps $\gamma \in (0, \gamma_0]$.
Denote by $\sx^\gamma$ the piecewise affine interpolated process
$$
\sx^\gamma(t) = x_n^\gamma+(t/\gamma-n)(x_{n+1}^\gamma
  -x_n^\gamma)
\qquad \left(\forall t\in [n\gamma, (n+1)\gamma)\right).
$$
Then,
for every compact set $\mK\subset \RR^d$,
  \[
  \forall \varepsilon>0,\ \lim_{\substack{\gamma\to 0\\\gamma\in
      \Gamma}}\left(\sup_{\nu \in \cM_{abs}(\mK)}\, \PP^{\nu}\left(\bs
      d_C(\sx^\gamma,\cS_{-\partial F}(\mK))>\varepsilon\right)
  \right) = 0\, ,
  \]
where the distance ${\bs d}_C$ is defined in~\eqref{eq:dC}.
Moreover, the family of distributions $\{ \PP^{\nu}(\sx^\gamma)^{-1}:\nu
  \in \cM_{abs}(\mK),0<\gamma<\gamma_0,\gamma\in\Gamma \}$ is
  tight.
\end{theorem}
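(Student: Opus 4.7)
The plan is to use Theorem~\ref{th:rm} to rewrite the recursion as a Robbins--Monro scheme
$$x_{n+1}^\gamma = x_n^\gamma - \gamma \nabla F(x_n^\gamma) + \gamma \eta_{n+1}^\gamma,$$
with $\eta_{n+1}^\gamma = \nabla F(x_n^\gamma) - \nabla f(x_n^\gamma, \xi_{n+1})$ an $(\overline{\mcF}_n)$-martingale difference sequence, and then to invoke the standard weak-APT machinery for constant-step stochastic approximation with set-valued dynamics. Tightness of $\{\sx^\gamma\}$ on any interval $[0,T]$ comes first: from Assumption~\ref{hyp:model-reinf}(i) one gets $\|\nabla F(x)\| \leq K(1+\|x\|)$ and a matching $L^2(\mu)$-bound on $\nabla f(x,\cdot)$; combining a discrete Gr\"onwall estimate on $\EE^\nu\|x_n^\gamma\|^2$ with Doob's maximal inequality applied to $\gamma\sum_{k=0}^{n-1}\eta_{k+1}^\gamma$ (and the second-moment control of Assumption~\ref{hyp:model-reinf}(ii) on compact sets) yields $\sup_{t \in [0,T]} \EE^\nu\|\sx^\gamma(t)\|^2 \leq C_{T,\mK}$ uniformly in $\nu \in \cM_{abs}(\mK)$ and $\gamma \in (0,\gamma_0]$. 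Equicontinuity in probability follows from the same estimate applied to increments of $\sx^\gamma$; this already proves the tightness statement.

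To identify weak limits, I would fix a sequence $\gamma_k \downarrow 0$ in $\Gamma$ (available by Assumption~\ref{hyp:zero_in_gamma}) and $\nu_k \in \cM_{abs}(\mK)$. Tightness and the Skorokhod representation theorem let me assume $\sx^{\gamma_k} \to \sx$ almost surely on a common probability space, with $\sx \in C(\RR_+,\RR^d)$ and $\sx(0) \in \mK$. Doob's inequality combined with the moment bound makes the cumulative noise $\gamma\sum_{k \leq t/\gamma}\eta_{k+1}^\gamma$ vanish in probability uniformly on compact time intervals, so the limit satisfies
$$\sx(t) - \sx(0) = -\lim_{k\to\infty}\int_0^t \nabla F(\bar{\sx}^{\gamma_k}(s))\,ds,$$
where $\bar{\sx}^{\gamma_k}(s) = x_{\lfloor s/\gamma_k\rfloor}^{\gamma_k}$. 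The main obstacle is passing from the pointwise gradient $\nabla F$ to the set-valued subdifferential $\partial F$, since a priori $\nabla F(x) \in \partial F(x)$ only at points of differentiability. The key is a convex-averaging argument: for any $\delta>0$ and $h>0$, each average $h^{-1}\int_t^{t+h}\nabla F(\bar{\sx}^{\gamma_k}(s))\,ds$ eventually lies in the closed convex hull of $\nabla F\bigl(B(\sx(t),\delta)\cap \cD_F\bigr)$, which by upper semi-continuity and convexity of $\partial F$ is contained in $\partial F(\sx(t)) + \delta\,\overline{B(0,1)}$; note that Theorem~\ref{th:rm} guarantees $\PP^\nu$-a.s.\ that each $x_n^\gamma \in \cD_F$, so the integrand is indeed a selection of $\partial F$ almost surely. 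Letting $k\to\infty$, then $h\to 0$, then $\delta\to 0$ exhibits $\sx$ as an absolutely continuous curve whose derivative lies in $-\partial F(\sx(\cdot))$ almost everywhere, i.e.\ $\sx \in \cS_{-\partial F}(\mK)$.

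The convergence in probability now follows by a standard contradiction. Should the conclusion fail, there would exist $\varepsilon, \varepsilon_0 > 0$ and sequences $\gamma_k \downarrow 0$ in $\Gamma$ and $\nu_k \in \cM_{abs}(\mK)$ with $\PP^{\nu_k}(\bs d_C(\sx^{\gamma_k}, \cS_{-\partial F}(\mK)) > \varepsilon) \geq \varepsilon_0$. Tightness yields a weakly convergent subsequence whose limit, by the preceding paragraph, is concentrated on $\cS_{-\partial F}(\mK)$. Since this set is closed in $(C(\RR_+,\RR^d), \bs d_C)$ by the linear-growth condition~\eqref{eq:di_lin_growth} applied to $-\partial F$ and the Aubin--Cellina result recalled in Section~\ref{subsec:dif_incl}, the Portmanteau theorem applied to the closed set $\{\bs d_C(\cdot, \cS_{-\partial F}(\mK)) \geq \varepsilon\}$ forces the announced $\varepsilon_0$ lower bound to contradict the concentration on $\cS_{-\partial F}(\mK)$, completing the argument.
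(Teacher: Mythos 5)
Your overall strategy is the same mathematics as the paper's proof, just unpacked: the paper reduces Theorem~\ref{th:apt} to \cite[Theorem 5.1]{bia-hac-sal-stochastics19} after using Theorem~\ref{th:rm} to recast the iteration as a Robbins--Monro scheme whose mean field is a selection of $-\partial F$ (taking care to extend the kernel off the a.s.\ event $\{x_n^\gamma\in\cD_F\}$ by a measurable selection of $\partial F$, a point you handle only implicitly), and the proof of that cited theorem is precisely your tightness / Skorokhod / convex-averaging / Portmanteau chain. Your limit-identification step --- averaging the drift over small time windows and using upper semicontinuity together with convexity of $\partial F$ --- is correct, as is the final contradiction via closedness of $\cS_{-\partial F}(\mK)$.

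The one step that fails as written is the moment bound. Assumption~\ref{hyp:model-reinf} gives only $\int\kappa(x,s)\,\mu(ds)\le K(1+\|x\|)$ globally, and a \emph{second}-moment bound $\sup_{x\in\mK}\int\kappa(x,s)^2\mu(ds)<\infty$ only on compact sets; there is no global estimate of the form $\int\|\nabla f(x,s)\|^2\mu(ds)\le K(1+\|x\|^2)$. Consequently the discrete Gr\"onwall argument yields only a first-moment bound $\sup_{n\gamma\le T}\EE\|x_n^\gamma\|\le C_{T,\mK}$, not the claimed uniform bound on $\EE\|\sx^\gamma(t)\|^2$, and for the same reason the quantity $\gamma^2\sum_{k}\EE\|\eta_{k+1}^\gamma\|^2$ that you feed into Doob's inequality to kill the cumulative noise need not even be finite (the second moment of $\kappa(x,\cdot)$ may grow arbitrarily fast in $\|x\|$ while remaining compatible with the stated assumptions). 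The standard repair --- and what the reference actually does --- is to first obtain tightness of $\sup_{t\le T}\|\sx^\gamma(t)\|$ from the first-moment estimate, then localize by stopping the chain at the exit time of a large ball $\cl(B(0,M))$; on the stopped process the compact-set second-moment bound applies, your Doob argument goes through, and the localization error is made uniformly small in $\gamma$ by taking $M$ large. With this correction the remainder of your argument stands.
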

The proof is given in Section \ref{sec:proofapt}.

Theorem~\ref{th:apt} implies that the interpolated process $\sx^\gamma$ converges in probability as $\gamma\to 0$
to the set of solutions to (\ref{eq:DI}). Moreover, the convergence is uniform w.r.t. to the choice of
the initial distribution $\nu$ in the set of absolutely continuous measures supported by a given compact set.

\subsection{Importance of the Randomization of $x_0$}
\label{sec:discussion}

In this paragraph, we discuss the case where $x_0$ is no longer
random, but set to an arbitrary point in $\RR^d$.  In this case, there
is no longer any guarantee that the iterates $x_n$ only hit the points
where a gradient exist. We focus on the case where $(x_n)$ is a
Clarke-SGD sequence of the form~(\ref{eq:clarkeSGD}), where the
function $\varphi$ is assumed $\mcB(\RR^d)\otimes\mcT/\mcB(\RR^d)$ measurable
for simplicity. By Assumption~\ref{hyp:model},
it is not difficult to see that $\varphi(x,\cdot)$ is $\mu$-integrable for all
$x \in \RR^d$ and, denoting by $\EE(\varphi(x,\cdot))$ the corresponding integral w.r.t. $\mu$,
we can rewrite the iterates under the form:
\[
x_{n+1} = x_n - \gamma \bbE \varphi(x_n, \cdot) + \gamma \eta_{n+1},
\]
where $\eta_{n+1} = \bbE[\varphi(x_n, \cdot)] - \varphi(x_n,
\xi_{n+1})$ is a martingale difference sequence for the
filtration~$(\mcF_n)$.  Obviously, $\EE\varphi(x,\cdot) \in
\EE \partial f(x,\cdot)$.  As said in the introduction, we need
$\EE\varphi(x,\cdot)$ to belong to $\partial F(x)$ in order to make
sure that the algorithm trajectory shadows the DI $\dot\sx(t) \in
- \partial F(\sx(t))$. Unfortunately, the inclusion $\partial F(x)
\subset \EE \partial f(x,\cdot)$ can be strict, which can result in
the fact that the DI $\dot\sx(t) \in - \EE \partial f(\sx(t),\cdot)$
generates spurious trajectories that converge to spurious zeroes. The
following example, which can be easily adapted to an arbitrary
dimension, shows a case where this phenomenon happens.
\begin{example}
\label{ex:dif_incl_neq1}
Take a finite probability space $\Xi = \{ 1, 2\}$ and
$\mu(\{1\}) = \mu(\{2\}) = 1/2$.
Let $f(x, 1) =  2x \1_{x \leq 0} $ and $f(x, 2) = 2x \1_{x \geq 0}$. We
have $F(x) = x$, and therefore $\partial F(0) = \{ 1 \} $, whereas $\partial
f(0, 1) = \partial f(0, 2) = [0, 2]$ and therefore $\int \partial f(0, s)
\mu(\dif s) = [0, 1]$.  We see that $0 \in \EE \partial f(0,\cdot)$ while
$0 \not\in \partial F(0)$. Furthermore, the trajectory defined on $\RR_+$ as
\[
\sx(t) = \left\{ \begin{array}{ll} 1 - t &\text{for} \  t \in [0,1] \\
                                0    &\text{for} \ t > 1
         \end{array}\right. , \quad \sx(0) = 1,
\]
is a solution to the DI $\dot\sx(t) \in - \EE \partial f(\sx(t),\cdot)$, but
not to the DI $\dot\sx(t) \in - \partial F(\sx(t))$.
\end{example}
\begin{example}
  Consider the same setting as in the previous example. Consider a stochastic gradient algorithm
  of the form~\eqref{eq:sgd}, initialized at $x_0=0$ with $\varphi$ such that
  $\varphi(0, 1) = \varphi(0, 2) = 0$. Then, the iterates $x_n^\gamma$ are identically zero.
  This shows that the stochastic gradient descent may converge to a non critical point of $F$.
  Theorem~\ref{th:apt} may fail unless a random initial point is chosen.
\end{example}

\section{Long Run Convergence}
\label{sec:longrun}

\subsection{Assumptions and Result}

As discussed in the introduction, the SGD sequence $(x_n)$ is not expected to
converge in probability to $\cZ$ when the step is constant. Instead, we shall
establish the convergence~\eqref{eq:main_res}. The ``long run'' convergence
referred to here is understood in this sense.

In all this section, we shall focus on the lazy SGD sequences described by
Equation~\eqref{eq:simpleSGD}. This incurs no loss of generality, since any two SGD
sequences are equal $\PP^\nu$-a.e.~by Proposition~\ref{tout-se-vaut}  as long as
$\nu\ll\lambda$. Our starting point is to see the process $(x_n)$ as a Markov
process which kernel $\Pg$ is defined by Equation~\eqref{kernel}. Our first task is
to establish the ergodicity of this Markov process under the convenient
assumptions. Namely, we show that $\Pg$ has a unique invariant probability
measure $\pi_\gamma$, \emph{i.e.}, $\pi_\gamma P_{\gamma} = \pi_\gamma$, and that $\|
P_\gamma^n(x,\cdot) - \pi_\gamma \|_{\text{TV}} \to 0$ as $n\to\infty$ for each
$x \in\RR^d$, where $\|\cdot\|_{\text{TV}}$ is the total variation norm. Further, we need to show that the
family of invariant distributions $\{ \pi_\gamma \}_{\gamma\in(0, \gamma_0]}$
for a certain $\gamma_0 > 0$ is tight. The long run behavior referred to above
is then intimately connected with the properties of the accumulation points of
this family as $\gamma \to 0$. To study these properties, we get back to the DI $\dot\sx \in - \partial F(\sx)$ (we recall that a concise account of the notions relative to this
dynamical system and needed in this paper is provided in
Section~\ref{subsec:dif_incl}).  The crucial point here is to show, with the help of
Theorem~\ref{th:apt}, that the accumulation points of $\{ \pi_\gamma \}$ as
$\gamma\to 0$ are invariant measures for the set-valued flow induced by the DI. In its original form, this idea dates back to the work of
Has'minski\u{\i} \cite{has-63}.  We observe here that while the notion of
invariant measure for a single-valued semiflow induced by, say, an ordinary
differential equation, is classical, it is probably less known in the case of
a set-valued differential inclusion. We borrow it
from the work of Roth and Sandholm \cite{rot-san-13}.

Having shown that the accumulation points of $\{ \pi_\gamma \}$ are invariant
for the DI $ \dot{\sx} \in -\partial F(\sx)$, the final step of the proof is to make use of
Poincar\'e's recurrence theorem, that asserts that the invariant measures of a
semiflow are supported by the so-called Birkhoff center of this semiflow
(again, a set-valued version of Poincar\'e's recurrence theorem is provided in
\cite{aub-fra-las-91,fau-rot-13}). To establish the
convergence~\eqref{eq:main_res}, it remains to show that the Birkhoff center
of the DI $\dot{\sx} \in -\partial F(\sx)$ coincides with $\zer \partial F$. The natural
assumption that ensures the identity of these two sets will be that $F$
admits a chain rule
\cite{cla-led-ste-wol-livre98,bolte2007clarke,dav-dru-kak-lee-19}.

\medskip

Our assumption regarding the behavior of the Markov kernel $P_{\gamma}$ reads
as follows.

\begin{assumption}
\label{markov}
There exist measurable functions $V:\RR^d \to [0,+\infty)$, $p:\RR^d \to [0,+\infty)$,
$\alpha:(0,+\infty) \to (0,+\infty)$ and a constant $C\geq 0$ s.t. the following holds
for every $\gamma\in \Gamma\cap (0,\gamma_0]$.
\begin{enumerate}[i)]
\item\label{smallset} There exists $R > 0$ and a positive Borel measure
$\rho$ on $\RR^d$ ($R,\rho$ possibly depending on $\gamma$) such that
\[
\forall x \in \cl({B}(0,R)), \ \forall A \in \mcB(\RR^d), \
 P_{\gamma}(x,A) \geq \rho(A) .
\]
\item\label{drift} $\sup_{\cl({ B}(0,R))}V<\infty$ and $\inf_{B(0,R)^c}p>0$.
Moreover, for every $x\in \RR^d$,
\begin{equation}
\label{eq:drift}
P_{\gamma} V(x) \leq V(x) - \alpha(\gamma) p(x) +
                      C\alpha(\gamma) \1_{\| x \| \leq R} .
\end{equation}
\item\label{tension}
The function $p(x)$ diverges to infinity as $\| x \| \to\infty$.
\end{enumerate}
\end{assumption}
Assumptions of this type are frequently encountered in the field of Markov
chains. Assumption \ref{markov}--\eqref{smallset} states that
$\cl({B}(0,R))$ is a so-called small set for the kernel $\Pg$, and
Assumption \ref{markov}--\eqref{drift} is a standard drift assumption. Taken
together, they ensure that the kernel $\Pg$ is a so-called Harris-recurrent
kernel, that it admits
a unique invariant probability distribution $\pi_\gamma$, and finally, that
this kernel is ergodic in the sense that $\| P_{\gamma}(x,\cdot) - \pi_\gamma
\|_{\text{TV}} \to 0$ as $n\to\infty$ (see \cite{mey-twe-livre09}).
The introduction of the factors $\alpha(\gamma)$ and $C\alpha(\gamma)$
in Equation~(\ref{eq:drift}) guarantees moreover the tightness
of the family $\{ \pi_\gamma \}_{\gamma\in (0, \gamma_0]}$.

In Section~\ref{subsec-suffis}, we provide sufficient and verifiable
conditions ensuring the validity of Assumption~\ref{markov} for $P_{\gamma}$.

As announced above, we also need:
\begin{assumption}
\label{hyp:chrule}
The function $F$ defined by~\eqref{eq:F} admits a chain rule, namely,
for any absolutely continuous curve $z:\bbR_+\to\bbR^d$,
for almost all $t>0$,
$\forall v\in \partial F(z(t)),\  \scalarp{v}{\dot{z}(t)} =  (F \circ z)'(t)\,.$
\end{assumption}
Assumption~\ref{hyp:chrule} is satisfied as soon as $F$ is path-differentiable,
for instance when $F$ is either convex, regular,
Whitney stratifiable or tame (see~\cite[Proposition 1]{bolte2019conservative} and ~\cite{bolte2007clarke,dav-dru-kak-lee-19}).

Since Assumption~\ref{hyp:zero_in_gamma} is satisfied as soon as $f(\cdot, s)$
is tame for every $s \in \Xi$, one can wonder if it can be somehow coordinated
with Assumption~\ref{hyp:chrule}. Unfortunately, $F$ is not necessarily tame
even if $f(\cdot, s)$ is tame for every $s \in \Xi$.
Nonetheless, one can hope that the practical situations where $f(\cdot, s)$ is
tame and $F$ is not are rare. In particular, $F$ will be tame as soon as $\Xi$
is finite (hence the expectation is just a finite sum), which is the case in
many machine learning models.

\begin{theorem}\label{th:main}
Let Assumptions~\ref{hyp:model}-\ref{hyp:zero_in_gamma} and \ref{markov}-\ref{hyp:chrule} hold true.
Let $\{(x_n^\gamma)_{n\in \NN^*}:\gamma\in (0,\gamma_0]\}$
be a collection of SGD sequences of step-size $\gamma$.
Then, the set $ \cZ = \{ x \colon 0 \in \partial F(x) \}$ is nonempty and for all $\nu\in \cM_{abs}(\RR^d)$ and all $\varepsilon>0$,
\begin{equation}
\label{cvg}
\limsup_{n\to\infty}
 \PP^\nu\left( \bs d( x_n^\gamma, \cZ) > \varepsilon \right)
 \xrightarrow[\underset{\gamma \in \Gamma}{\gamma\to 0}]{} 0 .
\end{equation}

%

\end{theorem}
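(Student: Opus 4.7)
The strategy, borrowing from Has'minskii's method \cite{has-63} in the set-valued form developed by Roth and Sandholm \cite{rot-san-13}, is to view $(x_n^\gamma)$ as a Markov chain with kernel $\Pg$ and study the asymptotics of its invariant measures $\pi_\gamma$ as $\gamma\downarrow 0$. The argument decomposes into four steps: (i) existence, uniqueness and ergodicity of $\pi_\gamma$ for each $\gamma\in\Gamma\cap(0,\gamma_0]$; (ii) tightness of the family $\{\pi_\gamma\}$; (iii) identification of every weak accumulation point $\pi_\star$ as an invariant measure for the set-valued semiflow of the DI $\dot\sx\in-\partial F(\sx)$; (iv) localization of $\pi_\star$ on $\cZ$ via a Lyapunov argument and the set-valued Poincar\'e recurrence theorem, followed by a Portmanteau argument.

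For step (i), Assumption~\ref{markov}--\eqref{smallset} provides a small set and Assumption~\ref{markov}--\eqref{drift} is a geometric drift, so standard Meyn-Tweedie results \cite{mey-twe-livre09} give a unique $\pi_\gamma$ with $\pi_\gamma\Pg=\pi_\gamma$ and $\|\Pg^n(x,\cdot)-\pi_\gamma\|_{\text{TV}}\to 0$ for every $x$. For step (ii), applying $\pi_\gamma$ to both sides of~\eqref{eq:drift} and cancelling $\pi_\gamma(V)$ (after a routine truncation to ensure finiteness) yields $\pi_\gamma(p)\leq C$ uniformly in $\gamma$. Assumption~\ref{markov}--\eqref{tension} makes the sublevel sets of $p$ bounded, whence $\pi_\gamma(\{p>M\})\leq C/M$ uniformly in $\gamma$, which is exactly tightness.

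The core of the proof is step (iii). Fix a subsequence $\gamma_k\to 0$ along which $\pi_{\gamma_k}$ converges weakly to some $\pi_\star$, and let $\sx^{\gamma_k}$ be the interpolated process of Theorem~\ref{th:apt} with initial law $\pi_{\gamma_k}$. By stationarity, the law of $\sx^{\gamma_k}(n\gamma_k)$ equals $\pi_{\gamma_k}$ for every $n$, and a short calculation using the linear growth of the increments controls the gap to the nearest grid point. The tightness of the path distributions granted by the second conclusion of Theorem~\ref{th:apt}, combined with step (ii), lets us pass to a further subsequence along which $\sx^{\gamma_k}$ converges in distribution in $C(\RR_+,\RR^d)$ to some probability $\mathbb Q$. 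The weak asymptotic pseudotrajectory property then forces $\mathbb Q$ to be supported by $\cS_{-\partial F}(\RR^d)$, while by construction every time-marginal of $\mathbb Q$ equals $\pi_\star$; this is precisely the Roth-Sandholm definition of invariance of $\pi_\star$ under the set-valued semiflow. The main technical obstacle here is that $\pi_{\gamma_k}$ is not compactly supported, whereas Theorem~\ref{th:apt} requires initial laws on compacts; this is handled by truncating $\pi_{\gamma_k}$ to a large ball (whose complement carries uniformly small mass by step (ii)) and exploiting the uniformity of the APT conclusion over $\nu\in\cM_{abs}(\mK)$.

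For step (iv), Assumption~\ref{hyp:chrule} makes $F$ a strict Lyapunov function for the DI: for any DI solution $\sx$ and almost every $t$, choosing $v=-\dot\sx(t)\in\partial F(\sx(t))$ in the chain rule yields $(F\circ\sx)'(t)=-\|\dot\sx(t)\|^2$, which is strictly negative outside $\cZ$. The Birkhoff center of the set-valued semiflow thus coincides with $\cZ$, and the set-valued Poincar\'e recurrence theorem of \cite{aub-fra-las-91,fau-rot-13} forces $\pi_\star(\cZ)=1$; in particular $\cZ\neq\emptyset$. To finish, fix $\varepsilon>0$ and observe that $\{x:\bs d(x,\cZ)\geq\varepsilon\}$ is closed and disjoint from $\cZ$, so that the Portmanteau theorem gives $\limsup_{\gamma\to 0,\,\gamma\in\Gamma}\pi_\gamma(\{x:\bs d(x,\cZ)\geq\varepsilon\})=0$. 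Finally, for fixed $\gamma\in\Gamma\cap(0,\gamma_0]$ and $\nu\in\cM_{abs}(\RR^d)$, the total variation convergence from step (i) gives $\lim_n\PP^\nu(\bs d(x_n^\gamma,\cZ)\geq\varepsilon)=\pi_\gamma(\{x:\bs d(x,\cZ)\geq\varepsilon\})$, and letting $\gamma\to 0$ establishes~\eqref{cvg}.
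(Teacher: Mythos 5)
Your proposal is correct and follows essentially the same route as the paper: ergodicity of $\Pg$ from the small-set and drift conditions, tightness of $\{\pi_\gamma\}$ via the truncated drift inequality, identification of weak accumulation points as flow-invariant measures by restricting $\pi_\gamma$ to a compact of large mass and invoking Theorem~\ref{th:apt}, and finally the chain-rule Lyapunov argument plus the set-valued Poincar\'e recurrence theorem to localize the limit on $\cZ$, concluding with Portmanteau and total-variation convergence. The only minor gloss is that the Roth--Sandholm invariance requires shift-invariance of the limiting path measure, not merely equality of its time-marginals with $\pi_\star$, but this follows from stationarity exactly as in the paper.
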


\subsection{On Assumption~\ref{markov}}
\label{subsec-suffis}

In this paragraph, we provide sufficient conditions under which Assumption~
\ref{markov} hold true.
A simple way to ensure the truth of
Assumption~\ref{markov}-\eqref{smallset} is to add a small random perturbation
to the function $\varphi_0(x,s)$.  Formally, we modify algorithms described
by Equation~\eqref{eq:simpleSGD} and~\eqref{eq:sgd_proj}, and write
\begin{equation*}
  \begin{split}
    x_{n+1} &= x_n - \gamma \varphi_0(x_n,\xi_{n+1}) + \gamma\epsilon_{n+1}\\
  \end{split}
\end{equation*}
where $(\epsilon_n)$ is a sequence of centered i.i.d.~random variables of law $\mu^d$, independent from
$\{x_0, (\xi_n)\}$, and such that the distribution of $\epsilon_1 \sim \mu^d$ has a continuous
and positive density on $\RR^d$.   The Gaussian case
$\epsilon_1 \sim {\mathcal N}(0, a I_d)$ where $a > 0$ is some small variance is of
course a typical example of such a perturbation.

 Consider now a fixed $\gamma$ and denote by
$\widetilde P$ the Markov kernel induced by the modified equation.

\begin{proposition}
\label{additive-ss}
Let Assumption~\ref{hyp:model-reinf} hold true. Then, for each $R > 0$,
there exists $\varepsilon > 0$ such that
\[
\forall x \in \cl({B}(0,R)), \ \forall A \in \mcB(\RR^d), \
 \widetilde{P}(x,A)
   \geq \varepsilon \, \lambda (A \cap \cl({B}(0,1))) ,
\]
Thus, Assumption~\ref{markov}-\eqref{smallset} is satisfied for $\widetilde{P}$.
\end{proposition}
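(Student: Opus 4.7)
The plan is to exploit that $\gamma\epsilon_1$ has a continuous, strictly positive density on $\RR^d$, which makes $\widetilde{P}(x,\cdot)$ absolutely continuous with a density bounded below by a positive constant, uniformly over $x \in \cl(B(0,R))$ and $y \in \cl(B(0,1))$. Once this uniform lower bound $\varepsilon$ is established, integrating it over $A \cap \cl(B(0,1))$ yields the first claim, and the small-set condition Assumption~\ref{markov}--\eqref{smallset} follows by taking $\rho(\cdot) = \varepsilon\,\lambda(\cdot \cap \cl(B(0,1)))$.

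Concretely, let $p_\epsilon$ denote the density of $\epsilon_1$. Conditioning on $\xi_1 = s$, the random variable $x - \gamma\varphi_0(x,s) + \gamma\epsilon_1$ has density $y \mapsto \gamma^{-d} p_\epsilon\bigl(\gamma^{-1}(y - x) + \varphi_0(x,s)\bigr)$, whence by Fubini, for every $A \in \mcB(\RR^d)$,
$$
\widetilde{P}(x,A) = \int_A q(x,y)\,dy, \qquad q(x,y) := \int \gamma^{-d}\, p_\epsilon\!\left(\gamma^{-1}(y-x) + \varphi_0(x,s)\right)\mu(ds).
$$
It then remains to show $q(x,y) \geq \varepsilon > 0$ on $\cl(B(0,R)) \times \cl(B(0,1))$ for some constant $\varepsilon$ depending only on $R$ and $\gamma$.

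The only non-trivial step is to restrict the outer integration to a subset of $\Xi$ on which $\varphi_0(x,s)$ is uniformly controlled. The local Lipschitz estimate of Assumption~\ref{hyp:model}-i) yields $\|\varphi_0(x,s)\| \leq \kappa(x,s)$ pointwise (the bound is trivial at non-differentiability points, where $\varphi_0(x,s)=0$). Setting $M := 2K(1+R)$ and invoking Markov's inequality together with Assumption~\ref{hyp:model-reinf}-i), the set $S_x := \{s : \kappa(x,s) \leq M\}$ satisfies $\mu(S_x) \geq 1/2$ uniformly in $x \in \cl(B(0,R))$. For every such $x$, every $y \in \cl(B(0,1))$, and every $s \in S_x$, the argument of $p_\epsilon$ lies in the compact ball $\cl(B(0,L))$ with $L := (1+R)/\gamma + M$. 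Since $p_\epsilon$ is continuous and strictly positive, $\delta := \min_{\cl(B(0,L))} p_\epsilon > 0$, and restricting the outer integral in $q(x,y)$ to $S_x$ gives $q(x,y) \geq \tfrac{1}{2}\gamma^{-d}\delta =: \varepsilon$.

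The main (mild) obstacle is precisely this uniform control of $\varphi_0(x,s)$: without the $L^1(\mu)$-bound on $\kappa(x,\cdot)$ provided by Assumption~\ref{hyp:model-reinf}-i), the shift $\varphi_0(x,s)$ could push the argument of $p_\epsilon$ outside every compact set, and the continuity and positivity of $p_\epsilon$ alone would not produce a uniform lower bound. That integrability hypothesis is exactly what makes the argument go through.
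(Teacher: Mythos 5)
Your proof is correct and follows essentially the same route as the paper's: both use Markov's inequality on the integrability bound $\int\kappa(x,s)\,\mu(ds)\leq K(1+\|x\|)$ from Assumption~\ref{hyp:model-reinf} to confine the drifted point $x-\gamma\varphi_0(x,s)$ to a fixed compact ball on a set of $\mu$-measure at least $1/2$, and then invoke the continuity and strict positivity of the noise density on a compact set to obtain the uniform minorization. The only cosmetic difference is that you condition on $s$ and write the transition density explicitly, whereas the paper phrases the same estimate as a lower bound on the convolution of the law of $x-\gamma\varphi_0(x,\xi_1)$ with the law of $\gamma\epsilon_1$.
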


We now turn to the assumptions \ref{markov}-\eqref{drift} and
\ref{markov}-\eqref{tension}.

\begin{proposition}
\label{prop:PHsufficient}
Assume that there exists $R \geq 0$, $C>0$, and a measurable function
$\beta:\Xi\to \RR_+$ such that the following conditions hold:
\begin{enumerate}[i)]
\item For every $s \in \Xi$, the function $f(\cdot, s)$ is differentiable outside the ball $\cl(B(0,R))$. Moreover,
for each $x , x' \not\in \cl( B(0,R))$, $\| \nabla f(x,s) - \nabla f(x',s) \| \leq \beta(s) \|x - x'\|$
and $\int \beta^2d\mu<\infty$.
\item For all $x\not\in \cl(B(0,R))$, $\int \| \nabla f(x,s) \|^2 \mu(ds) \leq C ( 1 + \| \nabla F(x) \|^2)$.
\item $\lim_{\|x\|\to\infty} \| \nabla F(x) \|=+\infty$.
\item Function $F$ is lower bounded \emph{i.e.}, $\inf F>-\infty$.
\end{enumerate}
Then, it holds that
\begin{equation}
\label{Fdrift2}
\Pg F(x) \leq F(x)
   - \gamma (1 - \gamma K) \1_{\| x \| > 2R}
       \| \nabla F(x) \|^2 + \gamma^2 K \1_{\| x \| > 2R}
 + \gamma K \1_{\| x \| \leq 2R}
\end{equation}
for some constant $K > 0$.
In particular, Assumptions \ref{markov}-\eqref{drift} and
\ref{markov}-\eqref{tension} hold true.
\end{proposition}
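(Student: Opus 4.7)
The plan is to establish the drift inequality~\eqref{Fdrift2} by a descent-lemma argument along each SGD step $y_s := x - \gamma \nabla f(x, s)$, then to deduce Assumption~\ref{markov}-(ii)-(iii) via a Lyapunov construction. As a preliminary, hypothesis (i) together with Cauchy--Schwarz ($\int \beta^2\,d\mu < \infty$) justifies differentiating $F$ under the integral sign on the open set $\{\|x\|>R\}$: $F$ is $C^{1,1}$ there with $\nabla F(x) = \int \nabla f(x,s)\,d\mu(s)$ and Lipschitz modulus $\bar\beta := \int \beta\,d\mu$ for $\nabla F$. The argument then splits on $\|x\| \leq 2R$ versus $\|x\| > 2R$, using throughout that $\|\varphi_0(x,s)\| \leq \kappa(x,s)$ at differentiability points.

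For $\|x\| \leq 2R$, the inequality reduces to $\Pg F(x) \leq F(x) + \gamma K$. This follows from the local Lipschitz property of $F$ on the compact set $\cl(B(0, 3R))$ (Assumption~\ref{hyp:model}) together with $\int \kappa(x, s)\,d\mu(s) \leq K(1 + 2R)$ (Assumption~\ref{hyp:model-reinf}-(i)): split $\Pg F(x) - F(x)$ according to whether $y_s \in \cl(B(0, 3R))$; the main event contributes $O(\gamma)$, and the rare event $\{y_s \notin \cl(B(0, 3R))\}$ (of $\mu$-mass $O(\gamma^2)$ by Chebyshev and Assumption~\ref{hyp:model-reinf}-(ii)) contributes $O(\gamma^2)$ thanks to the at-most-quadratic growth of $F$ at infinity. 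For $\|x\| > 2R$, introduce the \emph{good set} $E := \{s : \gamma\|\nabla f(x,s)\| < R\}$; on $E$ the segment $[x, y_s]$ lies entirely in $\{\|z\| > R\}$ since every point $x + t(y_s - x)$ has norm at least $\|x\| - \gamma\|\nabla f(x,s)\| > R$. The descent lemma for $F$ then yields
\begin{equation*}
  F(y_s) - F(x) \leq -\gamma\langle \nabla F(x), \nabla f(x,s)\rangle + \tfrac{\bar\beta\gamma^2}{2}\|\nabla f(x,s)\|^2 .
\end{equation*}
Integrating over $s \in E$, extending the linear term to all of $\Xi$ (the $E^c$-correction being $O(\gamma^2(1+\|\nabla F(x)\|^2))$ by Young's inequality together with (ii)), and bounding the quadratic term via (ii), gives the main contribution $-\gamma\|\nabla F(x)\|^2 + \gamma^2 K_1(1+\|\nabla F(x)\|^2)$.

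The main obstacle is controlling the \emph{bad-set} contribution $\int_{E^c}[F(y_s)-F(x)]\,d\mu(s)$: the descent lemma fails when the trajectory crosses $\cl(B(0,R))$, so control must come from the global shape of $F$. Combining $\mu(E^c) \leq \gamma^2 C(1+\|\nabla F(x)\|^2)/R^2$ (Markov's inequality plus (ii)) with the at-most-quadratic radial growth of $F$ (from Lipschitz $\nabla F$ outside the ball and lower-boundedness via (iv)) shows that this contribution is itself $O(\gamma^2(1+\|\nabla F(x)\|^2))$, which is absorbed in the constant $K$ of~\eqref{Fdrift2}. Once~\eqref{Fdrift2} is in hand, set $V := F - \inf F \geq 0$ (finite by (iv)) and $\alpha(\gamma) := \gamma$; restricting $\gamma \leq \gamma_0$ with $1 - \gamma_0 K \geq 1/2$, the inequality becomes $\Pg V(x) \leq V(x) - (\gamma/2)\|\nabla F(x)\|^2 \1_{\|x\| > 2R} + \gamma K_2$. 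By (iii), pick $R' > 2R$ so that $\|\nabla F(x)\|^2 \geq 4(K_2 + 1)$ for $\|x\| > R'$, and set $p(x) := \max(\|\nabla F(x)\|^2/4 - K_2,\, 0)$: this $p$ is non-negative, bounded on compact sets, and diverges to $\infty$ with $\|x\|$ by (iii), so the rearranged inequality verifies Assumption~\ref{markov}-(ii) with radius $R'$ and some constant $C$, while Assumption~\ref{markov}-(iii) follows immediately.
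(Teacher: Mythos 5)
Your overall architecture --- split on $\|x\|\le 2R$ versus $\|x\|>2R$, isolate a good event on which a second-order estimate applies because the relevant points stay outside $\cl(B(0,R))$, and treat the complement as a rare event --- is the same in spirit as the paper's, which uses Lebourg's mean value theorem ($F(x_{n+1})-F(x_n)=-\gamma\langle \zeta_n,\nabla f(x_n,\xi_{n+1})\rangle$ with $\zeta_n\in\partial F(u_n)$ for an intermediate point $u_n$) in place of your descent lemma, and conditions on $\|u_n\|\le R$ in place of your set $E^c$. The $\|x\|\le 2R$ case and the final Lyapunov construction ($V=F-\inf F$, $p$ built from $\|\nabla F\|^2$) are fine.

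The gap is in the quantitative control of the bad event when $\|x\|>2R$. Your bound $\mu(E^c)\le\gamma^2C(1+\|\nabla F(x)\|^2)/R^2$ is not uniform in $x$, and each time it multiplies another $x$-dependent quantity the product escapes the right-hand side of~\eqref{Fdrift2}. Concretely: (a) bounding $\int_{E^c}[F(y_s)-F(x)]\,d\mu$ by the quadratic growth of $F$ yields a term of order $\gamma^2\|x\|^2(1+\|\nabla F(x)\|^2)$; (b) the correction for extending the linear term from $E$ to $\Xi$ gives, after Cauchy--Schwarz or Young, a term of order $\gamma^2\|\nabla F(x)\|(1+\|\nabla F(x)\|)^2$ (or a term of order $\gamma(1+\|\nabla F(x)\|^2)$, which kills the leading drift). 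Neither is $O(\gamma^2(1+\|\nabla F(x)\|^2))$: hypothesis (iii) only forces $\|\nabla F(x)\|\to\infty$, possibly as slowly as $\sqrt{\log\|x\|}$, so for fixed $\gamma$ these terms eventually dominate $-\gamma\|\nabla F(x)\|^2$ and neither~\eqref{Fdrift2} nor Assumption~\ref{markov}-\eqref{drift} follows. The remedy used in the paper, which you can import into your framework: hypothesis (i) gives the linear growth $\|\nabla f(x,s)\|\le\beta'(s)(1+\|x\|)$ with $\beta'$ square-integrable; the event that the segment meets $\cl(B(0,R))$ forces $\gamma\|\nabla f(x,s)\|\ge\|x\|-R$, hence $\beta'(s)\ge(\|x\|-R)/(\gamma(1+\|x\|))\ge R/(\gamma(1+2R))$ for $\|x\|>2R$ --- a threshold independent of $x$ --- so the bad event has probability $o(\gamma^2)$ \emph{uniformly in} $x$; and on that event one controls $F(y_s)-F(x)$ via Lebourg's theorem, the intermediate subgradient being uniformly bounded when the intermediate point lies in the compact ball, before closing with Cauchy--Schwarz. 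Note that your own good set $E=\{s:\gamma\|\nabla f(x,s)\|<R\}$ only yields the non-uniform threshold $\beta'(s)\ge R/(\gamma(1+\|x\|))$ on its complement; it should be enlarged to $\{s:\gamma\|\nabla f(x,s)\|<\|x\|-R\}$ for this argument to work.
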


We finally observe that this proposition can be easily adapted to the
case where the kernel $\Pg$ is replaced with the kernel $\widetilde P$ of
Proposition~\ref{additive-ss}.

\section{The Projected Subgradient Algorithm}
\label{sec:proj-sgd}

In many practical settings, the conditions of
Proposition~\ref{prop:PHsufficient} that ensure the truth of
Assumptions~\ref{markov}--\eqref{drift} and~\ref{markov}--\eqref{tension} are
not satisfied. This is for instance the case when the function $f$ is described
by Equation~\eqref{cnn} with the mappings $\sigma_\ell$ at the right hand side
of this equation being all equal to the ReLU function.  In such situations, it
is often pertinent to replace the SGD sequence with a \emph{projected} version
of the algorithm. Given an a.e.-gradient $\varphi$ of the function $f$ and a
non empty compact and convex set $\cK \subset \RR^d$, a \emph{projected SGD
sequence} $(x_n^{\gamma,\cK})$ is given by the recursion
\begin{equation}
\label{eq:sgd_proj}
x_0^{\gamma,\cK}=x_0, \quad
  x_{n+1}^{\gamma,\cK} =
  \Pi_{\cK}(x_n^{\gamma,\cK} - \gamma \varphi(x_n^{\gamma,\cK}, \xi_{n+1}))
   \, ,
\end{equation}
where $\Pi_\cK$ stands for a Euclidean projection onto $\cK$.  Our purpose is
to generalize Theorem~\ref{th:apt} to this situation. This generalization is
not immediate for several reasons.  First, the projection step is likely to
introduce spurious local minima. As far as the iterates~\eqref{eq:sgd_proj} are
concerned, the role of differential inclusion~\eqref{eq:DI} is now played by
the differential inclusion:
\begin{equation}
  \label{eq:projDI}
  \dot \sx(t) \in - \partial F(\sx(t))-\cN_\cK(\sx(t))\,,
\end{equation}
where $\cN_\cK(x)$ stands the normal cone of $\cK$ at point $x$. The set of
equilibria of the above differential inclusion coincides with the set
$$
\cZ_\cK := \{x\in \RR^d\,:\,0\in -\partial F(x) - \cN_\cK(x)\}\,,
$$
which we shall refer to as the set of Karush-Kuhn-Tucker points.  A second
theoretical difficulty is related to the fact that
Proposition~\ref{tout-se-vaut} does no longer hold. Indeed, it can happen $x_0$
has a density, but the next iterates $x_n^{\gamma,\cK}$ don't.  The reason is
that $x_n^{\gamma,\cK}$ generally has a non zero probability to be in the
(Lebesgue negligible) border of $\cK$, that is, $\cl(\cK)\setminus
\inter(\cK)$, where $\cl(\cK)$ and $\inter(\cK)$ respectively stand for the
closure and the interior of $\cK$.

We shall focus here on the case where $\cK = \cl({B}(0,r))$ with $r>0$.  We shall
use $\Pi_r$, $x_n^{\gamma,r}$, $\cN_r$ as shorthand notations for $\Pi_{\cl({B}(0,r))}$,
$x_n^{\gamma,\cl({B}(0,r))}$, and $\cN_{\cl({B}(0,r))}$ respectively. In this case $\cN_r(x) = \{0\}$
if $\norm{x} < r$, $\cN_r(x) = \{ \lambda x: \lambda \geq 0\}$ if $\norm{x} =
r$ and $\cN_r(x) = \emptyset$ otherwise.

We make the following assumption.
\begin{assumption}\label{hyp:proj_ker}
  For every $x \in \bbR^d$, the law of $\varphi_0(x, \xi)$, where $\xi \sim \mu$, is absolutely continuous relatively to Lebesgue.
\end{assumption}

Assumption~\ref{hyp:proj_ker} is much stronger than
Assumption~\ref{hyp:zero_in_gamma}. Indeed, it implies that the distribution of
$x_n^{\gamma,r} - \gamma \varphi(x_n^{\gamma,r}, \xi_{n+1})$ is always
Lebesgue-absolutely continuous. It is useful to note though that
Assumption~\ref{hyp:proj_ker} holds upon adding at each step a small random
perturbation to $\varphi_0$ as in Section~\ref{subsec-suffis} above.

In order to state our first result in this framework, we need to introduce some
new notations.  We let $\SSS(r) := \{ x: \norm{x} = r, x\in \bbR^d \}$ be the
sphere of radius $r$.
By \cite[Theorem 2.49]{folland2013real}, there is a unique measure\footnote{As it is clear from Equation~\eqref{eq:polar_rep} we can see $(\lambda^1,\varrho_1)$ as a polar coordinates representation of the Lebesgue measure $\lambda^d$.} $\varrho_1$ on
$\SSS(1)$ such that for any positive function $f : \bbR^d \rightarrow \bbR$, we
have:
\begin{equation}
\label{eq:polar_rep}
  \int f(x) \lambda^d(\dif x) =
  \int_{0}^{\infty} \int_{\SSS(1)}
     f(r \theta)r^{d-1} \varrho_1(\dif \theta) \lambda^1(\dif r) \, .
\end{equation}
We define the measure $\varrho_r$ on $\SSS(r)$ as $\varrho_r(A) =
\varrho_1(A/r)$ for each Borel set $A \subset \SSS(r)$. We denote as $\cM^r$ the set of measures $\nu = \nu_1 +
\nu_2$, where $\nu_1 \in \cM_{abs}$ and $\nu_2 \ll \varrho_r$. For a set $\mC \subset \bbR^d$ we define $\cM^r(\mC)$ as the measures in $\cM^r$  that are supported on $\mC$. Notice that $\cM_{abs}(\mC) \subset \cM^r(\mC)$.


The next proposition, which is proven in the same way as
Proposition~\ref{tout-se-vaut}, shows that for almost every $r>0$, all
projected SGD sequences are almost surely equal.
\begin{proposition}
  \label{tout-se-vaut-proj}
Let Assumption~\ref{hyp:proj_ker} hold true. Then, for almost every $r>0$, $\forall \nu \in \cM^r$, each projected SGD sequence $(x_n^{\gamma,r})$ is
$\overline\mcF / \mcB(\RR^d)^{\otimes\NN}$-measurable. Moreover, for any two
projected SGD sequences $(x_n^{\gamma,r})$ and $(y_n^{\gamma,r})$, it holds
that $\PP^\nu \left[ (x_n^{\gamma,r}) \neq (y_n^{\gamma,r}) \right] = 0$.
Finally, under $\bbP^{\nu}$, for every $n \in \bbN$, the probability
distribution of $x_n^{\gamma,r}$ is in $\cM^r$.
\end{proposition}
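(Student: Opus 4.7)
The plan is to mirror the proof of Proposition~\ref{tout-se-vaut}, with two adaptations: (i) replace the class $\cM_{abs}$ with $\cM^r$ to accommodate the mass created on the sphere $\SSS(r)$ by the projection, and (ii) restrict to almost every $r>0$ so that the non-differentiability set of $f$ is negligible on $\SSS(r)\times\Xi$. Let $\widetilde P_{\gamma,r}$ be the kernel associated with the lazy projected recursion $y_{n+1}=\Pi_r(y_n-\gamma\varphi_0(y_n,\xi_{n+1}))$.

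First, I would establish that $\widetilde P_{\gamma,r}$ maps $\cM^r$ into $\cM^r$. Let $\nu\in \cM^r$. By Assumption~\ref{hyp:proj_ker}, for every $x$ the law of $x-\gamma\varphi_0(x,\xi)$ is absolutely continuous with respect to $\lambda^d$; integrating over $x$ against $\nu$ (both its absolutely continuous part on $\RR^d$ and its part on $\SSS(r)$ absolutely continuous with respect to $\varrho_r$) shows that the pre-projection distribution is absolutely continuous on $\RR^d$. The restriction of this measure to the open ball $B(0,r)$ is invariant under $\Pi_r$ and stays in $\cM_{abs}$. For the mass outside the ball, write the measure in polar coordinates using~\eqref{eq:polar_rep}: a point $r'\theta$ with $r'>r$ projects to $r\theta\in\SSS(r)$, so the pushforward on $\SSS(r)$ has density $\theta\mapsto\int_r^{\infty}h(r'\theta)(r')^{d-1}\lambda^1(\dif r')$ with respect to $\varrho_r$ (where $h$ is the density of the pre-projection measure). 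Hence $\nu\widetilde P_{\gamma,r}\in\cM^r$.

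Next, I would select the exceptional set of $r$. By Lemma~\ref{nabmes}, $(\lambda^d\otimes\mu)(\Delta_f^c)=0$. Using polar coordinates and Fubini,
\[
0=(\lambda^d\otimes\mu)(\Delta_f^c)=\int_0^{\infty}r^{d-1}(\varrho_r\otimes\mu)\bigl(\Delta_f^c\cap(\SSS(r)\times\Xi)\bigr)\,\lambda^1(\dif r),
\]
so for Lebesgue-almost every $r>0$, $(\varrho_r\otimes\mu)(\Delta_f^c\cap(\SSS(r)\times\Xi))=0$. Combined with the known fact $(\lambda^d\otimes\mu)(\Delta_f^c)=0$, this ensures that for any measure in $\cM^r$ and any $\mu$-i.i.d.\ noise, the event $\{(y,\xi)\notin\Delta_f\}$ has zero probability. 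Fix such an $r$ from now on.

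Finally, the induction goes through as in Proposition~\ref{tout-se-vaut}. For the lazy projected sequence $(x_n^{\gamma,r})$, Step~1 gives by induction that the law of $x_n^{\gamma,r}$ lies in $\cM^r$, hence, by the preceding paragraph, $(x_n^{\gamma,r},\xi_{n+1})\in\Delta_f$ almost surely. Consequently, for any other a.e.-gradient $\varphi$ defining a projected SGD sequence $(y_n^{\gamma,r})$, we have $\varphi(x_n^{\gamma,r},\xi_{n+1})=\varphi_0(x_n^{\gamma,r},\xi_{n+1})=\nabla f(x_n^{\gamma,r},\xi_{n+1})$ on a $\PP^\nu$-full set; since $\Pi_r$ is deterministic, the recursive argument of Proposition~\ref{tout-se-vaut} yields $\overline{\mcF}$-measurability of $(y_n^{\gamma,r})$ and $\PP^\nu[(x_n^{\gamma,r})\neq(y_n^{\gamma,r})]=0$. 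The main obstacle is Step~1, specifically identifying the absolutely continuous character on $\SSS(r)$ of the pushforward of $\Pi_r$, which motivates the introduction of $\cM^r$ and the polar decomposition~\eqref{eq:polar_rep}; the remainder is a structural rewriting of the earlier proof.
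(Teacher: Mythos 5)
Your overall architecture matches the paper's: the paper also introduces the class $\cM^r$, proves via the polar decomposition~\eqref{eq:polar_rep} that the projected kernel preserves $\cM^r$ (your Step~1 is essentially the paper's final computation bounding $\bbP(x_{n+1}^{\gamma,r}\in A)$ by the pre-projection term plus the radial-projection term), isolates a full-measure set of radii via a Fubini argument in polar coordinates (the paper's Lemma~\ref{lm:good_r}), and then reruns the induction of Proposition~\ref{tout-se-vaut}. So the route is the same.

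There is, however, a genuine gap in your last step. You select the good radii $r$ using only the set $\Delta_f^c$, and then claim that $(x_n^{\gamma,r},\xi_{n+1})\in\Delta_f$ a.s.\ implies $\varphi(x_n^{\gamma,r},\xi_{n+1})=\nabla f(x_n^{\gamma,r},\xi_{n+1})$ for an arbitrary a.e.-gradient $\varphi$. That implication is false: an a.e.-gradient is only required to equal $\nabla f$ on a $\lambda\otimes\mu$-full set, and nothing prevents $\varphi$ from differing from $\nabla f$ on all of $\SSS(r)\times\Xi$ (a $\lambda\otimes\mu$-null set) even at points of differentiability --- e.g.\ $\varphi(x,s)=\nabla f(x,s)+v\,\1_{\norm{x}=r}$ is a legitimate a.e.-gradient. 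Since, after projection, the law of $x_n^{\gamma,r}$ charges $\SSS(r)$ with positive probability, the event $\{\varphi(x_n^{\gamma,r},\xi_{n+1})\neq\varphi_0(x_n^{\gamma,r},\xi_{n+1})\}$ need not be negligible, and your induction breaks. The conclusion for the lazy gradient $\varphi_0$ is fine, because $\varphi_0=\nabla f$ \emph{everywhere} on $\Delta_f$ by construction; it is the passage to a general $\varphi$ that fails. The repair is to apply your polar-coordinate Fubini argument not just to $\Delta_f^c$ but to the $\lambda\otimes\mu$-null set $A^c=\{(x,s):(x,s)\notin\Delta_f\text{ or }\varphi(x,s)\neq\nabla f(x,s)\}$ associated with the a.e.-gradients under comparison (this is exactly the set $A$ used in the induction of Proposition~\ref{tout-se-vaut}, and it is what the paper's Lemma~\ref{lm:good_r} is a template for): one then gets $(\varrho_r\otimes\mu)\bigl(A^c\cap(\SSS(r)\times\Xi)\bigr)=0$ for almost every $r$, with the exceptional null set of radii now depending on the gradients considered, and the induction goes through.
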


By Proposition~\ref{tout-se-vaut-proj} we can focus on the
lazy projected SGD sequence:
\begin{equation}\label{eq:lazy_proj}
  x_{n+1}^{\gamma,r} = \Pi_r(x_{n}^{\gamma,r} - \gamma \varphi_0(x_n^{\gamma,r}, \xi_{n+1})) \, .
\end{equation}
We define its associated kernel
\begin{equation}\label{kernel-proj}
  \Pg^rg(x) = \int g(\Pi_r(x - \gamma \varphi_0(x, s))) \mu(\dif s)\, .
\end{equation}

The next two theorems are analogous to Theorems~\ref{th:rm} and~\ref{th:apt}.

\begin{theorem}\label{th:rm_proj}
  Let Assumptions~\ref{hyp:model} and \ref{hyp:proj_ker} hold. Then for almost every $r> 0$ , $\forall \nu \in \cM^r$, for every $n \in \bbN$ it holds $\bbP^{\nu}$-a.e.
  \begin{enumerate}[i)]
    \item $F$, $f(\cdot, \xi_{n+1})$ and $f(\cdot, s)$ (for $\mu$-a.e. $s$) are differentiable at $x_n^{\gamma,r}$.
    \item $x_{n+1}^{\gamma,r} \in x_n^{\gamma,r} - \gamma \nabla f(x_n^{\gamma,r}, \xi_{n+1}) - \gamma \cN_r( \Pi_r(x_n^{\gamma,r} - \gamma \nabla f(x_n^{\gamma,r}, \xi_{n+1})))$.
  \end{enumerate}
\end{theorem}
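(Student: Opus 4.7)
The proof mirrors that of Theorem~\ref{th:rm}, the essential novelty being that the law of $x_n^{\gamma,r}$ is no longer absolutely continuous with respect to $\lambda^d$: by Proposition~\ref{tout-se-vaut-proj} it only lies in $\cM^r$, i.e. it has an absolutely continuous part together with a singular component supported on the sphere $\SSS(r)$. The plan is first to control, for $\lambda^1$-almost every $r$, the non-differentiability sets on $\SSS(r)$; then to transfer this control along one SGD step using the independence of $\xi_{n+1}$ from $x_n^{\gamma,r}$; and finally to deduce the inclusion in (ii) from the variational characterization of the projection.

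The first step is to fix, once and for all, a full-measure set of good radii. By Lemma~\ref{nabmes}, $(\lambda^d\otimes\mu)(\Delta_f^c)=0$; expanding the left-hand side via the polar decomposition~\eqref{eq:polar_rep} and applying Fubini's theorem yields
\[
\int_0^\infty r^{d-1}\,(\varrho_1\otimes\mu)\bigl(\{(\theta,s):(r\theta,s)\in\Delta_f^c\}\bigr)\,\lambda^1(dr)=0,
\]
hence $(\varrho_r\otimes\mu)(\Delta_f^c\cap(\SSS(r)\times\Xi))=0$ for $\lambda^1$-a.e. $r$. The same argument applied to $\cD_F^c$ (which is $\lambda^d$-negligible since $F$ is locally Lipschitz, by Rademacher) yields $\varrho_r(\cD_F^c\cap\SSS(r))=0$ for $\lambda^1$-a.e. $r$. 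Intersect these two full-measure sets and henceforth restrict $r$ to lie in the resulting set; also fix $\nu\in\cM^r$.

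By Proposition~\ref{tout-se-vaut-proj}, the law $\tau_n$ of $x_n^{\gamma,r}$ under $\bbP^\nu$ lies in $\cM^r$ and decomposes as $\tau_n=\tau_n^{(1)}+\tau_n^{(2)}$ with $\tau_n^{(1)}\ll\lambda^d$ and $\tau_n^{(2)}\ll\varrho_r$. Since $\xi_{n+1}$ is independent of $x_n^{\gamma,r}$ with law $\mu$, the joint law of $(x_n^{\gamma,r},\xi_{n+1})$ is absolutely continuous with respect to $(\lambda^d\otimes\mu)+(\varrho_r\otimes\mu)$. The preparatory step forces this joint law to assign zero mass to $\Delta_f^c$, and therefore $f(\cdot,\xi_{n+1})$ is differentiable at $x_n^{\gamma,r}$, $\bbP^\nu$-almost surely. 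A fiberwise Fubini argument applied to $\tau_n\otimes\mu$ likewise yields the $\bbP^\nu$-a.s. differentiability of $f(\cdot,s)$ at $x_n^{\gamma,r}$ for $\mu$-a.e. $s$, and the analogous reasoning applied to $\cD_F^c$ gives the differentiability of $F$, proving (i).

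For (ii), Proposition~\ref{tout-se-vaut-proj} allows one to restrict attention to the lazy projected sequence~\eqref{eq:lazy_proj}, and (i) implies $\varphi_0(x_n^{\gamma,r},\xi_{n+1})=\nabla f(x_n^{\gamma,r},\xi_{n+1})$ $\bbP^\nu$-a.s. The variational characterization of the Euclidean projection onto the closed convex set $\cl(B(0,r))$ gives $y-\Pi_r(y)\in\cN_r(\Pi_r(y))$ for every $y\in\RR^d$; applying this with $y=x_n^{\gamma,r}-\gamma\nabla f(x_n^{\gamma,r},\xi_{n+1})$ and noting that $\cN_r$ is a cone (so $\gamma\cN_r=\cN_r$) yields the claimed inclusion. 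The main obstacle is conceptual rather than computational: one cannot invoke Theorem~\ref{th:rm} directly because $\tau_n$ generically carries a nontrivial singular mass on $\SSS(r)$, and controlling the non-differentiability sets on this lower-dimensional stratum is exactly what forces the restriction to almost every $r>0$ and motivates the polar-coordinate argument.
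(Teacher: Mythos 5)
Your proof is correct and follows essentially the same route as the paper: the polar-coordinate Fubini argument you use to fix a full-measure set of good radii is exactly the content of the paper's Lemma~\ref{lm:good_r}, and the rest (controlling the law of the iterate via Proposition~\ref{tout-se-vaut-proj} and Assumption~\ref{hyp:proj_ker}, then invoking $y-\Pi_r(y)\in\cN_r(\Pi_r(y))$ together with the cone property for point (ii)) matches the paper's argument. The only cosmetic difference is that you phrase the measure-theoretic step through the decomposition $\tau_n=\tau_n^{(1)}+\tau_n^{(2)}$ with $\tau_n^{(2)}\ll\varrho_r$, whereas the paper pushes the absolutely continuous pre-projection law forward through $\Pi_r$; these are two descriptions of the same computation.
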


\begin{theorem}
\label{th:apt_proj}
Let Assumptions~\ref{hyp:model}--\ref{hyp:model-reinf} and \ref{hyp:proj_ker} hold true.  Denote $\sx^{\gamma,r}$ the piecewise affine interpolated process:

$$
\sx^{\gamma,r}(t) = x_n^{\gamma,r}+(t/\gamma-n)(x_{n+1}^{\gamma,r}
  -x_n^{\gamma,r})
\qquad \left(\forall t\in [n\gamma, (n+1)\gamma)\right).
$$
Then, for almost every $r>0$, for every compact set  $\mK\subset \cl({B}(0,r))$,
  \[
  \forall \varepsilon>0,\ \lim_{\gamma \rightarrow 0}\left(\sup_{\nu \in \cM^r(\mK)}\, \PP^{\nu}\left(\bs
      d_C(\sx^{\gamma,r},\cS_{- \partial F - \cN_r}(\mK))>\varepsilon\right)
  \right) = 0\, .
  \]
Moreover, for any $\gamma_0 > 0$, the family of distributions $\{ \PP^{\nu}(\sx^{\gamma,r})^{-1}:\nu \in \cM^r(\mK),0<\gamma <\gamma_0\}$ is
  tight.
\end{theorem}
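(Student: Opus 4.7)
The plan is to adapt the argument used for Theorem~\ref{th:apt} to the projected setting, using Theorem~\ref{th:rm_proj} as the analog of Theorem~\ref{th:rm}. By Proposition~\ref{tout-se-vaut-proj}, for almost every $r>0$ and every $\nu\in\cM^r(\mK)$, we may without loss of generality work with the lazy projected SGD sequence~\eqref{eq:lazy_proj}. Theorem~\ref{th:rm_proj}(ii) then expresses the recursion $\PP^\nu$-almost surely in the Robbins--Monro form
$$
x_{n+1}^{\gamma,r} = x_n^{\gamma,r} - \gamma\nabla F(x_n^{\gamma,r}) + \gamma\eta_{n+1} - \gamma u_{n+1},
$$
where $\eta_{n+1} := \nabla F(x_n^{\gamma,r}) - \nabla f(x_n^{\gamma,r},\xi_{n+1})$ is a $(\overline{\mcF_n})$-martingale difference, and $u_{n+1}\in\cN_r(x_{n+1}^{\gamma,r})$ is zero whenever $x_{n+1}^{\gamma,r}\in\inter\cl({B}(0,r))$ and otherwise a nonnegative multiple of $x_{n+1}^{\gamma,r}/r$ whose norm is controlled by $\|\nabla f(x_n^{\gamma,r},\xi_{n+1})\|$.

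Tightness of $\{\PP^\nu(\sx^{\gamma,r})^{-1}\}$ is immediate since the iterates live in the compact set $\cl({B}(0,r))$, so each trajectory is uniformly bounded; equicontinuity on compact intervals follows from the fact that between two indices $m,n$ with $|n-m|\gamma\le T$, the accumulated drift $\gamma\sum\nabla F(x_i^{\gamma,r})$ and accumulated normal cone term are both of order $T$ (using Assumption~\ref{hyp:model-reinf}(i) and the explicit bound on $u_{n+1}$), while the martingale noise contribution $\gamma\sum_{i=m}^{n-1}\eta_{i+1}$ can be controlled by Doob's maximal inequality together with Assumption~\ref{hyp:model-reinf}(ii) to give fluctuations of order $\sqrt{\gamma T}$ tending to zero with $\gamma$.

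For the convergence claim, we argue by contradiction: if the conclusion fails, we can extract a sequence $\gamma_k\downarrow 0$ and initial laws $\nu_k\in\cM^r(\mK)$ such that $\PP^{\nu_k}(\bs d_C(\sx^{\gamma_k,r},\cS_{-\partial F-\cN_r}(\mK))>\varepsilon)$ is bounded below. By tightness and Skorokhod's representation, pass to a common probability space on which $\sx^{\gamma_k,r}$ converges almost surely in $\bs d_C$ to some limit $\sx$ starting in $\mK$. It then suffices to show that $\sx$ is almost surely a solution of the projected DI~\eqref{eq:projDI}. Using the martingale noise control above, the cumulative noise $\gamma_k\sum\eta_{i+1}$ vanishes uniformly on compacts in probability. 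Since $-\partial F$ is upper semicontinuous with nonempty compact convex values (Section~\ref{subsec:prel}) and $\cN_r$ is likewise upper semicontinuous with closed convex values, one concludes by an Aubin--Cellina-type closed graph argument (as in \cite{aub-cel-(livre)84}) that the limit of the piecewise-affine trajectories with increments in $-(\partial F + \cN_r)$ modulo vanishing noise must satisfy~\eqref{eq:projDI}, contradicting the assumption.

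The main obstacle lies in handling the reflected dynamics, i.e. in showing that the accumulated normal cone contribution $\gamma_k\sum_{i}u_{i+1}$ converges, along the subsequence, to an absolutely continuous process whose derivative lies in $\cN_r(\sx(t))$ for almost every $t$. Because $u_{i+1}$ is supported on the indices where $x_{i+1}^{\gamma_k,r}\in\SSS(r)$ and is collinear with $x_{i+1}^{\gamma_k,r}$, the upper semicontinuity of $\cN_r$ combined with the almost sure convergence of the trajectories yields this structural property in the limit. Uniformity over $\nu\in\cM^r(\mK)$ throughout follows because every estimate depends on $\nu$ only through the compact support $\mK$ of its marginal (or, for the singular component on $\SSS(r)\cap\mK$, through the same compact set), so no refinement is needed.
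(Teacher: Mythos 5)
Your decomposition of the projected iteration is exactly the one the paper uses: writing $x_{n+1}^{\gamma,r}=x_n^{\gamma,r}-\gamma\nabla f(x_n^{\gamma,r},\xi_{n+1})-\gamma u_{n+1}$ with $u_{n+1}=\frac1\gamma\bigl(y-\Pi_r(y)\bigr)\in\cN_r(x_{n+1}^{\gamma,r})$ for $y=x_n^{\gamma,r}-\gamma\nabla f(x_n^{\gamma,r},\xi_{n+1})$ is precisely the paper's choice of $h(s,x)=-\nabla F(x)-\frac1\gamma\bigl(x-\gamma\nabla f(x,s)-\Pi_r(x-\gamma\nabla f(x,s))\bigr)$ plus the martingale difference $\eta_{n+1}$. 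The difference is what happens next: the paper stops there and invokes \cite[Theorem 5.1]{bia-hac-sal-stochastics19} as a black box (with $H(x)=-\partial F(x)-\cN_r(x)$), only verifying its ``Assumption RM''; you instead reprove that machinery from scratch --- tightness via boundedness in $\cl(B(0,r))$ plus drift/martingale estimates, then Skorokhod representation and a closed-graph limit identification. Your route is more self-contained and makes visible where each hypothesis enters, at the cost of redoing a substantial argument; the paper's route is two lines but hides the real work, in particular the fact that the cited theorem tolerates $h(s,x)$ lying in $-\nabla F(x)-\cN_r(\Pi_r(x-\gamma\nabla f(x,s)))$ rather than in $H(x)$ itself.

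One step in your sketch is under-justified and is in fact the crux: identifying the limit of the accumulated reflection term $\gamma\sum_i u_{i+1}$. Upper semicontinuity of $\cN_r$ alone does not suffice, because $\cN_r$ has unbounded (non-compact) values, so the combined map $-\partial F-\cN_r$ fails the linear-growth/compact-values hypotheses under which the Aubin--Cellina closedness theorem is stated in Section~\ref{subsec:dif_incl}. What saves the argument is the quantitative bound $\|u_{n+1}\|\le\|\nabla f(x_n^{\gamma,r},\xi_{n+1})\|$ you state: combined with Assumption~\ref{hyp:model-reinf}(ii) it gives uniform integrability of the piecewise-constant velocity processes on compact time intervals, from which one extracts a weakly convergent subsequence in $L^1_{\mathrm{loc}}$ and then applies Mazur's lemma together with convexity and graph-closedness of $\partial F$ and $\cN_r$ to conclude that the a.e.\ derivative of the limit lies in $-\partial F(\sx(t))-\cN_r(\sx(t))$. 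You should also note that the evaluation point of the normal cone is $x_{n+1}^{\gamma,r}$ rather than $x_n^{\gamma,r}$, which is harmless only because the one-step displacement is small in probability; and that all of this presupposes $r$ is chosen in the full-measure set of Lemma~\ref{lm:good_r} so that Theorem~\ref{th:rm_proj} applies. With these points filled in, your argument is a valid, more elementary substitute for the citation.
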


We compare Theorems~\ref{th:rm} and~\ref{th:apt}. First, because of the projection step (and with the help of
Assumption~\ref{hyp:proj_ker}), the law of the $n$-th iterate is no longer in
$\cM_{abs}$, but in $\cM^r$. Second, the continuous counterpart of
Equation~\eqref{eq:sgd_proj} is now the differential inclusion~\eqref{eq:projDI}
%
Note that, if the solutions of the DI~\eqref{eq:DI} that start from $\cK$ all lie
in $\cl({B}(0,r))$, then the set of these solutions coincides with the set
of solutions of the DI~\eqref{eq:projDI} that start from $\cK$.

The analysis of the convergence of the iterates in the "long run"  is greatly simplified by the introduction of the projection step. Compared to Assumption~\ref{markov}, we only assume the existence of a small set for $\Pg^r$, the drift condition of the form \ref{markov}-\eqref{drift}--\eqref{tension} is then automatically satisfied, thanks to the projection step (see Section~\ref{sec:proof_main}).
\begin{assumption}\label{hyp:smallset_proj}
  There is $R >0$ and $\gamma_0>0$ such that for every $\gamma \in (0, \gamma_0]$ there is $\rho_\gamma$ such that Assumption~\ref{markov}-\eqref{smallset} hold for $(R, \rho_\gamma)$ (note that $R$ is independent of $\gamma$ here).
\end{assumption}
As shown in Section~\ref{subsec-suffis}, Assumption~\ref{hyp:smallset_proj} holds upon adding to $\varphi_0$ a small random perturbation.


 \begin{theorem}\label{th:main_proj}
   Let Assumptions~\ref{hyp:model}-\ref{hyp:model-reinf} and \ref{hyp:chrule}--\ref{hyp:smallset_proj} hold. Let $\{(x_n^{\gamma,r})_{n\in \NN^*}:\gamma\in (0,\gamma_0]\}$
   be a collection of projected SGD sequences of step-size $\gamma$.
   Then, for almost every $ 0 < r  \leq R$, the set $\cZ_r = \{ x \colon 0 \in \partial F(x) + \cN_r(x) \}$ is nonempty and for all $\nu\in \cM^r$ and all $\varepsilon>0$,
   \begin{equation}
   \label{cvg-proj}
   \limsup_{n\to\infty}
    \PP^\nu\left( \bs d( x_n^{\gamma,r}, \cZ_r) > \varepsilon \right)
    \xrightarrow[{\gamma\to 0}]{} 0 .
   \end{equation}
 \end{theorem}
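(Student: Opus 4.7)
The plan is to adapt the strategy used for Theorem~\ref{th:main} to the projected setting, exploiting the key simplification that the iterates already live in the compact set $\cl(B(0,r))$. The outline has four steps: (i) ergodicity of the kernel $P_\gamma^r$ and tightness of its invariant measures $\pi_\gamma^r$, (ii) identification of accumulation points of $\{\pi_\gamma^r\}$ as $\gamma\downarrow 0$ with invariant measures of the set-valued flow generated by~\eqref{eq:projDI}, (iii) localisation of these invariant measures on the Birkhoff centre of the flow via a set-valued Poincar\'e recurrence theorem, and (iv) identification of this Birkhoff centre with $\cZ_r$ by means of the chain rule and a Lyapunov argument for $F$.

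For step~(i), the restriction $r\leq R$ ensures that Assumption~\ref{hyp:smallset_proj} provides a minorisation $P_\gamma^r(x,A)\geq \rho_\gamma(A)$ valid for \emph{every} $x$ in the effective state space $\cl(B(0,r))$. This is a Doeblin condition, from which standard Markov chain theory yields existence of a unique invariant probability $\pi_\gamma^r$, geometric ergodicity in total variation for each fixed $\gamma$, and automatic tightness of $\{\pi_\gamma^r:\gamma\in(0,\gamma_0]\}$ since all measures are supported on the same compact ball. Proposition~\ref{tout-se-vaut-proj} further allows us to restrict attention to the lazy kernel~\eqref{kernel-proj}, provided $r$ lies in the full-measure set delivered by that proposition.

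For steps~(ii) and~(iii), I would replay the Has'minski\u{\i}-type argument sketched in Section~\ref{sec:longrun}. Fix an accumulation point $\pi^\star$ of $\{\pi_\gamma^r\}$ along some sequence $\gamma_k\downarrow 0$. Combining the stationarity identity $\pi_{\gamma_k}^r P_{\gamma_k}^r=\pi_{\gamma_k}^r$ with the weak asymptotic pseudotrajectory property of Theorem~\ref{th:apt_proj} shows, exactly as in~\cite{rot-san-13}, that $\pi^\star$ is invariant for the set-valued flow induced by $\dot\sx\in -\partial F(\sx)-\cN_r(\sx)$. The set-valued Poincar\'e recurrence theorem~\cite{aub-fra-las-91,fau-rot-13} then forces $\pi^\star$ to be supported on the Birkhoff centre of this flow.

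The main obstacle is step~(iv): showing that the Birkhoff centre is contained in $\cZ_r$. Given any absolutely continuous solution $z$ of~\eqref{eq:projDI}, write $\dot z(t)=-w(t)-v(t)$ with $w(t)\in \partial F(z(t))$ and $v(t)\in \cN_r(z(t))$. Assumption~\ref{hyp:chrule} then gives, for almost every $t$,
\[
(F\circ z)'(t)=\langle w(t),\dot z(t)\rangle=-\|w(t)\|^2-\langle w(t),v(t)\rangle.
\]
When $\|z(t)\|<r$ one has $v(t)=0$ and the derivative equals $-\|w\|^2\leq 0$. On the boundary, writing $v(t)=\lambda(t)z(t)/r$ with $\lambda(t)\geq 0$ and differentiating $\|z(t)\|^2$ on intervals where $\|z\|=r$ yields $\lambda=-\langle z,w\rangle/r$, hence $(F\circ z)'(t)=-\|w\|^2+\lambda^2\leq 0$ by Cauchy--Schwarz, with equality only if $w$ and $z$ are collinear with compatible signs, which forces $w+v=0$, i.e.\ $z(t)\in\cZ_r$. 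Thus $F$ is a strict Lyapunov function off $\cZ_r$, any recurrent trajectory lies in $\cZ_r$, and the Birkhoff centre coincides with $\cZ_r$. Nonemptiness of $\cZ_r$ follows from the first-order optimality condition applied to a minimiser of the continuous function $F$ on the compact convex set $\cl(B(0,r))$. Portmanteau then gives $\pi_\gamma^r(\{x:\bs d(x,\cZ_r)>\varepsilon\})\to 0$ as $\gamma\to 0$, and the total-variation ergodicity from step~(i) converts this statement on stationary distributions into~\eqref{cvg-proj}.
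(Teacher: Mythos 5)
Your proof is correct and its overall skeleton (ergodicity of $\Pg^r$, tightness, Has'minski\u{\i}-type identification of accumulation points of $\{\pi_\gamma^r\}$ as flow-invariant measures via Theorem~\ref{th:apt_proj}, Poincar\'e recurrence, identification of the Birkhoff centre) is the same as the paper's, which runs Theorems~\ref{th:main} and~\ref{th:main_proj} through a common pipeline in Section~\ref{sec:proof_main}. The genuine difference is in the last step. The paper identifies the Birkhoff centre of $\dot\sx\in-\partial F(\sx)-\cN_r(\sx)$ with $\cZ_r$ by invoking \cite[Lemma 6.3]{dav-dru-kak-lee-19}, which under the chain rule yields $\|\dot\sx(t)\|=\min\{\|v\|:v\in\partial F(\sx(t))+\cN_r(\sx(t))\}$ and the exact descent identity $F(\sx(t))-F(\sx(0))=-\int_0^t\|\dot\sx\|^2$; nonemptiness of $\cZ_r$ then falls out of $\text{BC}=\cZ_r$. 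You instead give a self-contained Lyapunov computation exploiting the explicit form of $\cN_r$ on the ball: decomposing $\dot z=-w-v$, using $\langle z,\dot z\rangle=0$ a.e.\ on the boundary to solve for the multiplier $\lambda$, and concluding $(F\circ z)'=-\|w\|^2+\lambda^2\le 0$ with equality forcing $w+v=0$ by the Cauchy--Schwarz equality case; and you obtain nonemptiness of $\cZ_r$ directly from first-order optimality of a minimiser of $F$ on the compact convex ball. This buys independence from the external lemma at the cost of an argument specific to the Euclidean ball (the paper's route works for a general closed convex $\cK$). Two small points to tighten: the set $\{t:\|z(t)\|=r\}$ need not be a union of intervals, so you should justify $\tfrac{d}{dt}\|z(t)\|^2=0$ a.e.\ on that set by the standard fact that an absolutely continuous function has vanishing derivative almost everywhere on any of its level sets; and to pass from ``recurrent points lie in $\cZ_r$'' to $\text{BC}_{\cS_{-\partial F-\cN_r}}\subset\cZ_r$ you need the (easily checked) closedness of $\cZ_r$, which follows from the graph-closedness of $\partial F+\cN_r$ together with local boundedness of $\partial F$. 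Neither is a gap, only a missing justification at the level of detail the paper itself provides for the unprojected case.
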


Theorem~\ref{th:main_proj} is analogous to Theorem~\ref{th:main}. Notice that, since $\cM_{abs} \subset \cM^r$, $x_0$ can still be initialized under a Lebesgue-absolutely continuous measure. On the other hand, as explained in the beginning of this section, due to the projection step, the iterates, instead of converging to $\cZ$, are now converging to the set of Karush-Kuhn-Tucker points related to the DI~\eqref{eq:projDI}.

\section{Proofs}
\label{sec:proofs}

\subsection{Proof of Lemma~\ref{nabmes}}
\label{sec:proofnabmes}

By definition, $(x,s)\in \Delta_f$ means that there exists $d_x \in \bbR^d$ (the gradient) s.t.
$f(x + h,s) = f(x,s) + \langle d_x, h \rangle + o(\norm{h})$. That is to
say $(x,s)$ belongs to the set:
\begin{equation}\label{eq:dif_condition}
  \bigcap_{\varepsilon \in \bbQ} \bigcup_{\delta \in \bbQ}\bigcap_{ 0< \norm{h}\leq \delta}\left\{ (y,s): \left|\frac{f(y+h, s) - f(y, s) - \langle d_x, h \rangle}{\norm{h}}\right| < \varepsilon \right \}\,.
\end{equation}
In addition, using that  $f( \cdot, s)$ is continuous, the above set is unchanged if the inner intersection
over $0 < \norm{h} \leq \delta$ is replaced by an intersection over the $h$ s.t. $0<\norm{h}\leq \delta$
and having \emph{rational} coordinates \emph{i.e.}, $h\in \bbQ^d$.
Define:
\begin{equation}
  \Delta_f' :=  \bigcap_{\varepsilon' \in \bbQ} \bigcup_{d \in \bbQ^d}\bigcap_{\varepsilon \in \bbQ} \bigcup_{\delta \in \bbQ}\bigcap_{ \underset{ h \in \bbQ^d }{0<\norm{h} \leq \delta} } \left\{ (x, s ) : \left|\frac{f(x+h, s) - f(x, s) - \langle d, h \rangle}{\norm{h}}\right| < \varepsilon + \varepsilon' \right \}
\end{equation}
By construction, $\Delta_f'$ is a measurable set. We prove that $\Delta_f'=\Delta_f$.
Consider $(x,s ) \in \Delta_f$ and let $d_x$ be the gradient of $f(\cdot, s)$ at $x$.
By  \eqref{eq:dif_condition} for all $\varepsilon \in \bbQ$, there is a $\delta \in \bbQ$ such that:
\begin{equation*}
  (x,s ) \in \bigcap_{h \leq \delta, h \in \bbQ^d}\left\{\left|\frac{f(x+h, s) - f(x, s) - \langle d_x, h \rangle}{h}\right| < \varepsilon \right \}
\end{equation*}
For any $\varepsilon' > 0$, choose $d' \in \bbQ^d$ such that $\norm{d' - d_x} \leq \varepsilon'$. Using the previous inclusion,
for all $\varepsilon$, there exists therefore $\delta\in \bbQ$ s.t.
\begin{equation*}
    (x,s ) \in \bigcap_{h \leq \delta, h \in \bbQ^d}\left\{\left|\frac{f(x+h, s) - f(x, s) - \langle d_q, h \rangle}{h}\right| < \varepsilon + \varepsilon' \right \}
\end{equation*}
which means $\Delta_f \subset \Delta'_f$. To show the converse, consider $(x, s) \in \Delta'_f$.
Let $(\varepsilon'_k)$ be a positive sequence of rationals converging to zero. By definition, for every $k$,
there exists $d_k\in \bbQ^d$ s.t. for all $\varepsilon$, there exists $\delta_k(\varepsilon)$, s.t. for all (rational) $h\leq \delta_k(\varepsilon)$,
\begin{equation}
\label{eq:difdk}
\left|\frac{f(x+h, s) - f(x, s) - \langle d_k, h \rangle}{h}\right| < \varepsilon + \varepsilon_k'\,.
\end{equation}
Moreover, one may choose $\delta_k(\varepsilon)\leq \delta_0(\varepsilon)$.
Inspecting first the inequality~(\ref{eq:difdk}) for $k=0$, we easily obtain that the quantity $\frac{f(x+h, s) - f(x, s)}{h}$ is bounded
uniformly in  $h$ s.t. $0<\|h\|\leq \delta_0(\varepsilon)$. Using this observation and again Equation~(\ref{eq:difdk}),
this in turn implies that $({d_k})$ is a  bounded sequence. There exists $d\in \bbR^d$ and s.t. $d_k \rightarrow d$ along some
extracted subsequence.
Now consider $\varepsilon>0$ and choose $k$ such that $\norm{d_k - d} < \frac{\varepsilon}{2}$ and $\varepsilon'_k<\frac\varepsilon 2$.
For all $h \leq \delta_k(\varepsilon/2)$,
\begin{equation*}
  \left|\frac{f(x+h, s) - f(x, s) - \langle d, h \rangle}{h}\right| \leq \left|\frac{f(x+h, s) - f(x, s) - \langle d_k, h \rangle}{h}\right| + \norm{d - d_k} < \varepsilon
\end{equation*}
This means that $d$ is the gradient of $f(\cdot, s)$ at $x$, hence $\Delta'_f \subset \Delta_f$.
Hence, the first point of the Lemma~\ref{nabmes} is proved.

Denoting as $e_i$ the $i^{\text{th}}$ canonical vector of $\RR^d$, the
$i^{\text{th}}$-component $[\varphi_0]_i$ in $\RR^d$ of the function
$\varphi_0$ is given as
\[
[\varphi_0(x,s)]_i = \lim_{t\to 0}
 \frac{f(x+te_i, s) - f(x,s)}{t} \1_{\Delta_f}(x,s),
\]
and the measurability of $\varphi_0$ follows from the measurability of $f$ and
the measurability of $\1_{\Delta_f}$.

Finally, assume that $f(\cdot,s)$ is locally Lipschitz continuous for every
$s\in \Xi$.  From Rademacher's theorem \cite[Ch.~3]{cla-led-ste-wol-livre98},
$f(\cdot, s)$ is almost everywhere differentiable, which reads $\int (1 -
\1_{\Delta_f}(x,s)) \lambda(dx)=0$.  Using Fubini's theorem, $\int_{\RR^d
\times\Xi} (1 - \1_{\Delta_f}(x,s)) \ \lambda(dx) \otimes \mu(ds) = 0$, and the
last point is proved.

\subsection{Proof of Proposition~\ref{prop:C2}}\label{sec:proofC2}

The idea of the proof is to show that for almost every $\gamma$ and $s$ we have that $g_{s, \gamma}(x) := (x - \gamma \nabla f(x, s)) \1_{\Delta_f}(x,s)$ is almost everywhere a local diffeomorphism.

In order to prove that we define for each $(x,s) \in \bbR^d \times \Xi$ the pseudo-hessian  $\cH(x,s) \in \bbR^{d \times d}$ as
\begin{equation*}
 \cH(x,s)_{i, j} = \limsup_{t \rightarrow 0} \frac{\scalarp{\nabla f(x + t e_j, s)\1_{\Delta_f}(x+ te_j,s ) - \nabla f(x,s)}{e_i}}{t} \1_{\Delta_f}(x,s)  \, .
\end{equation*}

Since it is a limit of measurable functions, $\cH$ is $\cB(\bbR^d) \otimes \mcT$ measurable, and if $f(\cdot, s)$ is two times differentiable at $x$ then $\cH(x,s)$ is just the ordinary hessian. Now we define
$l(x, s, \gamma) = \det (\gamma \cH(x,s) - \Id)$ if every entry in $\cH(x,s)$ is finite, and $l(x, s, \gamma) = 1$ otherwise, it is a $\cB(\bbR^d) \otimes \mcT \otimes \cB(\bbR_{+})$ measurable function (as a sum of two measurable functions). By the inverse function theorem we have that if $f(\cdot, s)$ is $C^2$ at $x$ and if $\det (\gamma \cH(x,s) - \Id) \neq 0$, then $g_{s, \gamma}(\cdot)$ is a local diffeomorphism at $x$. Therefore $l(x, s, \gamma) \neq 0$ implies either the latter or $f(\cdot, s)$ is not $C^2$ at $x$ (or both).\\
Let $\lambda^d, \lambda^1$ denote Lebesgue measures respectively on $\bbR^d$ and $\bbR_{+}$, we  have by Fubini's theorem:
\begin{equation*}
\begin{split}
    \int \1_{l(x, s, \gamma) = 0}  \lambda^d(\dif x) \otimes \mu(\dif s) \otimes \lambda^1(\dif \gamma) &= \int \lambda^d\otimes \mu(\{ (x, s): l(x,s, \gamma) = 0\}) \lambda^1(\dif \gamma)\\
    &=\int \int \int \1_{l(x, s, \gamma) = 0}  \lambda^1(\dif \gamma) \lambda^d(\dif x) \mu(\dif s)\\
    &= 0 \, ,
\end{split}
\end{equation*}
where the last equality comes from the fact that for $(x, s)$ fixed $l(x, s, \gamma) = 0$ only if $1/ \gamma$ is in the spectrum of $\cH(x, s)$ which is finite. Therefore we have a $\Gamma$ a set of full measure in $\bbR_{+}$  such that for $\gamma \in \Gamma$ we have  $\lambda^d \otimes \mu (\{ (x, s): l(x,s,\gamma) = 0\}) = 0$. Once again applying Fubini's theorem we get that for almost every $s \in \Xi$ we have $\{ x:  g_{s, \gamma}(\cdot) \text{ is a local diffeomorphism at } x\})$
is of $\lambda^d$-full measure (since for each $s$, $f(\cdot, x)$ is almost everywhere $C^2$).
Finally, for $A\subset \RR^d$, $\gamma \in \Gamma$ and $\nu \in \cM_{abs}(\bbR^d)$, we have
\begin{equation*}
    \nu \Pg(A) = \nu \otimes \mu(\{ (x, s) : g_{s, \gamma}(x) \in A\})  \leq \lambda^d \otimes \mu(\{ (x, s) : g_{s, \gamma}(x) \in A\})\, ,
\end{equation*}
and by Fubini's theorem,
\begin{equation*}
\begin{split}
  \lambda^d \otimes \mu(\{ (x, s) : g_{s, \gamma}(x) \in A\})
  &= \int \lambda^d(\{ x: g_{s, \gamma}(x) \in A\}) \mu(\dif s) \\
    &= \int \lambda^d(\{ x: g_{s, \gamma}(x) \in A \text { and } f(\cdot, s) \text{ is } C^2 \text{ at } x\}) \mu(\dif s) \\
    &= \int \lambda^d(\{ x: g_{s, \gamma}(x) \in A \text{ and } g_{s, \gamma}(\cdot) \text{ is a local diffeomorphism at } x\}) \mu(\dif s) \, .
\end{split}
\end{equation*}
Now by separability of $\bbR^d$ there is a countable family of open neighborhoods $(V_i)_{i \in \bbN}$ such that for any open set $O$ we have $O = \bigcup_{j \in J}V_j$. The set of $x$ where $g(\cdot, s, \gamma)$ is a local diffeomorphism is an open set, hence
\begin{equation*}
  \{ x: g_{s, \gamma}(x) \in A \text{ and } g_{s, \gamma}(\cdot) \text{ is a local diffeomorphism at } x\} = \bigcup_{i \in I} V_i \cap \{ x: g_{s, \gamma}(x) \in A \} \, .
\end{equation*}
Since an image of a null set by a diffeomorphism is a null set we have
\begin{equation*}
  \lambda^d(\{ x: g_{s, \gamma}(x) \in A \} \cap V_i) = 0 \, .
\end{equation*}
Hence, $\nu \Pg(A) = 0$, which proves our claim.

\subsection{Proof of Theorem~\ref{th:rm}}
\label{sec:proofrm}

Take $\nu \ll \lambda$ and a SGD sequence $(x_n)_{n \in \bbN}$, let $S_1 \subset \RR^d$  be the set of $x$ for which  $\nabla f(x,s)$ exists
for $\mu$- almost every $s$, \emph{i.e.},
\[
S_1 \eqdef \left\{ x \in \RR^d \, : \,
\int_{\Xi} (1 - \1_{\Delta_f}(x,s)) \
    \mu(ds) = 0 \right\} .
\]
When Assumption ~\ref{hyp:model} holds, Rademacher's theorem,
lemma~\ref{nabmes} and Fubini's theorem imply that $S_1 \in
\mcB(\RR^d)$ and $\lambda(\RR^d \setminus S_1) = 0$. Hence, for $\mu$-a.e. $s$ we have $f(\cdot, s)$ differentiable at $x_0$, and since $\xi_1 \sim \mu$, $f(\cdot, \xi_1)$ is differentiable at $x_0$. Now by Rademacher's
theorem again, the set $S_2 \subset \RR^d$ where $F$ is differentiable
satisfies $\lambda(\RR^d \setminus S_2) = 0$, therefore $F$ is differentiable at $x_0$.  Moreover, with probability one $x_0$ is in $S_1 \cap S_2$.  Define $A(x) \eqdef \{s\in \Xi: (x,s)\notin \Delta_f\}$.
By Assumption~\ref{hyp:model}, $\| \nabla f(x,\cdot) \|$ is $\mu$-integrable.
Moreover, for all $x \in S_1 \cap S_2$ and all $v \in \RR^d$
\begin{align*}
\ps{\int \nabla f(x,s)\1_{\Delta_f}(x,s) \, \mu(ds), v}
 &= \int_{\Xi\setminus A(x)} \ps{\nabla f(x,s), v} \, \mu(ds) \\
 &= \int_{\Xi\setminus A(x)} \lim_{t\in \RR^* \to 0}
  \frac{f(x + tv, s) - f(x,s)}{t} \, \mu(ds) \\
 &= \lim_{t\in \RR^* \to 0} \int_{\Xi}
  \frac{f(x + tv, s) - f(x,s)}{t} \, \mu(ds)\\
 &= \lim_{t\in \RR^* \to 0} \frac{F(x + tv) - F(x)}{t} \ = \ps{\nabla F(x), v}
\end{align*}
where the interchange between the limit and the integral follows from
Assumption~\ref{hyp:model} and the dominated convergence theorem. Hence,
$\nabla F(x) = \int \nabla f(x,s)\1_{\Delta_f}(x,s) \, \mu(ds)$ for all $x\in S_1 \cap S_2$.
Now denote by $\nu_n$ the law of $x_n$. Since we assumed that $\nu_0 \ll \lambda$,
it holds that $\PP^{\nu}(x_0 \in S_1 \cap S_2 ) = 1$. Therefore, with probability one,
$$x_1 = x_1\1_{S_1\cap S_2}(x_0) = (x_0 - \gamma\nabla f(x_0,\xi_1))\1_{S_1\cap S_2}(x_0) = x_0 - \gamma \nabla f(x_0, \xi_1) \, .$$
Thus, $x_1$ is integrable whenever $x_0$ is integrable, and
$\EE_0(x_1) = x_0-\gamma \nabla F(x_0)$.
Since by Assumption $\nu_1 \ll \lambda$ we can iterate our argument for $x_2$ and then for all $x_n$
and the conclusions of Theorem~\ref{th:rm} follow.

\subsection{Proof of Theorem~\ref{th:apt}}
\label{sec:proofapt}

We want to apply \cite[Theorem 5.1.]{bia-hac-sal-stochastics19}, and therefore verify its assumptions \cite[Assumption RM]{bia-hac-sal-stochastics19}. In order to fall in its setting we first need to rewrite our kernel in a more appropriate way.
As $\partial F$ takes nonempty compact values, it admits a measurable selection $\varphi(x) \in \partial F(x)$~\cite[Lemma 18.2 and Corollary 18.15]{ali_bor06}. Take $\gamma \in \Gamma$, a SGD sequence $(x^{\gamma}_n)$
and notice that by Theorem~\ref{th:rm} it is $\bbP^{\nu}$ almost surely always in $\cD_F \cap S_1$, where $S_1$ is the set of $x$ where $\nabla f(x,s)$ exists for $\mu$-a.e. $s$. Therefore its Markov kernel can be equivalently defined as:
\begin{equation*}
  \Pg'(x, g) \eqdef \1_{\cD_F \cap S_1}(x) \Pg(x,g) + \1_{(\cD_F \cap S_1)^c}(x)  g(x - \gamma \varphi(x)) \, .
\end{equation*}
Now we can apply \cite[Theorem 5.1.]{bia-hac-sal-stochastics19} with  $h_{\gamma}(s,x) =- (\1_{\cD_F \cap S_1}(x)  \nabla F(x) + \1_{(\cD_F \cap S_1)^c}(x)\varphi(x))$ (note that it is independent of $s$) and we have $h(x,s) \in H(x,s) = H(x) \eqdef -\partial F(x)$. As we show next, \cite[Assumption RM]{bia-hac-sal-stochastics19} now easily follows.\\
   First, it is immediate from the general properties of the Clarke subdifferential that the set-valued map
  $-\partial F$ is proper and uppersemicontinuous with convex and compact values, hence the assumption (iii) of \cite[Assumption RM]{bia-hac-sal-stochastics19}. Assumption (ii) is immediate by the uppersemicontinuity of $-\partial F$.
  Moreover, we obtain from Assumption~\ref{hyp:model-reinf} that there exists a
  constant $K \geq 0$ such that
  \[
  \|\partial F(x) \| \leq K ( 1 + \| x \| ) .
  \]
  Thus, $\cS_{-\partial F}$ is defined on the whole $\RR^d$, and
  $\cS_{-\partial F}$ is closed in $(C(\RR_+, \RR^d), \bs d)$ (see \cite{aub-cel-(livre)84}), hence assumption (v). Finally, assumption (vi) comes from Assumption~\ref{hyp:model-reinf}.

We remark that although, \cite[Theorem 5.1]{bia-hac-sal-stochastics19} deals with a family
of measures $(\bbP^{a})_{a \in \cK}$, the proofs remain unchanged when we consider $(\bbP^{\nu})_{\nu \in \cM_{abs}(\cK)}$.

\subsection{Proof of Theorems~\ref{th:main} and \ref{th:main_proj}}\label{sec:proof_main}

Both theorems are proved in the same way. In the following $Q_{\gamma}$ will denote either $\Pg$ and in this case $\sH$ will denote $- \partial F$, or $Q_{\gamma} = \Pg^r$ and $\sH = - \partial F - \cN_r$.
The proof will be done in three steps:
\begin{itemize}
\item Lemma~\ref{ergodicity}: $Q_{\gamma}$ has a unique invariant probability
distribution $\pi_\gamma$, with $\pi_{\gamma} \in \cM_{abs}$ if $Q_{\gamma} = \Pg$ and $\pi_{\gamma}\in \cM^r$ otherwise, moreover $Q_{\gamma}$ is
ergodic in the sense of the Total Variation norm.
\item Lemma~\ref{pr:PH}: The family
 $\{\pi_\gamma\}_{\gamma\in(0,\gamma_0]}$ is tight.
\item Proposition~\ref{pr:acc_inv_flow}: The accumulation points of
 $\{\pi_\gamma\}_{\gamma\in(0,\gamma_0]}$ as $\gamma\to 0$ are invariant for the DI $\dot{\sx} \in \sH(\sx)$.
\end{itemize}
Before stating Lemma~\ref{ergodicity}, we recall a general result on Markov
processes.  Let $Q : \RR^d \times \mcB(\RR^d) \to [0,1]$ be a Markov kernel on
$\RR^d$.  A set $B \subset \RR^d$ is
said to be a small-set for the kernel $Q$ if there exists a positive measure $\rho$
on $\RR^d$ such that $Q(x,A) \geq \rho(A)$ for each $A\in \mcB(\RR^d)$, $x\in B$.
\begin{proposition}
\label{markov-gal}
Assume that $B$ is a small set for $Q$. Furthermore, assume that there
exists a measurable function $W : \RR^d \to [0,\infty)$ that is defined on
$\RR^d$ and bounded on $B$, and a real number $b \geq 0$, such that
\begin{equation}
\label{drift-general}
QW \leq W - 1 + b \1_B .
\end{equation}
Then, $Q$ admits a unique invariant probability distribution $\pi$, and
moreover, the ergodicity result
\begin{equation}
\label{cvg-tv}
\forall x \in \RR^d, \
  \| Q^n(x,\cdot) - \pi \|_{\text{TV}} \xrightarrow[n\to\infty]{} 0
\end{equation}
holds true.
\end{proposition}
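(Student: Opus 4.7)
\textbf{Proof plan for Proposition~\ref{markov-gal}.} The plan is to deduce the statement from the classical Foster--Lyapunov/Meyn--Tweedie theory of Harris recurrent Markov chains, which applies essentially verbatim once the hypotheses are translated into its language. First, the small-set assumption on $B$ means that $B$ is $\rho$-petite in the sense of \cite{mey-twe-livre09}, with $\rho$ a non-trivial minorization measure concentrated (up to normalization) on $B$. Second, the inequality $QW \leq W - 1 + b\1_B$ is exactly Foster's drift condition towards $B$ with Lyapunov function $W$, bounded on $B$. Since $B$ is petite, these two ingredients together imply that $Q$ is $\psi$-irreducible (with $\psi = \rho$) and positive Harris recurrent, by \cite[Theorem~11.3.11]{mey-twe-livre09}.

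Once positive Harris recurrence is established, the existence and uniqueness of an invariant probability measure $\pi$ follow immediately from standard results (\cite[Theorems~10.0.1 and~10.4.4]{mey-twe-livre09}). The drift inequality also gives the integrability $\pi(W) < \infty$ and the bound $\pi(B) > 0$ by integrating $QW \leq W - 1 + b\1_B$ against $\pi$ and using $\pi Q = \pi$.

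For the total variation convergence~\eqref{cvg-tv}, the remaining ingredient is aperiodicity. Starting from the minorization $Q(x,\cdot) \geq \rho(\cdot)$ on $B$, one may reduce to the case $\rho(B) > 0$: either directly, if the applications of this proposition to $\Pg$ and $\Pg^r$ in Lemmas~\ref{ergodicity} and subsequent produce a $\rho$ charging $B$ (which is the generic situation, e.g.\ when a small additive noise as in Proposition~\ref{additive-ss} is used), or otherwise via Nummelin's splitting technique, which converts the 1-small set into a strongly aperiodic atom in an extended chain whose TV ergodicity is equivalent to that of $Q$. In the aperiodic case, \cite[Theorem~13.0.1]{mey-twe-livre09} applied to a positive Harris recurrent aperiodic chain with a petite set and a Foster drift yields the conclusion $\|Q^n(x,\cdot) - \pi\|_{\mathrm{TV}} \to 0$ for every $x \in \RR^d$.

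The routine part is the verification of the hypotheses of Meyn--Tweedie; the only delicate point, and where I would spend most of the writing, is the aperiodicity step, since it is not entailed by the drift inequality alone. In the concrete setting of this paper this is handled by the fact that the minorization measure $\rho$ charges a non-trivial subset of $B$, so strong aperiodicity of $B$ holds and no splitting is required.
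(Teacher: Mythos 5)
Your proposal follows essentially the same route as the paper, whose entire proof consists of citing \cite[Theorem~11.3.4]{mey-twe-livre09} for positive Harris recurrence (hence existence and uniqueness of $\pi$) and \cite[Theorem~13.0.1]{mey-twe-livre09} for the total variation convergence. Your extra attention to aperiodicity is in fact warranted: the paper simply asserts that $Q$ is aperiodic, whereas, as you correctly note, this does not follow from the stated hypotheses alone and must be secured either by $\rho(B)>0$ (which holds in the paper's applications, e.g.\ via Proposition~\ref{additive-ss}) or by a splitting argument.
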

Indeed, by \cite[Theorem~11.3.4]{mey-twe-livre09}, the kernel $Q$ is a so-called
positive Harris recurrent, meaning among others that it has a unique invariant
probability distribution. Moreover, $Q$ is aperiodic, hence the
convergence~\eqref{cvg-tv}, as shown by, \emph{e.g.},
\cite[Theorem~13.0.1]{mey-twe-livre09}.

\begin{lemma}
\label{ergodicity}
Assume that either Assumptions \ref{markov}-\eqref{smallset}
\ref{markov}-\eqref{drift} hold if $Q_{\gamma} = \Pg$ or Assumption~\ref{hyp:smallset_proj} holds and $r \leq R$ if $Q_{\gamma} = \Pg^r$, then for every $\gamma\in (0,\gamma_0]$, the kernel
$Q_{\gamma}$ admits a unique invariant measure $\pi_\gamma$. Moreover,
\begin{equation}
\label{eq:cv-meyne}
\forall x \in \RR^d, \
  \left\| Q_{\gamma}^n(x, \cdot) - \pi_\gamma \right\|_{\text{TV}}
    \xrightarrow[n\to\infty]{} 0 .
\end{equation}
Finally, if $Q_{\gamma} = \Pg$, assumptions of Theorem~\ref{th:rm} hold true and $\gamma \in \Gamma$ then $\pi_\gamma$ is
absolutely continuous w.r.t. the Lebesgue measure. If $Q_{\gamma} = \Pg^r$ and assumptions of Theorem~\ref{th:rm_proj} hold true, then $\pi_{\gamma} \in \cM^r$.
\end{lemma}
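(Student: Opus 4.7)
\textbf{Proof plan for Lemma~\ref{ergodicity}.}

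The plan is to reduce both cases to Proposition~\ref{markov-gal} by exhibiting, in each case, a small set $B$ together with a Lyapunov function $W$ bounded on $B$ that satisfies the unit drift inequality $Q_\gamma W \leq W - 1 + b\,\mathbf{1}_B$. For the $P_\gamma$ case, the small set is $\cl(B(0,R))$ directly from Assumption~\ref{markov}-\eqref{smallset}. The Lyapunov function will be obtained by renormalizing the function $V$ furnished by Assumption~\ref{markov}-\eqref{drift}. For the $P_\gamma^r$ case, the projection step forces the chain into $\cl(B(0,r))\subset \cl(B(0,R))$ after one step, so both the minorization and the drift become essentially trivial. The TV ergodicity~\eqref{eq:cv-meyne} and the uniqueness of $\pi_\gamma$ then follow directly from Proposition~\ref{markov-gal}.

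More concretely, for the $P_\gamma$ case, set $p_0 \eqdef \inf_{B(0,R)^c} p > 0$ (finite and positive by Assumption~\ref{markov}-\eqref{drift}) and define $W \eqdef V / (\alpha(\gamma) p_0)$. Dividing the drift inequality of Assumption~\ref{markov}-\eqref{drift} by $\alpha(\gamma) p_0$ yields
\[
P_\gamma W(x) \leq W(x) - \frac{p(x)}{p_0} + \frac{C}{p_0}\mathbf{1}_{\|x\|\leq R}\,,
\]
and since $p(x)/p_0 \geq 1$ on $B(0,R)^c$ while $p(x)\geq 0$ on $\cl(B(0,R))$, we obtain $P_\gamma W \leq W - 1 + b\,\mathbf{1}_{\cl(B(0,R))}$ with $b = 1 + C/p_0$. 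Boundedness of $W$ on $\cl(B(0,R))$ is immediate from $\sup_{\cl(B(0,R))}V<\infty$. Proposition~\ref{markov-gal} then applies and produces the unique invariant measure $\pi_\gamma$ together with TV convergence.

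For the $P_\gamma^r$ case, note that for every $x\in\RR^d$ the measure $P_\gamma^r(x,\cdot)$ is supported in $\cl(B(0,r))$, since any iterate is a projection onto that ball. Because $r\leq R$, Assumption~\ref{hyp:smallset_proj} shows that $\cl(B(0,r))\subset \cl(B(0,R))$ is a small set for $P_\gamma^r$. Taking $B = \cl(B(0,r))$ and $W(x) = 1 + \mathbf{1}_{\cl(B(0,r))^c}(x)$, one has $P_\gamma^r W(x) = 1$ for every $x$, and the drift $P_\gamma^r W \leq W - 1 + \mathbf{1}_B$ is readily checked on both sides of $B$. Proposition~\ref{markov-gal} again delivers the uniqueness of $\pi_\gamma$ and the TV convergence.

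For the final two assertions on absolute continuity, the key input is that total variation convergence preserves domination by a reference measure: if $\nu_n\to\nu$ in TV and $\nu_n(A)=0$ for every $n$ whenever some reference measure vanishes on $A$, then the same holds for $\nu$. In the $P_\gamma$ case, choose any $\nu\in \cM_{abs}(\RR^d)$; by definition of $\Gamma$ and a simple induction, $\nu P_\gamma^n \in \cM_{abs}(\RR^d)$ for every $n$, and by~\eqref{eq:cv-meyne} we have $\nu P_\gamma^n \to \pi_\gamma$ in TV, so $\pi_\gamma\ll\lambda$. In the $P_\gamma^r$ case, pick $\nu\in\cM^r$; Proposition~\ref{tout-se-vaut-proj} ensures $\nu (P_\gamma^r)^n \in \cM^r$ for every $n$, and decomposing each such measure into its $\lambda$-absolutely continuous part on $\SSS(r)^c$ and its $\varrho_r$-absolutely continuous part on $\SSS(r)$, the same TV-closure argument applied separately to each part yields $\pi_\gamma \in \cM^r$. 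The main obstacle in the whole argument is the clean construction of the Lyapunov function $W$ in the $P_\gamma$ case and the simultaneous handling of the $\gamma$-dependent scaling $\alpha(\gamma)$; once this is carried out, everything else reduces to standard Markov chain theory and a one-line measure-theoretic closure property.
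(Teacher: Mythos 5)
Your proof is correct and follows essentially the same route as the paper: both reduce the first two claims to the Harris-recurrence criterion of Proposition~\ref{markov-gal} by normalizing the drift inequality (your explicit $W = V/(\alpha(\gamma)p_0)$ makes precise what the paper only sketches, and your $W = 1 + \1_{\cl(B(0,r))^c}$ in the projected case is a slightly simpler Lyapunov function than the paper's $\|x\| + r\1_{\|x\|>r}$), and both deduce $\pi_\gamma \ll \lambda$ (resp. $\pi_\gamma \in \cM^r$) from the total-variation convergence combined with the stability of $\cM_{abs}$ (resp. $\cM^r$) under the kernel. The one step worth spelling out is why $\cl(B(0,r))$ is small for $P_\gamma^r$, since Assumption~\ref{hyp:smallset_proj} concerns the unprojected kernel: for $x \in \cl(B(0,r)) \subset \cl(B(0,R))$ one writes $P_\gamma^r(x,A) = P_\gamma(x,\Pi_r^{-1}(A)) \geq \rho_\gamma(\Pi_r^{-1}(A))$.
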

\begin{proof}
By the inequality \eqref{eq:drift}, the kernel $\Pg$ satisfies an inequality
of the type~\eqref{drift-general}, namely,
$\Pg V \leq V - \alpha(\gamma) \theta + C\alpha(\gamma) \1_{\| x \| \leq R}$,
for some $\theta, C>0$. Similarly, under Assumption~\ref{hyp:smallset_proj} and $r \leq R$, we have that for every $x \in \cl({B}(0,r))$:
\begin{equation*}
  \Pg^r(x, A) = \Pg(x, \Pi_r^{-1}(A)) \geq \rho_{\gamma}(\Pi_r^{-1}(A)) \, ,
\end{equation*}
that is to say $\cl({B}(0,r))$ is a small set for $\Pg^r$. Inequality of the type Assumption~\ref{markov}-\eqref{drift}--\eqref{tension} then hold for e.g. $C =r$, $\alpha(\gamma) = 1$, $V = \norm{x} + r \1_{\norm{x}> r}$
 and $p = \norm{x}$.

Consider the case where $Q_{\gamma} = \Pg$, to prove that $\pi_\gamma$ is absolutely continuous w.r.t. the Lebesgue measure,
consider a $\lambda$-null set $A$. By the convergence~\eqref{eq:cv-meyne},
we obtain that for any $x \in \bbR^d$, $\Pg^n(x,A) \rightarrow \pi_{\gamma}(A)$.
Now take $\nu \ll \lambda$. By Proposition~\ref{tout-se-vaut}, we have that
$\nu \Pg^n \ll \lambda$. Hence, by the dominated convergence theorem,
\begin{equation*}
0 = \nu \Pg^n(A) =  \int \Pg^n(x,A) \nu(\dif x) \rightarrow \int \pi_{\gamma}(A) \nu(\dif x) = \pi_{\gamma}(A)\,.
\end{equation*}

If $Q_{\gamma} = \Pg^r$ we obtain the same result with the help of Proposition~\ref{tout-se-vaut-proj}.
\end{proof}

\begin{lemma}\label{pr:PH}
Let either Assumptions \ref{markov}-\eqref{smallset} -- \ref{markov}-\eqref{tension}
hold if $Q_{\gamma} = \Pg$ or Assumption~\ref{hyp:smallset_proj} hold and $r \leq R$ if $Q_{\gamma} = \Pg^r$. Let $\pi_{\gamma}$ be the invariant distribution of  $Q_{\gamma}$. Then,
the family $\{ \pi_{\gamma} : \gamma \in (0, \gamma_0 ]\}$ is tight.
\end{lemma}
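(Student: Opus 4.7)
The case $Q_\gamma = \Pg^r$ is essentially immediate: by Lemma~\ref{ergodicity}, $\pi_\gamma \in \cM^r$, and because the lazy projected recursion~\eqref{eq:lazy_proj} forces $x_n^{\gamma,r}\in\cl(B(0,r))$ for $n\geq 1$, every invariant measure $\pi_\gamma$ must be supported in the common compact set $\cl(B(0,r))$. Tightness of the family $\{\pi_\gamma\}$ is then trivial. I will therefore focus on the non-projected case $Q_\gamma = \Pg$ under Assumption~\ref{markov}.

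The strategy is the standard Foster--Lyapunov argument combined with an ergodic-average inequality, performed carefully enough to bypass the fact that $V$ and $p$ need not be $\pi_\gamma$-integrable a priori. Starting from the drift inequality~\eqref{eq:drift}, I would iterate it by applying $\Pg$ repeatedly and telescoping:
\begin{equation*}
\Pg^n V(x) \leq V(x) - \alpha(\gamma)\sum_{k=0}^{n-1}\Pg^k p(x) + C\alpha(\gamma)\sum_{k=0}^{n-1}\Pg^k\1_{\cl(B(0,R))}(x).
\end{equation*}
Since $V\geq 0$ and $\1_{\cl(B(0,R))}\leq 1$, after dividing by $n\alpha(\gamma)$ this yields
\begin{equation*}
\frac{1}{n}\sum_{k=0}^{n-1}\Pg^k p(x) \leq \frac{V(x)}{n\alpha(\gamma)} + C.
\end{equation*}
Pick any $x_0$ with $V(x_0)<\infty$ (e.g.\ any $x_0\in\cl(B(0,R))$, using Assumption~\ref{markov}-\eqref{drift}). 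Truncate $p$ by setting $p_M=p\wedge M$. Since $p_M$ is bounded, the ergodicity property~\eqref{eq:cv-meyne} of $\Pg$ established in Lemma~\ref{ergodicity} gives $\Pg^k p_M(x_0)\to \pi_\gamma(p_M)$ as $k\to\infty$, hence by Ces\`aro averaging, $\frac{1}{n}\sum_{k=0}^{n-1}\Pg^k p_M(x_0)\to \pi_\gamma(p_M)$. Since $p_M\leq p$, letting $n\to\infty$ in the displayed inequality yields $\pi_\gamma(p_M)\leq C$, and monotone convergence as $M\to\infty$ gives $\pi_\gamma(p)\leq C$, with $C$ being the same constant as in Assumption~\ref{markov}, hence independent of $\gamma$.

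To deduce tightness, I use Assumption~\ref{markov}-\eqref{tension}: for any $\varepsilon>0$, choose $M_\varepsilon$ large enough that $\inf_{\|x\|>M_\varepsilon}p(x)\geq C/\varepsilon$. Markov's inequality then gives $\pi_\gamma(\|x\|>M_\varepsilon)\leq \varepsilon\,\pi_\gamma(p)/C\leq \varepsilon$ for every $\gamma\in(0,\gamma_0]$, which is precisely the tightness of $\{\pi_\gamma\}_{\gamma\in(0,\gamma_0]}$. The one delicate point---and the step I expect to require the most care---is the passage to the limit in $n$: because $p$ itself need not be bounded nor $\pi_\gamma$-integrable a priori, one cannot directly integrate the iterated drift inequality against $\pi_\gamma$ and cancel terms; truncating $p$ and invoking the total-variation ergodicity before removing the truncation via monotone convergence is the cleanest way to sidestep this issue.
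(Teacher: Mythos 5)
Your proposal is correct and follows essentially the same route as the paper: telescope the drift inequality, truncate $p$ at level $M$, pass to the limit to get $\pi_\gamma(p\wedge M)\leq C$, remove the truncation by monotone convergence, and conclude by coercivity of $p$ and Markov's inequality (the projected case being handled identically via compact support). The only difference is cosmetic: you extract $\pi_\gamma(p\wedge M)\leq C$ from the total-variation ergodicity at a single point $x_0$ with $V(x_0)<\infty$ plus Ces\`aro averaging, whereas the paper integrates the averaged inequality directly against $\pi_\gamma$, using its invariance and dominated convergence on the bounded integrand $\bigl[\tfrac{V}{(n+1)\alpha(\gamma)}+C\bigr]\wedge M$; both are valid and rely on the same ingredients from Lemma~\ref{ergodicity}.
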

\begin{proof}
  If $Q_{\gamma} = \Pg^r$ then the family $\pi_{\gamma}$ is supported by $\cl({B}(0,r))$ and is, therefore, tight.
  Otherwise we iterate~\eqref{eq:drift}, to obtain:
  \begin{equation*}
    \sum_{k=0}^n Q_{\gamma}^{k+1} V \leq \sum_{k=0}^n Q_{\gamma}^{k} V - \alpha(\gamma) \sum_{k=0}^n Q_{\gamma}^{k} p + C(n+1)\alpha(\gamma) \, .
  \end{equation*}
  Therefore, since $ 0 \leq Q_{\gamma}^k V < + \infty$ we have:
  \begin{equation*}
    \alpha(\gamma) \sum_{k=0}^n Q_{\gamma}^{k} p \leq V + C(n+1) \alpha(\gamma) \, .
  \end{equation*}
  For a fixed $M > 0$ we will bound now  $\pi_{\gamma} ( p \wedge M)$. Since $\pi_{\gamma}$ is an invariant distribution for $Q_{\gamma}$,
 we have $\pi_\gamma \Pg^k = \pi_\gamma$. Hence, we have:
  \begin{align*}
    \pi_{\gamma}(p \wedge M ) &= \frac{1}{n+1}\sum_{k=0}^n
    \pi_{\gamma} Q_{\gamma}^{k} (p \wedge M) \leq \frac{1}{n+1}\sum_{k=0}^n
    \pi_{\gamma} (Q_{\gamma}^{k} p \wedge M) \\ &\leq
    \pi_{\gamma}\left(\left[\frac{V}{(n +1) \alpha(\gamma)} +
        C\right] \wedge M\right) \, .
  \end{align*}
  Letting $n \rightarrow + \infty$, by the dominated convergence
  theorem we obtain $\pi_{\gamma}(p \wedge M) \leq \pi_{\gamma} ( C
  \wedge M)$. And therefore by monotone convergence theorem $\pi_{\gamma}(p) \leq C$.\\
  Fix now $\varepsilon > 0$, there is a $K >0$ such that $\frac{C}{K}
  \leq \varepsilon$, and by coercivity of $p$ there is $r > 0$ such
  that:
  \begin{equation*}
    \pi_{\gamma}(\norm{x} > r) \leq \pi_{\gamma}(p > K) \leq \frac{C}{K}
  \end{equation*}
  where the last bound comes from Markov's inequality. This concludes the proof.
\end{proof}

The next proposition will show that any accumulation point of $\pi_{\gamma}$ is an invariant measure for the set-valued flow induced by the DI $\dot{\sx}(t) \in \sH(\sx(t))$, first we introduce some definitions. Define the shift operator $\Theta_t :C(\bbR_{+}, \bbR^d)
\rightarrow C(\bbR_{+}, \bbR^d)$ by $\Theta_t (x) = x(t + \cdot)$,
and the projection operator $p_0: C(\bbR_{+}, \bbR^d) \rightarrow
\bbR^d$ by $p_0(x) = x(0)$. Then, we have the following definition (see
\cite{rot-san-13} for details):
\begin{definition}\label{def:inv_flow}
  We say that $\pi \in \cM(\bbR^d)$ is an invariant distribution for the flow induced by the DI $\dot{\sx}(t) \in \sH(\sx(t))$, if there is $\nu \in \cM(C(\bbR_{+}, \bbR^d))$, such that:
  \begin{enumerate}[i)]
    \item $\support \nu \in \overline{\cS_{\sH}(\bbR^d)}$,
    \item $\nu \Theta_t^{-1} = \nu$,
    \item $\nu p_0^{-1} = \pi$.
  \end{enumerate}
\end{definition}

\begin{proposition}\label{pr:acc_inv_flow}
  Let Assumptions ~\ref{hyp:model}--\ref{hyp:zero_in_gamma} and \ref{markov} hold true. Denote by
$\pi_\gamma$ the unique invariant distribution of $\Pg$.
Let $(\gamma_n)$ be a sequence on $(0,\gamma_0]\cap\Gamma$ s.t. $\gamma_n\to 0$ and $\pi_{\gamma_n}$ converges narrowly to some
probability measure $\pi$. Then, $\pi$ is an invariant distribution for the flow induced by $\dot{\sx}(t) \in - \partial F(\sx(t))$.

Similarly, under Assumptions~\ref{hyp:model}--\ref{hyp:model-reinf} and \ref{hyp:proj_ker}--\ref{hyp:smallset_proj}, for $r \leq R$, denoting $\pi_{\gamma}$ the unique invariant distribution of $\Pg^r$, if $\pi_{\gamma_n} \rightarrow \pi$, then $\pi$ is an invariant distribution for the flow induced by $\dot{\sx}(t) \in - \partial F(\sx(t)) - \cN_r(\sx(t))$.
\end{proposition}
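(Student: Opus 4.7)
The plan is to construct, for each $\gamma$, a probability measure $\bar\nu_\gamma$ on $C(\RR_+,\RR^d)$ as the law of the piecewise affine interpolated process $\sx^\gamma$ started with $x_0 \sim \pi_\gamma$; then extract a subsequential limit $\bar\nu$ along $(\gamma_n)$ and verify the three items of Definition~\ref{def:inv_flow}. The identification $\bar\nu \circ p_0^{-1} = \pi$ (item (iii)) is immediate from the continuity of $p_0$ and the narrow convergence $\pi_{\gamma_n} \to \pi$, so the real work is tightness of $\{\bar\nu_{\gamma_n}\}$, support in $\overline{\cS_\sH(\RR^d)}$ (item (i)), and shift invariance (item (ii)).

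For tightness, I would combine Lemma~\ref{pr:PH} with the tightness statement at the end of Theorem~\ref{th:apt} (resp.\ Theorem~\ref{th:apt_proj}). Given $\varepsilon>0$, Lemma~\ref{pr:PH} furnishes a compact $\mK$ with $\pi_\gamma(\mK) \geq 1-\varepsilon$ uniformly in $\gamma \in (0,\gamma_0]$. Since $\pi_\gamma \in \cM_{abs}$ (resp.\ $\cM^r$) by Lemma~\ref{ergodicity}, its normalized restriction to $\mK$ lies in $\cM_{abs}(\mK)$ (resp.\ $\cM^r(\mK)$), so Theorem~\ref{th:apt} (resp.\ \ref{th:apt_proj}) gives tightness of the corresponding family of interpolated laws; the $\varepsilon$-remainder is handled by letting $\varepsilon$ shrink along a further diagonal argument. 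In the projected case $\pi_\gamma$ is supported on $\cl(B(0,r))$ which is compact, so this step is even more direct. Item (i) then follows from Theorem~\ref{th:apt} (resp.\ \ref{th:apt_proj}): the interpolated process converges in probability, in the metric $\bs d_C$, to the closed set $\cS_\sH(\RR^d)$, so any narrow limit $\bar\nu$ charges $\overline{\cS_\sH(\RR^d)}$ with full mass by the portmanteau theorem.

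The key step is the shift invariance. Since $\pi_\gamma Q_\gamma = \pi_\gamma$, the Markov property implies that for every integer $k \geq 0$ the law of $(x_{n+k}^\gamma)_{n \geq 0}$ coincides with that of $(x_n^\gamma)_{n \geq 0}$, which translates on the interpolated process to $\bar\nu_\gamma \circ \Theta_{k\gamma}^{-1} = \bar\nu_\gamma$. For an arbitrary $t>0$, set $k_n = \lfloor t/\gamma_n\rfloor$ so that $k_n\gamma_n \to t$ and $|t - k_n\gamma_n| < \gamma_n$. For any bounded, $\bs d_C$-uniformly continuous $\phi$ on $C(\RR_+,\RR^d)$, I would write
\[
\int \phi \circ \Theta_t \, d\bar\nu_{\gamma_n}
 = \int \phi \circ \Theta_{k_n\gamma_n}\, d\bar\nu_{\gamma_n}
   + r_n
 = \int \phi\, d\bar\nu_{\gamma_n} + r_n,
\]
where the first equality uses uniform continuity of $\phi$ together with the uniform (in $s \in [0,\gamma_n]$) smallness of $\bs d_C(\Theta_s \sx, \sx)$ on $\bs d_C$-compact sets, which is ensured by tightness of $\{\bar\nu_{\gamma_n}\}$, and the second equality uses the discrete invariance. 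Sending $n \to \infty$ and applying narrow convergence to both sides yields $\int \phi \circ \Theta_t\, d\bar\nu = \int \phi\, d\bar\nu$, hence $\bar\nu \circ \Theta_t^{-1} = \bar\nu$.

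I expect the main obstacle to lie precisely in this last limiting argument: one must control the error produced by the fractional offset $t - k_n\gamma_n \in [0,\gamma_n)$ uniformly in $n$. The uniform control rests on tightness (so the family of paths is equicontinuous in a probabilistic sense) and on the linear-growth bound from Assumption~\ref{hyp:model-reinf} (which, via $\|x_{n+1}^{\gamma}-x_n^{\gamma}\|\leq \gamma K(1+\|x_n^\gamma\|)$ in expectation, ensures the fractional-step increment vanishes). For the projected case this argument is simpler since paths stay in $\cl(B(0,r))$, so $\Theta_s$ varies continuously with $s$ on all of $\support \bar\nu_\gamma$ uniformly in $\gamma$.
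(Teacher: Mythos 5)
Your proposal is correct and follows essentially the same route as the paper: restrict $\pi_{\gamma_n}$ to a compact set from Lemma~\ref{pr:PH} to land in $\cM_{abs}(\mK)$ (resp.\ $\cM^r(\mK)$), invoke Theorem~\ref{th:apt} (resp.\ Theorem~\ref{th:apt_proj}) for tightness and for item (i) via portmanteau, and get item (iii) from continuity of $p_0$. The only difference is that for shift invariance (item (ii)) the paper simply cites \cite[Section 7]{bia-hac-sal-stochastics19}, whereas you spell out the same standard argument (discrete invariance $\bar\nu_\gamma\circ\Theta_{k\gamma}^{-1}=\bar\nu_\gamma$ plus control of the fractional offset via equicontinuity on $\bs d_C$-compact sets), which is sound.
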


\begin{proof}
Consider the case where $Q_{\gamma} = \Pg$.
The proof essentially follows \cite[section 7.]{bia-hac-sal-stochastics19}. Fix an $\varepsilon > 0$ and write $\pi_n$ instead of $\pi_{\gamma_n}$ for simplicity.
By Lemma~\ref{pr:PH} we have a compact $K$ such that $\pi_{n}(K) > 1 - \varepsilon$, we thus can define the conditional measures $\pi_{n}^{K}(A) := \frac{\pi_{n}(A \cap K)}{\pi_{n}(K)}$.
Moreover, we have $\pi_{n}^K \in \cM_{abs}(K)$, therefore we can apply Theorem~\ref{th:apt} and get that there is a compact set $\cC$ of $C(\bbR^{+}, \bbR^d)$ such that $\bbP^{\pi_{\gamma_n}^K, \gamma_n} \sX_{\gamma_n}^{-1}(\cC) \geq 1 - \varepsilon$. Now we have
  \begin{equation*}
    \bbP^{\pi_{n}, \gamma_n}(\cdot) = \int_{\bbR^d}\bbP^{a, \gamma_n} (\cdot) \pi_{n}(\dif a) \geq \int_K \bbP^{a, \gamma_n} (\cdot) \pi_{n}(\dif a)\geq \pi_{n}(K)\bbP^{\pi_n^K, \gamma_n}(\cdot) \, ,
  \end{equation*}
hence
  \begin{equation*}
    \bbP^{\pi_{\gamma_n}, \gamma_n}\sX_{\gamma_n}^{-1}(\cC) \geq \pi_{n}(K) \bbP^{\pi_{\gamma_n}^K, \gamma_n} \sX_{\gamma_n}^{-1}(\cC) \geq (1 - \varepsilon)^2 \, .
  \end{equation*}
  Since $\varepsilon $ is arbitrary this proves the tightness of $v_n:=\bbP^{\pi_{\gamma_n}, \gamma_n}\sX_{\gamma_n}^{-1}$. Take $\pi_n \rightarrow \pi$ and $v_n \rightarrow v \in \cM(C(\bbR_{+}, \bbR^d))$.
 We now prove that $v$ is an invariant distribution for the flow induced by the DI associated to $-\partial F$ (see Definition~\ref{def:inv_flow}.)\\
  We have $\pi_n = v_n p_0^{-1}$, by continuity of $p_0$. Thus, $\pi = v p_0^{-1}$. Therefore, we have (iii) of Definition~\ref{def:inv_flow}.
  Let $\eta > 0$. By weak convergence of $v_n$,
  \begin{equation*}
  \begin{split}
       v(\{x\in C(\bbR_+, \bbR^d) : d(x, \cS_{- \partial F}(\bbR^d)) \leq \eta \}) &\geq \limsup_{n} v_n(\{x\in C(\bbR_+, \bbR^d) : d(x, \cS_{- \partial F}(\bbR^d)) \leq \eta \}) \,
  \end{split}
  \end{equation*}
  and
  \begin{equation*}
  \begin{split}
        v_n(\{x\in C(\bbR_+, \bbR^d) : d(x, \cS_{- \partial F}(\bbR^d)) \leq \eta \}) &\geq v_n( \{x\in C(\bbR_+, \bbR^d) : d(x, \cS_{- \partial F}(K)) < \eta \}) \\
        &\geq \pi_{n}(K)\bbP^{\pi_{\gamma_n}^K, \gamma_n}(d(\sX^{\gamma_n}, \cS_{- \partial F}(K)) < \eta)\\
        &\geq (1 - \varepsilon)\bbP^{\pi_{\gamma_n}^K, \gamma_n}(d(\sX^{\gamma_n}, \cS_{- \partial F}(K)) < \eta) \, .
  \end{split}
  \end{equation*}
  The last term converges to $1- \varepsilon$, by Theorem~\ref{th:apt}, and by weak convergence we have $v(\{x\in C(\bbR_+, \bbR^d) : d(x, \cS_{- \partial F}(\bbR^d)) \geq \eta \}) \geq (1- \varepsilon)$, now letting $\eta \rightarrow 0$,
  by monotone convergence we have $v(\cS_{- \partial F}(\bbR^d))) \geq 1 - \varepsilon$ which proves (i) of Definition~\ref{def:inv_flow}.
  Finally, the second point of Definition~\ref{def:inv_flow} is shown just like in \cite[section 7.]{bia-hac-sal-stochastics19}.

  The proof of the case $Q_{\gamma} = \Pg^r$ is substantially the same under straightforward adaptations.
\end{proof}

After some definitions we recall an important result about the support of a flow-invariant measure.
The limit set $L_f$ of a function $f \in C(\RR_+,\RR^d)$ is
\[
L_f = \bigcap_{t\geq 0} \overline{f([t,\infty))} ,
\]
and the limit set $L_{\cS_{\sH}(a)}$ of a point $a\in\RR^d$ for
$\cS_{\sH}$ is
\[
L_{\cS_{\sH}(a)} = \bigcup_{\sx\in \cS_{\sH}(a)} L_{\sx}.
\]
A point $a \in\RR^d$ is said $\cS_{\sH}$-recurrent if
$a \in L_{\cS_{\sH}(a)}$. The Birkhoff center
$\text{BC}_{\cS_{\sH}}$ of $\cS_{\sH}$ is the closure of the
set of its recurrent points:
\[
\text{BC}_{\cS_{\sH}} =
\overline{
\Bigl\{
a \in \RR^d \ : \ a \in L_{\cS_{\sH}(a)} \Bigr\}} .
\]
In~\cite{fau-rot-13} (see also~\cite{aub-fra-las-91}), a version of
Poincar\'e's recurrence theorem, well-suited for our set-valued evolution
systems, was provided:
\begin{proposition}
\label{poincare}
Each invariant measure for $\cS_{\sH}$ is supported by
$\text{BC}_{\cS_{\sH}}$.
\end{proposition}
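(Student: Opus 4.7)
The plan is to lift the problem from $\RR^d$ to the path space $C(\RR_+,\RR^d)$, where the set-valued semiflow becomes a genuine single-valued semigroup action (the time-shift), and then apply the classical Poincar\'e recurrence theorem in the Polish space setting.

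More precisely, let $\pi$ be an invariant measure for $\cS_\sH$, and let $\nu$ be the associated probability measure on $C(\RR_+,\RR^d)$ provided by Definition~\ref{def:inv_flow}. The key observation is that (ii) of Definition~\ref{def:inv_flow} asserts that $\nu$ is invariant under the one-parameter semigroup of shifts $\Theta_t : C(\RR_+,\RR^d) \to C(\RR_+,\RR^d)$. Since $C(\RR_+,\RR^d)$ equipped with $\bs d_C$ is a Polish space, the classical Poincar\'e recurrence theorem (in its Polish-space form) applies: for $\nu$-almost every $\sx$, the orbit $\{\Theta_t \sx : t \geq 0\}$ returns infinitely often to every open neighborhood of $\sx$. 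In other words, there is a sequence $t_k \to \infty$ with $\Theta_{t_k}\sx \to \sx$ in the metric $\bs d_C$.

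Now I would translate this path-space recurrence back to $\RR^d$. Evaluating at zero, convergence in $\bs d_C$ entails in particular that $\sx(t_k) = (\Theta_{t_k}\sx)(0) \to \sx(0)$, so $\sx(0) \in L_{\sx}$. Under the assumptions we are using ($\sH$ upper semicontinuous with compact convex values and satisfying the linear growth condition~\eqref{eq:di_lin_growth}), $\cS_\sH(\RR^d)$ is a closed subset of $(C(\RR_+,\RR^d),\bs d_C)$, so condition (i) of Definition~\ref{def:inv_flow} actually gives $\support \nu \subset \cS_\sH(\RR^d)$. Consequently, for $\nu$-almost every $\sx$, the path $\sx$ is itself a solution starting from $\sx(0)$, and the inclusion $\sx(0) \in L_{\sx} \subset L_{\cS_\sH(\sx(0))}$ shows that $\sx(0)$ is $\cS_\sH$-recurrent.

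Denoting by $R = \{a \in \RR^d : a \in L_{\cS_\sH(a)}\}$ the set of recurrent points, the previous step yields $\nu(\{\sx : \sx(0) \in R\}) = 1$, and by (iii) of Definition~\ref{def:inv_flow},
\[
\pi(R) \;=\; \nu(p_0^{-1}(R)) \;=\; 1,
\]
so $\support \pi \subset \overline{R} = \mathrm{BC}_{\cS_\sH}$, which is the claim. The main obstacle in this plan is to justify carefully the Polish-space version of Poincar\'e recurrence for the shift semigroup: one must verify that a countable basis of open sets in $(C(\RR_+,\RR^d),\bs d_C)$ exists, so that the standard measure-theoretic argument (the set of points visiting a given open set only finitely often has measure zero) can be aggregated over a countable basis to yield recurrence $\nu$-almost surely. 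Everything else is bookkeeping — the only other delicate point being that one checks the support condition (i) lands inside $\cS_\sH(\RR^d)$ itself, not merely its closure, which is why the standing linear-growth/upper-semicontinuity hypotheses on $\sH$ are invoked.
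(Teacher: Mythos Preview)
Your argument is correct and is in fact the standard route to this result: lift the set-valued dynamics to the path space, where the shift $\Theta_t$ is a genuine single-valued measure-preserving map on a Polish space, apply the classical Poincar\'e recurrence theorem there, and push the conclusion down via $p_0$. The only minor remark is that it suffices to apply Poincar\'e recurrence to the single transformation $\Theta_1$ (which already yields a sequence of integer times $n_k\to\infty$ with $\Theta_{n_k}\sx\to\sx$, hence $\sx(n_k)\to\sx(0)$); there is no need to handle the full continuous-time semigroup.

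As for the comparison: the paper does not give its own proof of this proposition. It is stated as a quotation of a known result, with a reference to \cite{fau-rot-13} (and \cite{aub-fra-las-91}) for the set-valued version of Poincar\'e's recurrence theorem. Your sketch is precisely the argument developed in those references (and in the Roth--Sandholm framework \cite{rot-san-13} that the paper already invokes for Definition~\ref{def:inv_flow}), so you have essentially reconstructed the cited proof.
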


With the help of Proposition~\ref{poincare} we can finally prove Theorem~\ref{th:main}.
\begin{proof}

Take $\gamma \in \Gamma$, $\varepsilon > 0$ and $(x^{\gamma}_n)$ an associated SGD sequence.
We have by~\eqref{cvg-tv}:
\begin{equation*}
\limsup_{n\to\infty}
 \PP^{\nu}\left[ \dist( x^{\gamma}_n, \cZ) > \varepsilon \right] = \pi_{\gamma}(\{ x \in \bbR^d : d(x, \cZ) > \varepsilon\}) \, .
\end{equation*}
 Now take any sequence $\gamma_i \rightarrow 0$ with $\gamma_i \in \Gamma$, and $\pi_{\gamma_i}$ the associated invariant distribution, we know from Lemmas~\ref{pr:PH}-\ref{pr:acc_inv_flow} that we can extract a subsequence such that
$\pi_{\gamma_i} \rightarrow \pi$, with $\pi$ an invariant measure for the evolution system $\cS_{- \partial F}$. Therefore by weak convergence we have:
\begin{equation*}
\begin{split}
\lim_{i \rightarrow +\infty} \pi_{\gamma_i}(\{ x \in \bbR^d : d(x, \cZ ) > 2 \varepsilon\}) &\leq \lim_{i \rightarrow + \infty}\pi_{\gamma_i}(\{ x \in \bbR^d : d(x, \cZ ) \geq \varepsilon\}) \\
&\leq \pi(\{ x \in \bbR^d : d(x, \cZ ) \geq \varepsilon\}),
\end{split}
\end{equation*}
where the last line comes from the Portmanteau theorem. We show  that
$\support \pi \subset S$, and therefore the last term is equal to zero, which
concludes the proof. To that end, we make use of Proposition~\ref{poincare},
that shows that each invariant measure of $\cS_{-\partial F}$ is supported by
$\text{BC}_{\cS_{-\partial F}}$. Thus, it remains to show that
$\text{BC}_{\cS_{-\partial F}} = \cZ$ (which at the same time will ensure us that $\cZ$ is nonempty). It is obvious that $\cZ \subset
\text{BC}_{\cS_{-\partial F}}$. To show the reverse inclusion, take
$a \in L_{\cS_{-\partial F}(a)}$. Then, there exists a solution $\sx$ to the
differential inclusion such that  $\sx(0) = a$ and $a \in L_{\sx}$. But under
Assumption~\ref{hyp:chrule} it holds (\cite[lemma 5.2]{dav-dru-kak-lee-19})
that $\| \dot \sx(t) \| = \| \partial_0 F(\sx(t)) \|$ almost everywhere, and,
moreover,
\[
\forall t \geq 0, \quad
 F(\sx(t)) - F(\sx(0)) = - \int_0^t \| \partial_0 F(\sx(u)) \|^2 du .
\]
Therefore $\sx(t) = a$ for each $t \geq 0$,
thus, $a \in S$. Observing that
$\cZ$ is a closed set (since $\partial F$ is graph-closed,
see~\cite[Proposition 2.1.5]{cla-led-ste-wol-livre98}), we obtain that
$\text{BC}_{\cS_{-\partial F}} = \cZ$.

Similarly, take $\gamma_i \rightarrow 0$ and and $(x^{\gamma_i,r}_n)$ the associated  projected SGD sequences. After an extraction we get that $\pi_{\gamma_i} \rightarrow \pi$, with $\pi$ an invariant measure for the flow $\cS_{-\partial F - \cN_r}$ and:

\begin{equation*}
  \lim_{\gamma_i \rightarrow 0}\limsup_{n\to\infty}
   \PP^{\nu}\left[ \dist( x^{\gamma_i, r}_n, \cZ_r) > 2\varepsilon \right] \leq \pi(\{ x \in \bbR^d : d(x, \cZ_r) > \varepsilon\}) \, .
\end{equation*}

Taking $a \in L_{\cS_{- \partial F - \cN_r}(a)}$, and $\sx$ a solution to the associated differential inclusion with $\sx(0) = a$, we get under Assumption~\ref{hyp:chrule} \cite[Lemma 6.3.]{dav-dru-kak-lee-19}  that $\norm{\dot{\sx}(t)} = \min \{ \norm{v}: v \in \partial F(\sx(t)) +\cN_r(\sx(t))\}$, and moreover,

\begin{equation*}
  \forall t \geq 0, \quad F(\sx(t)) - F(\sx(0)) = - \int_0^t \norm{\dot{\sx}(u)}^2 \dif u \, .
\end{equation*}
That is to say $\sx(t) = a$ and $a \in \cZ_r$, which finishes the proof.
\end{proof}
%


\subsection{Proof of Proposition~\ref{additive-ss}}

Denote as $\rho$ the probability distribution of the random variable $\gamma
\epsilon_1$. By assumption, $\rho$ has a continuous density that is positive at
each point of $\RR^d$. We denote as $f$ this density.
Let $\theta_x$ be the probability distribution of the random variable
$Z = x - \gamma\varphi_0(x, \xi_1)$, which is the image of $\mu$ by the
function $x - \gamma\varphi_0(x,\cdot)$. Our purpose is to show that
\[
\exists \varepsilon > 0, \
\forall x \in \cl({B}(0,R)), \ \forall A \in \mcB(\RR^d), \
(\theta_x \otimes \rho) \left[ Z + \gamma\eta_1 \in A \right]
  \geq \varepsilon \,  \lambda( A \cap \cl({B}(0,1)) ) .
\]
Given $L > 0$, we have by Assumption~\ref{hyp:model-reinf} and Markov's
inequality that there exists a constant $K > 0$ such that
\[
\theta_x \left[ Z \not\in \cl({B}(0,L)) \right]
 \leq \frac KL ( 1 + \| x \| ).
\]
Thus, taking $L$ large enough, we obtain that $\forall x \in \cl({B}(0,R)),
\ \theta_x \left[ Z \not\in \cl({B}(0,L)) \right] < 1/2$. Moreover, we
can always choose $\varepsilon > 0$ is such a way that
$f(u) \geq 2 \varepsilon$ for $u \in \cl({B}(0,L+1))$, by the continuity
and the positivity of $f$ on the compact $\cl({B}(0,L+1))$. Thus,
\begin{align*}
(\theta_x \otimes \rho) \left[ Z + \gamma\eta_1 \in A \right] &=
\int_A du \int_{\RR^d} \theta_x(dv) \, f(u-v) \\
&\geq \int_{A\cap\cl({B}(0,1))} du \int_{\cl({B}(0,L))}
  \theta_x(dv) \, f(u-v) \\
&\geq 2 \varepsilon \int_{A\cap\cl({B}(0,1))} du \int_{\cl({B}(0,L))}
  \theta_x(dv) \\
&\geq \varepsilon \, \lambda (A \cap \cl({B}(0,1))) .
\end{align*}

\subsection{Proof of Proposition~\ref{prop:PHsufficient}}

By Lebourg's mean value theorem~\cite[Theorem~2.4]{cla-led-ste-wol-livre98}, for
each $n \in \NN$, there exists $\alpha_n \in [0,1]$ and
$\zeta_n \in \partial F(u_n)$ with
$u_n = x_n - \alpha_n \gamma \nabla f(x_n, \xi_{n+1}) \1_{\Delta_f}(x_n, \xi_{n+1})$, such that
\[
F(x_{n+1}) = F(x_n) -\gamma \ps{\zeta_n, \nabla f(x_n, \xi_{n+1})} \1_{\Delta_f}(x_n, \xi_{n+1}),
\]
and the proof of this theorem (see~\cite[Theorem~2.4]{cla-led-ste-wol-livre98}
again) shows that $u_n$ can be chosen measurably as a function of
$(x_n, \xi_{n+1})$.

In the following, for the ease of readability, we  make use of shorthand
(and abusive) notations of the
type $\1_{\| x \| > 2R} \ps{\nabla F(x), \cdots}$ to refer to
$\ps{\nabla F(x), \cdots}$ if $\| x \| > 2R$ and to zero if not.
We also denote $\nabla f(x_n, \xi_{n+1})$ as $\nabla f_{n+1}$ to shorten the equations.
We write
\begin{align}
F(x_{n+1}) &= F(x_n)
   - \gamma \1_{\| x_n \| \leq 2R} \ps{\zeta_n,\nabla f_{n+1}} \1_{\Delta_f}(x_n, \xi_{n+1}) \nonumber \\
   &\phantom{=}
   - \gamma \1_{\| x_n \| > 2R} \ps{\zeta_n - \nabla F(x_n),
                           \nabla f_{n+1}}
   - \gamma \1_{\| x_n \| > 2R} \ps{\nabla F(x_n), \nabla f_{n+1}} .
\nonumber
\end{align}
We shall prove that
\begin{align}
\EE_n &F(x_{n+1}) \leq F(x_n)
   - \gamma \1_{\| x_n \| > 2R} \| \nabla F(x_n) \|^2 + \gamma K \1_{\| x_n \| \leq 2R}\nonumber \\
&+ \gamma^2 K \1_{\| x_n \| > 2R}
  \left( (1+ \|\nabla F(x_n) \| )
  \Bigl(\int \|\nabla f(x_n, s) \|^2 \, \mu(ds)\Bigr)^{1/2}
 +  \int \| \nabla f(x_n, s) \|^2 \, \mu(ds) \right)
\label{Fdrift}
\end{align}
where the constant $K > 0$ is an absolute finite constant that can change from
line to line in the derivations below. To that end, we write
\begin{align}
F(x_{n+1}) &= F(x_n) - \gamma \1_{\| x_n \| \leq 2R} \1_{\|u_n \| \leq R}
\ps{\zeta_n, \nabla f_{n+1}} \1_{\Delta_f}(x_n, \xi_{n+1}) \nonumber\\
&\phantom{=}
   - \gamma \1_{\| x_n \| \leq 2R} \1_{\| u_n \| > R}
   \ps{\zeta_n, \nabla f_{n+1}} \1_{\Delta_f}(x_n, \xi_{n+1}) \nonumber \\
&\phantom{=}
   - \gamma \1_{\| x_n \| > 2R} \1_{\|u_n\| \leq R}
      \ps{\zeta_n - \nabla F(x_n), \nabla f_{n+1}} \nonumber \\
&\phantom{=}
   - \gamma \1_{\| x_n \| > 2R} \1_{\|u_n\| > R}
      \ps{\nabla F(u_n) - \nabla F(x_n), \nabla f_{n+1}} \nonumber \\
&\phantom{=}
   - \gamma \1_{\| x_n \| > 2R} \ps{\nabla F(x_n), \nabla f_{n+1}}
\label{Fn+1n}
\end{align}
We start with the second term at the right hand side of this inequality.
Noting from Assumption~\ref{hyp:model-reinf} that
\[
\1_{\|u_n \| \leq R} \| \zeta_n \| \leq
\sup_{\| x \| \leq R} \| \partial F(x) \|
\leq \sup_{\| x \| \leq R} \int \| \partial f(x,s) \| \, \mu(ds)
\leq \sup_{\| x \| \leq R} \int \kappa(x,s) \, \mu(ds)
 \leq K ,
\]
we have
\[
\gamma \1_{\| x_n \| \leq 2R} \1_{\|u_n \| \leq R}
   | \ps{\zeta_n, \nabla f (x_n, \xi_{n+1})} | \leq
\gamma K \1_{\| x_n \| \leq 2R} \| \nabla f_{n+1} \| ,
\]
and by integrating with respect to $\xi_{n+1}$ and using
Assumption~\ref{hyp:model-reinf} again, we get that
\begin{equation}
\label{ineq1}
\gamma \1_{\| x_n \| \leq 2R} \EE_n [ \1_{\|u_n \| \leq R}
   | \ps{\zeta_n, \nabla f_{n+1}}\1_{\Delta_f}(x_n, \xi_{n+1}) | ] \leq
\gamma K \1_{\| x_n \| \leq 2R}  .
\end{equation}

Using Assumption~\ref{hyp:model-reinf}, the next term at the right hand side
of~\eqref{Fn+1n} can be bounded as
\begin{align*}
   & \gamma \1_{\| x_n \| \leq 2R} \1_{\| u_n \| > R}
   | \ps{\zeta_n, \nabla f_{n+1}}1_{\Delta_f}(x_n, \xi_{n+1}) | \\
 &\leq
   \gamma \1_{\| x_n \| \leq 2R} \1_{\| u_n \| > R}
   \| \nabla F(u_n) \| \, \| \nabla f_{n+1}  \| \\
&\leq
   \gamma \1_{\| x_n \| \leq 2R}
  K \left( 1 + \| x_n \| + \gamma \|\nabla f_{n+1} \|\right)
      \| \nabla f_{n+1} \|   \\
&\leq
   \gamma K \1_{\| x_n \| \leq 2R}
  \left( 1 + \|\nabla f_{n+1} \| +
     \gamma \|\nabla f_{n+1} \|^2 \right)  ,
\end{align*}
which leads to
\begin{equation}
\label{ineq2}
   \gamma \1_{\| x_n \| \leq 2R} \EE_n [ \1_{\| u_n \| > R}
   | \ps{\zeta_n, \nabla f_{n+1}}1_{\Delta_f}(x_n, \xi_{n+1}) | ] \leq
   \gamma K \1_{\| x_n \| \leq 2R}
\end{equation}
by using Assumption~\ref{hyp:model-reinf}.

We tackle the next term at the right hand side of~\eqref{Fn+1n}.
Fix a $x_\star \not\in \cl (B(0,R))$. By our assumptions it holds
that each $x \not\in \cl(B(0,R))$,
\[
\| \nabla f(x,s) \| \leq \| \nabla f(x_\star,s) \| + \beta(s)\| x - x_\star \|
 \leq \beta'(s) ( 1 + \| x \|),
\]
where $\beta'(\cdot)$ is square integrable thanks to Assumption~\ref{hyp:model-reinf}.
Since
\[
\int \beta'(s)^2 \mu(ds) =
\int_0^\infty \mu[ \beta'(\cdot) \geq \sqrt{t} ] \, dt < \infty,
\]
it holds that $\mu[ \beta'(\cdot) \geq 1/{t} ] = o_{t\to 0}(t^2)$.
Using triangle inequality, we get that
\begin{align*}
\1_{\| x_n \| > 2R} \1_{\|u_n\| \leq R} &=
\1_{\| x_n \| > 2R}
  \1_{\|x_n - \alpha_n \gamma \nabla f_{n+1}\| \leq R}
 \leq \1_{\| x_n \| > 2R}
  \1_{\|\nabla f_{n+1}\| \geq (\| x_n \| - R)/\gamma}  \\
 &\leq \1_{\| x_n \| > 2R}
   \1_{\beta'(\xi_{n+1}) \geq \frac{\|x_n\| - R}{\gamma(1 + \|x_n\|)}}
 \leq \1_{\| x_n \| > 2R}
   \1_{\beta'(\xi_{n+1}) \geq \frac{R}{\gamma(1 + 2R)}} .
\end{align*}
Using this result, we write
\begin{align*}
 \gamma \1_{\| x_n \| > 2R} \1_{\|u_n\| \leq R}
      | \ps{\zeta_n, \nabla f_{n+1}} |
 &\leq K \gamma \1_{\| x_n \| > 2R} \1_{\|u_n\| \leq R}
 \| \nabla f_{n+1} \| \\
 &\leq K \gamma \1_{\| x_n \| > 2R} \| \nabla f_{n+1} \|
  \1_{\beta'(\xi_{n+1}) \geq \frac{R}{\gamma(1 + 2R)}}  \\
\end{align*}
Consequently,
\begin{align}
 \gamma \1_{\| x_n \| > 2R} \EE_n[ \1_{\|u_n\| \leq R}
      | \ps{\zeta_n, \nabla f_{n+1}} | ]
 &\leq
 \gamma K \1_{\| x_n \| > 2R}
  \Bigl( \int \| \nabla f(x_n, s) \|^2 \, \mu(ds) \Bigr)^{1/2}
  \mu[ \beta'(\cdot) \geq K / \gamma ]^{1/2} \nonumber \\
 &\leq \gamma^2 K \1_{\| x_n \| > 2R}
  \Bigl( \int \| \nabla f(x_n, s) \|^2 \, \mu(ds) \Bigr)^{1/2} .
\label{ineq3}
\end{align}

Similarly,
\begin{align*}
 \gamma \1_{\| x_n \| > 2R} \1_{\|u_n\| \leq R}
      | \ps{\nabla F(x_n), \nabla f_{n+1}} |
 &\leq \gamma  K \1_{\| x_n \| > 2R} \| \nabla F(x_n) \| \,
      \| \nabla f_{n+1} \|
  \1_{\beta'(\xi_{n+1}) \geq \frac{R}{\gamma(1 + 2R)}} ,
\end{align*}
thus,
\begin{equation}
\gamma \1_{\| x_n \| > 2R} \EE_n \left[ \1_{\|u_n\| \leq R}
      | \ps{\nabla F(x_n), \nabla f_{n+1}} | \right]
 \leq \gamma^2 K \1_{\| x_n \| > 2R} \| \nabla F(x_n) \|
  \Bigl( \int \| \nabla f(x_n, s) \|^2 \, \mu(ds) \Bigr)^{1/2} .
\label{ineq4}
\end{equation}
We have that $\nabla F$ is Lipschitz outside
$\cl(B(0,R))$. Thus, the next to last term at the right hand side
of~\eqref{Fn+1n} satisfies
\[
  \gamma \1_{\| x_n \| > 2R} \1_{\|u_n\| > R}
 | \ps{\nabla F(u_n) - \nabla F(x_n), \nabla f_{n+1}} |
\leq
 \gamma^2 K \1_{\| x_n \| > 2R}
 \| \nabla f_{n+1} \|^2 ,
\]
and we get that
\begin{equation}
  \gamma \1_{\| x_n \| > 2R} \1_{\|u_n\| > R}
 \EE_n \left[ | \ps{\nabla F(u_n) - \nabla F(x_n), \nabla f_{n+1}} | \right]
 \leq
 \gamma^2 K \1_{\| x_n \| > 2R}
  \int \| \nabla f(x_n, s) \|^2 \mu(ds) .
\label{ineq5}
\end{equation}

Finally, we have
\begin{equation}
 - \gamma \1_{\| x_n \| > 2R}
  \EE_n \left[\ps{\nabla F(x_n), \nabla f_{n+1}}\right] =
 - \gamma \1_{\| x_n \| > 2R} \| \nabla F(x_n) \|^2 .
\label{ineq6}
\end{equation}
Inequalities~\eqref{ineq1}--\eqref{ineq6} lead to~\eqref{Fdrift}.

Using Assumption (iii) of Proposition~\ref{prop:PHsufficient},
Inequality~\eqref{Fdrift} leads to Inequality~\eqref{Fdrift2}.
The validity of Assumptions~\ref{markov}-\eqref{drift} and
\ref{markov}-\eqref{tension} can then be checked easily.

\subsection{Proof of Proposition~\ref{tout-se-vaut-proj}}

The next Lemma is the key ingredient in the proofs of
Section~\ref{sec:proj-sgd}.

\begin{lemma}\label{lm:good_r}
Assume that $f(\cdot, s)$ is locally Lipschitz continuous for every $s \in
\Xi$. Then for $\lambda^1 \otimes \lambda^d \otimes \mu$-almost all
$(r, x, s)$ with $r > 0$, it holds that $(\Pi_r(x),s) \in \Delta_f$. For
$\lambda^1 \otimes \lambda^d$-almost all $(r,x)$ with $r > 0$, it holds
that $\Pi_r(x)\in\cD_F$.
\end{lemma}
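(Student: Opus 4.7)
My plan is to reduce both statements to the single fact that if $N \subset \RR^d$ is a Lebesgue-null set, then $\{(r,x) \in (0,\infty)\times\RR^d : \Pi_r(x) \in N\}$ is $\lambda^1\otimes\lambda^d$-null. For the first statement I set $N_s = \{x\in\RR^d : (x,s)\notin \Delta_f\}$; by Rademacher (and Lemma~\ref{nabmes}) each $N_s$ is $\lambda^d$-null since $f(\cdot,s)$ is locally Lipschitz. Applying the above null-set fact for each $s$ and then integrating in $s$ via Fubini yields the first claim. The second statement follows in exactly the same way, taking $N = \cD_F^c$, which is $\lambda^d$-null because $F$ is locally Lipschitz (a direct consequence of Assumption~\ref{hyp:model}, although actually the lemma is stated only under local Lipschitzness of the $f(\cdot,s)$, so I would note that $F$ is locally Lipschitz by dominated convergence using the local Lipschitz bounds).

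To prove the key null-set fact, fix a Lebesgue-null $N \subset \RR^d$ and split the set of interest into
$A_1 = \{(r,x) : \|x\|\le r,\ x\in N\}$ and $A_2 = \{(r,x): \|x\|>r,\ rx/\|x\|\in N\}$. The set $A_1$ has null measure by Fubini, since for each $r$ its $x$-section is contained in $N$. For $A_2$, I use the polar decomposition $\lambda^d(dx) = \rho^{d-1}\,d\rho\,\varrho_1(d\theta)$ from \eqref{eq:polar_rep}: setting $f(r) := \varrho_1(\{\theta\in \SSS(1): r\theta\in N\})$,
\[
0 = \lambda^d(N) = \int_0^\infty \rho^{d-1} f(\rho)\,d\rho,
\]
so $f(\rho)=0$ for $\lambda^1$-almost every $\rho>0$. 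Then
\[
\lambda^1\otimes\lambda^d(A_2)
= \int_0^\infty \Bigl(\int_r^\infty \rho^{d-1}\,d\rho\Bigr) f(r)\,dr,
\]
and although the inner $\rho$-integral is infinite, the integrand vanishes for $\lambda^1$-almost every $r$, so (with the convention $0\cdot\infty=0$) the whole integral is zero.

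The only real subtlety is this last $0\cdot\infty$ issue, which is why the statement is phrased for $r>0$ rather than all $r$: if one preferred to avoid the convention entirely, one can truncate and use $\sigma$-additivity, writing $A_2 = \bigcup_{n\in\NN} A_2 \cap\{\|x\|\le n\}$, on each of which the inner integral $\int_r^n \rho^{d-1}\,d\rho$ is finite, and concluding by monotone convergence. Beyond that, the remaining steps are routine measurability checks (the map $(r,x,s)\mapsto (\Pi_r(x),s)$ is jointly measurable, and $\Delta_f$ is measurable by Lemma~\ref{nabmes}, so the exceptional set is indeed measurable).
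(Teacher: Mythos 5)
Your proof is correct and follows essentially the same route as the paper's: both split according to whether $\|x\|\le r$ (where $\Pi_r$ is the identity and Fubini applies directly) or $\|x\|> r$ (where the polar decomposition \eqref{eq:polar_rep} shows that the spherical sections of the null set have $\varrho_1$-measure zero for almost every radius, which kills the integral). Your packaging of the argument as a single null-set fact applied to $N_s$ and to $\cD_F^c$, and your explicit truncation to handle the $0\cdot\infty$ issue, are presentational refinements of the same computation the paper carries out.
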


\begin{proof}

  Our first aim is to show that
  \begin{equation}\label{eq:proj_diff}
    \int \1_{\Delta_f^c}(\Pi_r(x),s) \,
   \lambda^1(\dif r) \otimes \lambda^d(\dif x) \otimes \mu(\dif s) = 0 \, .
  \end{equation}
  First, note by Fubini's theorem that
  \begin{equation}\label{eq:pol_coord_proj}
  0 = \int \1_{\Delta_f^c}(x,s) \, \lambda^d(\dif x)\otimes\mu(\dif s)
   = \int_{\Xi\times \RR_+} \int_{\SSS(1)}
   \1_{\Delta_f^c}(r \theta,s) r^{d-1} \varrho_1(\dif \theta) \
    \mu \otimes \lambda^1(\dif s \times \dif r) \, ,
  \end{equation}
that is to say, $\varrho(\{ \theta : (r \theta, s) \in \Delta_f\}) = 0$
for $\mu \otimes \lambda^1$ almost every $(s, r)$ with $r > 0$.
Decompose Equation~\eqref{eq:proj_diff} as
  \begin{multline*}
    \int \1_{\Delta_f^c}(\Pi_r(x), s) \
  \lambda^1(\dif r) \otimes \lambda^d(\dif x) \otimes \mu(\dif s) \\
 = \int  \1_{\norm{x} \geq r}\1_{\Delta_f^c}(\Pi_r(x), s)  \
  \lambda^1(\dif r) \otimes \lambda^d(\dif x) \otimes \mu(\dif s)
+  \int \1_{\norm{x} < r}\1_{\Delta_f^c}(x, s) \
  \lambda^1(\dif r) \otimes \lambda^d(\dif x) \otimes \mu(\dif s) .
  \end{multline*}
Since for each $s$, $f(\cdot, s)$ is differentiable almost everywhere, we
have by Fubini's theorem:
  \begin{equation*}
    \int \1_{\norm{x} < r}\1_{\Delta_f^c}(x, s) \
  \lambda^1(\dif r) \otimes \lambda^d(\dif x) \otimes \mu(\dif s) = 0 .
  \end{equation*}
  Similarly,
  \begin{align*}
    &  \int  \1_{\norm{x} \geq r}\1_{\Delta_f^c}(\Pi_r(x), s)  \
  \lambda^1(\dif r) \otimes \lambda^d(\dif x) \otimes \mu(\dif s) \\
  &=\int \1_{\norm{x} \geq r}\1_{\Delta_f^c}\Bigl(\frac{rx}{\norm{x}},s\Bigr)
  \ \lambda^1(\dif r) \otimes \lambda^d(\dif x) \otimes \mu(\dif s) \\
    &= \int_{\RR_+} \int_{\Xi\times\RR_+} \int_{\SSS(1)} \1_{r' \geq r}
    \1_{\Delta_f^c}(r' \theta,s)(r')^{d-1}
   \varrho(\dif \theta) \
        \mu \otimes \lambda^1(\dif s \times \dif r) \ \lambda^1(\dif r') \\
        &= 0 \, ,
  \end{align*}
with the last equality coming from Equation~\eqref{eq:pol_coord_proj}.
Hence~\eqref{eq:proj_diff}. The second statement can be proven along
similar lines.
\end{proof}

Consider $r> 0$ such that the conclusion of Lemma~\ref{lm:good_r} hold. Then the
almost sure equality of all projected SGD sequence is proven in the same way as
in Proposition~\ref{tout-se-vaut}. We can therefore consider the lazy projected
SGD sequence $x_{n+1}^{\gamma,r} = \Pi_r(x_n^{\gamma,r} - \gamma
\varphi_0(x_n^{\gamma,r}, \xi_{n+1}))$. By Assumption~\ref{hyp:proj_ker} the
law of $x_{n+1/2}^{\gamma,r} \eqdef x_n^{\gamma,r} - \gamma
\varphi_0(x_n^{\gamma,r}, \xi_{n+1})$ is Lebesgue-absolutely continuous. Take
$A$ a borel set of $\bbR^d$ such that $\lambda(A) = \varrho_r(A) = 0$. Then

\begin{equation*}
  \bbP(x_{n+1}^{\gamma,r} \in A) \leq \bbP(x_{n+1/2}^{\gamma,r} \in A) + \bbP\left( r \frac{x_{n+1/2}^{\gamma,r}}{\norm{x_{n+1/2}^{\gamma,r}}} \in A\right) \, .
\end{equation*}
The first term is equal to zero by Lebesgue-absolutely continuity of the law
of $x_{n+1/2}^{\gamma,r}$. For the second term we write:

\begin{equation*}
 \bbP\left( r \frac{x_{n+1/2}^{\gamma,r}}{\norm{x_{n+1/2}^{\gamma,r}}} \in
   A\right)  =
 \int (r')^{d-1}\1_A(r\theta) \varrho(\dif \theta) \lambda^1(\dif r')
  = \int (r')^{d-1}\varrho_r(A) \lambda^1(\dif r') = 0 \, ,
\end{equation*}
which finishes the proof.


\subsection{Proof of Theorems~\ref{th:rm_proj} and~\ref{th:apt_proj}}

Noting that the law of $x_n^{\gamma,r} - \gamma \varphi_0(x_n^{\gamma,r}, \xi_{n+1})$ is Lebesgue-absolutely continuous by Assumption~\ref{hyp:proj_ker}, the first point of Theorem~\ref{th:rm_proj} comes from Lemma~\ref{lm:good_r}. The second point comes upon noticing that $\Pi_r(x) - x \in - \cN_r(\Pi_r(x))$.

Theorem~\ref{th:apt_proj} is proved in the same way as Theorem~\ref{th:apt}, by applying \cite[Theorem 5.1.]{bia-hac-sal-stochastics19} with $h(s, x) = - \nabla F(x) - 1/\gamma (x - \gamma \nabla f(x,s) - \Pi_r(x - \gamma \nabla f(x,s))) \in - \nabla F(x) - \cN_r(x - \gamma \nabla f(x,s))$ and $H(x) = H(s,x) = - \partial F(x) - \cN_r(x)$.


\section*{Acknowledgements}
  The authors wish to thank J\'er\^ome Bolte and Edouard Pauwels for their
  inspiring remarks. This work is partially supported by the Région Ile-de-France.

\bibliographystyle{plain}
\bibliography{math}

\end{document}